\renewcommand{\email}[2][]{%
	\ifx\emails\@empty\relax\else{\g@addto@macro\emails{,\space}}\fi%
	\@ifnotempty{#1}{\g@addto@macro\emails{\textrm{(#1)}\space}}%
	\g@addto@macro\emails{#2}%
}
\newtheorem{thm}{Theorem}[section]
\newtheorem{lem}[thm]{Lemma}
\newtheorem{prop}[thm]{Proposition}
\theoremstyle{definition}
\theoremstyle{remark}
\DeclareMathOperator{\supp}{supp}
\newcommand{\loc}{\textup{loc}}
\newcommand{\glob}{\textup{glob}}
\newcommand{\HL}{\textup{HL}}
\newcommand{\RR}{\mathbb{R}}
\newcommand{\NN}{\mathbb{N}}
\newcommand{\CC}{\mathbb{C}}
\newcommand{\KK}{\mathbb{K}}
\newcommand{\TT}{\mathbb{T}}
\newcommand{\GG}{\mathbb{G}}
\newcommand{\blue}[1]{{\color{blue!80} #1}}
\numberwithin{equation}{section}
\setlist[enumerate,1]{label=(\alph*)}
\begin{document}

	\title[Variational inequalities for fractional Kolmogorov operators]
	{Variational inequalities associated with the semigroups generated by fractional Kolmogorov operators}

	\author[J. J. Betancor]{Jorge J. Betancor$^1$}
	\address{$^1$Departamento de An\'alisis Matem\'atico, Universidad de La Laguna,\newline
		Campus de Anchieta, Avda. Astrof\'isico S\'anchez, s/n,\newline
		38721 La Laguna (Sta. Cruz de Tenerife), Spain}
	\email{jbetanco@ull.es}
	
	\author[E. Dalmasso]{Estefan\'ia Dalmasso$^{2}$}
	\address{$^2$Instituto de Matem\'atica Aplicada del Litoral, UNL, CONICET, FIQ.\newline Colectora Ruta Nac. Nº 168, Paraje El Pozo,\newline S3007ABA, Santa Fe, Argentina}
	\email
	%[Corr. Author]
	{edalmasso@santafe-conicet.gov.ar}
	
	\author[P. Quijano]{Pablo Quijano$^{2}$}
	\email{pquijano@santafe-conicet.gov.ar}
	
	\date{\today}
	\subjclass{42B20, 42B25, 42B35, 47D03}
	
	\keywords{Kolmogorov operator, fractional Laplacian, $\rho$-variation, oscillation and jump operators}

	%%% ----------------------------------------------------------------------
	\begin{abstract}
		In this paper we consider fractional Kolmogorov operators defined, in $\RR^d$, by
		\[\Lambda_\kappa=(-\Delta)^{\alpha/2}+\frac{\kappa}{|x|^\alpha} x\cdot \nabla,\]
		with $\alpha\in (1,2)$, $\alpha<(d+2)/2$ and $\kappa\in \RR$. The operator $\Lambda_\alpha$ generates a holomorphic semigroup $\{T_t^\alpha\}_{t>0}$ in $L^2(\RR^d)$ provided that $\kappa<\kappa_c$ where $\kappa_c$ is a critical coupling constant. We establish $L^p$-boundedness properties for the variation operators $V_\rho\left(\{t^\ell\partial_t^\ell T_t^\alpha\}_{t>0}\right)$ with $\rho> 2$, $\ell\in \NN$ and $1\vee \frac{d}{\beta}<p<\infty$, where $\beta$ depends on $\kappa$. We also study the behavior of these variation operators in the endpoint $L^{1\vee \frac{d}{\beta}}(\RR^d)$ and we prove that $V_2(\{T_t^\alpha\}_{t>0})$ is not bounded from $L^p(\RR^d)$ to $L^{p,\infty}(\RR^d)$ for any $1< p<\infty$. 
	\end{abstract}
	%%% ----------------------------------------------------------------------
	\maketitle
	%%% ----------------------------------------------------------------------

	\section{Introduction}\label{sec: intro}
	
	We consider the fractional Kolmogorov operator formally given by
    \begin{equation*}
        \Lambda_\kappa^\alpha= (-\Delta)^{\alpha/2} + \frac{\kappa}{|x|^\alpha} x\cdot\nabla,\quad\text{ in } \RR^d,
    \end{equation*}
    for $d\in\NN$, $\alpha\in(1,2)$ with $\alpha<(d+2)/2$ and $\kappa\in\RR$. According to the terminology used
    in~\cite{KMS24} and~\cite{KS23} the gradient perturbation is called attractive when the coupling constant $\kappa$ is positive and repulsive when it is negative.

    Our objective in this paper is to establish $L^p$ variational inequalities involving derivatives of the heat semigroup of operators $\{T_t^\alpha\}_{t>0}$ generated by $-\Lambda_\kappa^\alpha$.

    The vector field $\kappa|x|^{-d}x$ has a stronger singularity than the ones of the Kato class. Then, the semigroup $\{T_t^\alpha\}_{t>0}$ is not $L^1-L^{\infty}$ ultracontractive, but it is ultracontractive as a mapping from $L^1(\RR^d,\varphi_s dx)$ into $L^\infty(\RR^d,dx)$ where 
    \[\varphi_s(y) = (s^{-1/\alpha}|y|\wedge 1)^{-d+\beta}, \quad 0<t\leq s.\] This fact is crucial in the proof of pointwise estimates for the integral kernel $T_t^\alpha(x,y)$ of $T_t^\alpha$ for $x,y\in\RR^d$ and $t>0$ (see~\cite{KSS21}). In contrast with other studies (see~\cite{ABR23, BDQ, BD23, BM, FMS21, KMV+18, MER21, Na23}, for instance), in this work we deal with nonlocal and nonsymmetric situation.

    We consider the function 
    \begin{equation}\label{eq: def-psi}
        \Psi(\beta) = \frac{2^\alpha}{\beta-\alpha} \frac{\Gamma\left(\frac{\beta}{2}\right)\Gamma\left(\frac{d}{2}-\frac{\beta-d}{2}\right)}{\Gamma\left(\frac{\alpha}{2}-\frac{\beta}{2}\right)\Gamma\left(\frac{\beta-d}{2}\right)}, \quad \beta\in(\alpha,d+\alpha).
    \end{equation}

    The fraction of the four gamma functions in~(\ref{eq: def-psi}) achieves its maximum value in the point $\beta = (\alpha + d)/2$ (see~\cite{FMS21}). The function $\Psi$ is not symmetric around the point $(\alpha + d)/2$ (see~\cite[p. 347]{KS23} and \cite[p. 1868]{KSS21} for plots). It was proved in~\cite[Lemma 1.1]{BDM} that $\Psi$ is strictly decreasing on the interval $((d+\alpha)/2,d+\alpha)$. 
    
    We denote by $\kappa_c = \Psi((d+\alpha)/2)$. The value $\kappa_c$ is critical in the following sense: for $\kappa<\kappa_c$ the Hardy drift is a Rellich perturbation of $(-\Delta)^{\alpha/2}$, i.e. 
    \begin{equation*}
        \left\| \frac{\kappa}{|x|^{-\alpha}} x\cdot \nabla (\xi + (-\Delta)^{\alpha/2})^{-1}\right\|_{L^{2}(\RR^d,dx)\rightarrow L^2(\RR^d,dx)}<1,\quad \kappa<\kappa_c \text{ and } \xi>0.
    \end{equation*}
    Then, $\{T_t^\alpha\}_{t>0}$ is a contractive and holomorphic semigroup in $L^2(\RR^d,dx)$ when $\kappa<\kappa_c$ (see~\cite[\S 7]{KS20}).

    Kinzebulatov, Semenov and Szczypkowski 
    \cite{KS20, KS23, KSS21} proved that $\{T_t^\alpha\}_{t>0}$ is contractive in $L^{\infty}(\RR^d,dx)$ and it extends by continuity to a $C_0$-semigroup in $L^p(\RR^d,dx)$ when $p\in [2,\infty)$, provided $\kappa<\kappa_c$. 

    Suppose that $\{S_t\}_{t>0}$ is a family of operators defined on $L^p(\RR^d,dx)$, for some $1\leq p<\infty$. Let $\rho>0$. The $\rho$-variation operator $V_\rho(\{S_t\}_{t>0})$ associated with $\{S_t\}_{t>0}$ is defined by 
    \begin{equation*}
        V_\rho(\{S_t\}_{t>0})(f)(x)  = \sup_{\substack{ 0<t_k<\dots<t_1<\infty \\ k\in\NN}}
        \left( \sum_{j=1}^{k-1} \left| S_{t_j}(f)(x) - S_{t_{j+1}}(f)(x)\right|^\rho \right)^{1/\rho}.
    \end{equation*}

    We are interested in $L^p$ boundedness properties of $\rho$-variation operators. We have to see first that 
    $V_\rho(\{S_t\}_{t>0})(f)$ defines a Lebesgue measurable function on $\RR^d$. In order to prove this, we will use some $t$-continuity properties of $\{S_t\}_{t>0}$ (see~\cite[p. 60]{CJRW1}). The $\rho$-variation operators are usually considered for $\rho > 2$ because $L^p$-boundedness has not been proven for $\rho = 2$ and, in some cases (see~\cite{CCSj} and~\cite{Qi}), it has even been disproved. 

    When the exponent $\rho = 2$, it is common to replace the variation operator $V_2$ with the oscillation operator, defined as follows. Suppose that $\{t_j\}_{j\in\NN}$ is a decreasing sequence of positive numbers such that $\lim_{j\rightarrow\infty} t_j = 0$. We define $O(\{S_t\}_{t>0},\{t_j\}_{j\in\NN})$ by
    \begin{equation*}
        O(\{S_t\}_{t>0},\{t_j\}_{j\in\NN})(f)(x) = 
        \left(
        \sum_{j\in\NN} \sup_{t_{j+1}\leq s_{j+1}<s_j\leq t_j} |S_{s_j}(f)(x) - S_{s_{j+1}}(f)(x)|^2
        \right)^{1/2}.
    \end{equation*}

    Lepingle~\cite{Lep} established variational inequalities for martingales in $L^2$. Lepingle's inequality was extended to $L^p$, $p>1$, in~\cite{JKRW}. Bourgain \cite{Bo1, Bo2, Bo3} was the first to prove variational inequalities in ergodic theory. After Bourgain's papers, variational inequalities appear in many others, some of them very recent ones, about probability, ergodic theory and harmonic analysis. In particular, variation operators associated with truncations of singular integrals and semigroups of operators have been established (see, for example, \cite{AB, CJRW1, CJRW2, CMMTV, JR, JW, LeMX1, MTX1, MTX2, OSTTW, ZK}). A result about the $\rho$-variation operator always gives information about convergence properties for the family of operators under consideration. Those readers interested in variational inequalities and their applications can find information in~\cite{BORSS, BMSW, 
    PDU, DL, GT, IMMS, JSW, KMT, MT, MS24, MSS, MST, MSZ, MTZ, Na23, PX}.

    We assume throughout this paper that $\kappa<\kappa_c$. As it was mentioned, $\{T_t^\alpha\}_{t>0}$ is a holomorphic and contractive semigroup in $L^2(\RR^d)$. Also, for every $t>0$, the operator $T_t^\alpha$ is positive (in the lattice sense). According to~\cite[Corollary~4.5, and p.~2092]{LeMX1} (see also \cite[p. 2075]{LeMX1}) we deduce the following result.

    \begin{thm}\label{thm: L2 var osc}
        Let $\alpha\in(1,2\wedge (d+2)/2)$, $\ell\in\NN\cup\{0\}$ and ${\rho>2}$. Suppose that $\{t_j\}_{j\in\NN}$ is a decreasing sequence of positive numbers such that $\lim_{j\rightarrow \infty}t_j = 0$. Then, the operators $V_\rho(\{t^\ell\partial^ \ell_t T_t^\alpha\}_{t>0})$ and $ O(\{t^\ell\partial^ \ell_t T_t^\alpha\}_{t>0},\{t_j\}_{j\in\NN})$ are bounded on $L^2(\RR^d)$.
    \end{thm}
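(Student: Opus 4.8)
The plan is to obtain Theorem~\ref{thm: L2 var osc} as a direct consequence of the abstract variational and oscillation inequalities for analytic semigroups due to Le Merdy and Xu in~\cite{LeMX1}. Recall that the main result there asserts that if $\{S_t\}_{t>0}$ is a bounded analytic semigroup on $L^q(\Omega,\mu)$, $1<q<\infty$, whose generator $-A$ is such that $A$ admits a bounded $H^\infty$ functional calculus of some angle strictly smaller than $\pi/2$, then for every $\rho>2$ and every $m\in\NN\cup\{0\}$ the operators $V_\rho(\{t^m\partial_t^m S_t\}_{t>0})$ and $O(\{t^m\partial_t^m S_t\}_{t>0},\{t_j\}_{j\in\NN})$ are bounded on $L^q(\Omega,\mu)$; on the Hilbert space $L^2$, and more generally for positive contraction analytic semigroups on $L^q$, the functional calculus hypothesis is automatically available, which is exactly the content of~\cite[Corollary~4.5 and p.~2092]{LeMX1} (see also~\cite[p.~2075]{LeMX1}).

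The first step is then to check that $\{T_t^\alpha\}_{t>0}$ meets these hypotheses on $L^2(\RR^d)$, and here everything needed has already been recorded in the Introduction: since $\kappa<\kappa_c$, the semigroup $\{T_t^\alpha\}_{t>0}$ is holomorphic and contractive on $L^2(\RR^d)$ by~\cite[\S 7]{KS20}, and for every $t>0$ the operator $T_t^\alpha$ is positive in the lattice sense. Equivalently, $\Lambda_\kappa^\alpha$ is an $m$-accretive, sectorial operator of some angle $\omega<\pi/2$ on the Hilbert space $L^2(\RR^d)$, which therefore carries the bounded $H^\infty$ functional calculus required by~\cite{LeMX1}. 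Taking into account that $t^\ell\partial_t^\ell T_t^\alpha=(-1)^\ell(t\Lambda_\kappa^\alpha)^\ell T_t^\alpha$, the cited theorem yields the $L^2(\RR^d)$-boundedness of $V_\rho(\{t^\ell\partial_t^\ell T_t^\alpha\}_{t>0})$ and of $O(\{t^\ell\partial_t^\ell T_t^\alpha\}_{t>0},\{t_j\}_{j\in\NN})$, which is the assertion of the theorem.

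Before invoking the abstract result one should make sure that $V_\rho(\{t^\ell\partial_t^\ell T_t^\alpha\}_{t>0})(f)$ and $O(\{t^\ell\partial_t^\ell T_t^\alpha\}_{t>0},\{t_j\}_{j\in\NN})(f)$ are Lebesgue measurable, which is also what makes the statement meaningful; for this I would exploit the $t$-regularity of the semigroup, namely that by analyticity the map $t\mapsto t^\ell\partial_t^\ell T_t^\alpha f$ is continuous from $(0,\infty)$ into $L^2(\RR^d)$ for each $f\in L^2(\RR^d)$, so that, after passing to an a.e.\ convergent subsequence, the suprema defining $V_\rho$ and $O$ may be restricted to a countable set of parameters (cf.~\cite[p.~60]{CJRW1}). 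I do not expect a genuine obstacle here: the two points that require any care are this measurability reduction and the identification of $\{T_t^\alpha\}_{t>0}$ with the precise class of semigroups handled in~\cite{LeMX1} — concretely, the observation that holomorphy together with contractivity on the Hilbert space $L^2(\RR^d)$ already supplies the bounded $H^\infty$ functional calculus driving the argument, so that positivity is not even essential at the $L^2$ level.
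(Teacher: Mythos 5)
Your argument is correct and follows the paper's route exactly: both proofs invoke \cite[Corollary~4.5 and p.~2092]{LeMX1}, using the facts recorded in the Introduction that for $\kappa<\kappa_c$ the semigroup $\{T_t^\alpha\}_{t>0}$ is holomorphic, contractive and positive on $L^2(\RR^d)$, together with the measurability reduction via the $t$-continuity of the semigroup as in \cite[p.~60]{CJRW1}. Your supplementary observation that positivity is not needed at the $L^2$ level is accurate --- an $m$-accretive generator of a bounded holomorphic semigroup on a Hilbert space automatically carries a bounded $H^\infty(\Sigma_\theta)$ calculus for some $\theta<\pi/2$ by combining the half-plane von Neumann inequality with McIntosh's angle-reduction theorem --- but the paper retains positivity so as to cite the same positive-contraction corollary uniformly across all $p\in(1,\infty)$.
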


    For every $\ell \in \mathbb{N} \cup \{0\}$, we denote by $T_{t,\ell}^\alpha(x,y)$, for $x,y \in \mathbb{R}^d$ and $t > 0$, the kernel of the integral operator $t^\ell \partial_t^\ell T_t^\alpha$. In order to extend the boundedness properties in Theorem~\ref{thm: L2 var osc} to $L^p(\RR^d)$ with $p\neq 2$, the following pointwise estimate for $T_{t,\ell}(x,y)$ established in~\cite[Proposition 2.9]{BDM} is a key result:
    \begin{equation*}
        \left|T_{t,\ell}^\alpha(x,y)\right| \leq \frac{C}{t^{d/\alpha}}
        \left( \frac{t^{1/\alpha}}{t^{1/\alpha}+|x-y|} \right)^{d+ \alpha - \epsilon} \left(
        1+ \frac{t^{1/\alpha}}{|y|} \right)^{d-\beta},
        \quad x,y\in\RR^d \text{ and } t>0.
    \end{equation*}
    The constant $C$ depends on $\ell\in\NN\cup\{0\}$ and $\epsilon>0$.

    \begin{thm}\label{thm: var osc strong-weak}
        Let $\alpha\in(1,2\wedge (d+2)/2)$, $\beta \in ((d+\alpha)/2,d+\alpha)$, ${\kappa = \Psi(\beta)}$, $\ell\in\NN\cup\{0\} $ and $\rho>2$. Suppose that $\{t_j\}_{j\in\NN}$ is a decreasing sequence of positive numbers such that $\lim_{j\rightarrow \infty}t_j = 0$. The operators $V_\rho(\{t^\ell\partial^ \ell_t T_t^\alpha\}_{t>0})$ and $ O(\{t^\ell\partial^ \ell_t T_t^\alpha\}_{t>0},\{t_j\}_{j\in\NN})$ are bounded
        \begin{enumerate}
            \item\label{itm: var osc strong} on $L^p(\RR^d)$ provided that $1\vee \frac{d}{\beta}<p<\infty$,
            \item  
            from $L^{1}(\RR^d)$ into $L^{1,\infty}(\RR^d)$ when $d\leq \beta$,
        \end{enumerate}
        Furthermore, $V_\rho(\{T_t^\alpha\}_{t>0})$ is not bounded from $L^{d/\beta}(\RR^d)$ into $L^{d/\beta,\infty}(\RR^d)$ when $d > \beta$.
    \end{thm}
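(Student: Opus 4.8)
The plan is to view $V_\rho(\{t^\ell\partial_t^\ell T_t^\alpha\}_{t>0})$ and $O(\{t^\ell\partial_t^\ell T_t^\alpha\}_{t>0},\{t_j\}_{j\in\NN})$ as (sub)linear operators of Calder\'on--Zygmund type whose ``kernel'' is the vector-valued family $\{T_{t,\ell}^\alpha(x,y)\}_{t>0}$, with values in the $\rho$-variation space $\mathcal V_\rho$ (resp.\ in $\ell^\infty(\NN;\ell^2)$ for the oscillation), and to run the vector-valued Calder\'on--Zygmund theory on top of the $L^2$-boundedness given by Theorem~\ref{thm: L2 var osc}. For this one has to check size and H\"ormander-type (in $x$) estimates for $\{T_{t,\ell}^\alpha(x,y)\}_{t>0}$ in the relevant norm; the size estimate follows from the pointwise bound quoted from \cite{BDM} together with the identity $t\partial_t T_{t,\ell}^\alpha=\ell\,T_{t,\ell}^\alpha+T_{t,\ell+1}^\alpha$, which lets one control the $t$-oscillation — and the square function appearing in the standard long/short decomposition of the $\rho$-variation (see \cite{CJRW1, LeMX1}) — by the same bound with $\ell+1$ in place of $\ell$; the smoothness in $x$ is obtained along the lines of the estimate in \cite{BDM}. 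The only obstruction to a direct argument is the factor $\big(1+t^{1/\alpha}/|y|\big)^{d-\beta}$, harmless when $d\le\beta$ but genuinely singular at $y=0$ when $d>\beta$.

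When $d\le\beta$ one has $1\vee\frac d\beta=1$ and the quoted estimate reads $|T_{t,\ell}^\alpha(x,y)|\le C\,t^{-d/\alpha}\big(t^{1/\alpha}/(t^{1/\alpha}+|x-y|)\big)^{d+\alpha-\epsilon}$, a standard fractional-heat-type kernel, so the vector-valued Calder\'on--Zygmund theory combined with Theorem~\ref{thm: L2 var osc} directly yields the weak $(1,1)$ estimate (item (b)) and the $L^p$-boundedness (item (a)) for all $1<p<\infty$. When $d>\beta$ one has $1<\frac d\beta<2$ (since $\beta>(d+\alpha)/2>d/2$); here I would split $T_t^\alpha=T_t^{\alpha,\glob}+T_t^{\alpha,\loc}$ by a smooth cut-off in $t^{1/\alpha}/|y|$, so that the kernel of $T_t^{\alpha,\loc}$ is supported where $|y|\lesssim t^{1/\alpha}$. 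On the support of the global kernel the singular factor is bounded, so $\{T_{t,\ell}^{\alpha,\glob}(x,y)\}_{t>0}$ is again a Calder\'on--Zygmund kernel; its $L^2$-boundedness follows from $V_\rho(\{t^\ell\partial_t^\ell T_t^{\alpha,\glob}\})\le V_\rho(\{t^\ell\partial_t^\ell T_t^{\alpha}\})+V_\rho(\{t^\ell\partial_t^\ell T_t^{\alpha,\loc}\})$ once the local part is handled on $L^2$, and Calder\'on--Zygmund theory then gives the global part on every $L^p$, $1<p<\infty$. For the local part the pointwise bound gives, on the support of its kernel, $|T_{t,\ell}^{\alpha,\loc}(x,y)|\le C\,t^{-d/\alpha}\big(t^{1/\alpha}/(t^{1/\alpha}+|x|)\big)^{d+\alpha-\epsilon}\big(t^{1/\alpha}/|y|\big)^{d-\beta}$, and likewise for $t\partial_t T_{t,\ell}^{\alpha,\loc}(x,y)$; reducing the $\rho$-variation (and the oscillation) to a maximal function plus a square function and using H\"older's inequality at the sharp threshold $\||y|^{-(d-\beta)}\|_{L^{p'}(|y|<R)}\lesssim R^{\beta-d/p}$, valid exactly when $p>d/\beta$, a direct computation should give $V_\rho(\{t^\ell\partial_t^\ell T_t^{\alpha,\loc}\})(f)(x)\le C_p\,\|f\|_{L^p(\RR^d)}\,|x|^{-d/p}$. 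Since $|x|^{-d/p}\in L^{p,\infty}(\RR^d)$, this is a weak $(p,p)$ bound for the local part, valid for every $p>d/\beta$ — in particular for $p=2$, which closes the loop above — and Marcinkiewicz interpolation between two such exponents upgrades it to strong $L^p$-boundedness of the local part for all $p\in(d/\beta,\infty)$. Adding the two parts proves item (a).

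For the negative statement I would test on $f_N=\sum_{k=1}^{N}2^{k\beta}\mathbf 1_{A_k}$ with $A_k=B(0,2^{-k})\setminus B(0,2^{-k-1})$, $N\in\NN$, for which $\|f_N\|_{L^{d/\beta}(\RR^d)}^{d/\beta}=\sum_{k=1}^N 2^{kd}|A_k|\sim N$, so $\|f_N\|_{L^{d/\beta}(\RR^d)}\sim N^{\beta/d}$. Using the sharp lower bound for the kernel near the origin — of the form $T_t^\alpha(x,y)\gtrsim t^{-d/\alpha}\big(t^{1/\alpha}/|y|\big)^{d-\beta}$ whenever $|y|\le t^{1/\alpha}$ and $|x-y|\lesssim t^{1/\alpha}$; see \cite{KSS21} — with $t^{1/\alpha}=\tfrac12$ gives $T_{2^{-\alpha}}^\alpha f_N(x)\gtrsim 2^d\sum_{k=1}^N 2^{k\beta}\int_{A_k}(2^{-1}/|y|)^{d-\beta}\,dy\sim N$ for $x\in B(0,\tfrac12)$. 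On the other hand $f_N\in L^2(\RR^d)$ and $\{T_t^\alpha\}$ is a $C_0$-semigroup on $L^2$, so $T_{t_j}^\alpha f_N\to f_N$ in $L^2$, hence a.e.\ along some sequence $t_j\downarrow0$; taking the times $t=2^{-\alpha}$ and $t=t_j$ in the definition of $V_\rho$ and letting $j\to\infty$ yields $V_\rho(\{T_t^\alpha\})(f_N)(x)\ge\big|T_{2^{-\alpha}}^\alpha f_N(x)-f_N(x)\big|\gtrsim N-2^\beta\gtrsim N$ for a.e.\ $x\in A_1$ (where $f_N\equiv2^\beta$) and $N$ large. Since $|A_1|$ is a fixed positive constant, $\|V_\rho(\{T_t^\alpha\})(f_N)\|_{L^{d/\beta,\infty}(\RR^d)}\gtrsim N$, while $\|f_N\|_{L^{d/\beta}(\RR^d)}\sim N^{\beta/d}$ with $\beta/d<1$; letting $N\to\infty$ contradicts any weak $(d/\beta,d/\beta)$ inequality for $V_\rho(\{T_t^\alpha\}_{t>0})$.

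The step I expect to be the main obstacle is the treatment of the local part when $d>\beta$: one must capture exactly the range $p>d/\beta$, which forces both the use of the precise exponent $d-\beta$ of the singular factor through H\"older's inequality at the critical integrability threshold, and the reduction of the $\rho$-variation and of the oscillation to scalar maximal and square functions to which that pointwise argument applies; a secondary technical point is establishing the H\"older smoothness in $x$ of $T_{t,\ell}^\alpha(x,y)$ (the companion of the bound quoted from \cite{BDM}) needed for the Calder\'on--Zygmund machinery. For the negative statement, the crux is having the sharp lower kernel bounds near the origin available, so that a single difference of the semigroup already forces an $L^{d/\beta,\infty}$-quasinorm of the wrong order.
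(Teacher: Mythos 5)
Your proposal takes a genuinely different route from the paper's, and it contains a real gap in the positive part, although the negative part is essentially the paper's argument in a slightly different packaging.

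For the positive statements (a) and (b), the paper never runs Calder\'on--Zygmund theory on the kernel $T_{t,\ell}^\alpha(x,y)$ itself. Instead it writes $T_t^\alpha=W_t^\alpha+(T_t^\alpha-W_t^\alpha)$, treats $\{W_t^\alpha\}$ (a convolution semigroup, for which the gradient estimate of \cite[Lemma~2.11]{BDM} is available) by vector-valued Calder\'on--Zygmund theory and Le~Merdy--Xu, and handles the difference via Duhamel's formula and a Schur test using only \emph{size} bounds. This is precisely designed to sidestep the point you call ``a secondary technical point,'' namely a H\"ormander/gradient estimate for $T_t^\alpha(x,y)$. No such estimate is cited in the paper, and the only gradient bound quoted from \cite{BDM} concerns $W_t^\alpha$, not $T_t^\alpha$. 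Without an $x$- or $y$-smoothness estimate for $T_{t,\ell}^\alpha(\cdot,\cdot)$ (and the same issue persists after your smooth cut-off in $t^{1/\alpha}/|y|$, since it does not regularize the $x$-dependence), neither the weak $(1,1)$ bound for $d\le\beta$ nor the $L^p$-bound for the global part when $d>\beta$ can be obtained from the vector-valued CZ machinery. This is the main missing ingredient; your appeal to ``the smoothness in $x$ obtained along the lines of the estimate in \cite{BDM}'' has no backing. (Your local-part estimate $V_\rho(\{t^\ell\partial_t^\ell T_t^{\alpha,\loc}\})(f)(x)\lesssim\|f\|_{L^p}|x|^{-d/p}$ for $p>d/\beta$, via \eqref{eq: ineq var-deriv} and H\"older at the critical integrability threshold, does check out as a computation; it is the global piece, and the case $d\le\beta$, that remain open in your scheme.)

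For the negative statement, you are essentially doing what the paper does. The paper passes through the maximal operator $T_*^\alpha$, bounded by $V_\rho(\{T_t^\alpha\})+|f|$ via the $C_0$-semigroup property, and then disproves the weak bound for $T_*^\alpha$ using the kernel lower bound near the origin and test functions $g_n$ with $\|g_n\|_{L^{d/\beta}}\le 1$ and $\int_{B(0,1)}|y|^{\beta-d}g_n(y)\,dy>n$. You instead bound $V_\rho$ below directly by $|T_{2^{-\alpha}}^\alpha f_N-f_N|$ (again via the $C_0$-property along a sequence $t_j\to0$) and use the dyadic test functions $f_N=\sum_{k\le N}2^{k\beta}\mathbf 1_{A_k}$. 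Both arguments use the same kernel lower bound and the same two ideas; the difference is cosmetic, and your computation of the norms is correct.
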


 We denote by $\{W_t\}_{t>0}$ the classical heat semigroup associated with the Euclidean Laplacian $-\Delta$. Let $\ell\in \NN\cup\{0\}$, $\rho>2$ and $\{t_j\}_{j\in\NN}$ be a decreasing sequence of positive numbers such that $\lim_{j\rightarrow \infty} t_j = 0$. In~\cite[p.~12]{AB} it was established that the operators $V_{\rho}(\{t^\ell\partial^\ell_t W_t\}_{t>0})$ and $O(\{t^\ell\partial^\ell_t W_t\}_{t>0}, \{t_j\}_{j\in\NN})$ are bounded on $L^p(\RR^d)$ for every $1<p<\infty$, and from $L^1(\RR^d)$ into $L^{1,\infty}(\RR^d)$. 

 Given $\alpha\in (0,2)$, we represent by $\{W_t^\alpha\}_{t>0}$ the heat semigroup associated with the fractional power $(-\Delta)^{\alpha/2}$ defined by the subordination formula in the following way
 \begin{equation*}
     W_t^\alpha(f) = \int_0^\infty \eta_{\alpha/2,t}(s)W_s(f)ds,
 \end{equation*}
where, for every $\theta\in (0,1)$,
\begin{equation*}
    \eta_{\theta,t}(s) = \frac{1}{2\pi i} \int_{\sigma-i\infty}^{\sigma+i\infty} e^{zs-tz^{\theta}}dz, \quad t,s>0,
\end{equation*}
and the last integral does not depend on the choice of $\sigma>0$ (see~\cite[p. 259]{Yos}). By using Corollary~4.5 and Corollary~6.2 in~\cite{LeMX1} (see also~\cite[p. 3]{BDQ}) we can see that the operators $V_{\rho}(\{t^\ell\partial^\ell_t W_t^\alpha\}_{t>0})$ and $O(\{t^\ell\partial^\ell_t W_t^\alpha\}_{t>0}, \{t_j\}_{j\in\NN})$ are bounded on $L^p(\RR^d)$ for every $1<p<\infty$. We will prove in Proposition~\ref{prop: weak type 1-1} (Section~\ref{sec: weak type var osc}) that these operators are also bounded from $L^1(\RR^d)$ into $L^{1,\infty}(\RR^d)$.

In order to prove the positive results of Theorem~\ref{thm: var osc strong-weak} for $\rho$-variation operators, we will show that the operators $V_{\rho}(\{t^\ell \partial_t^\ell (T_t^\alpha - W_t^\alpha)\}_{t>0})$, for $\rho > 2$ (actually for $\rho\ge 1$) are bounded in $L^p(\RR^d)$, when $1\vee \frac{d}{\beta}<p<\infty$ (Proposition \ref{prop: difference} in Section~\ref{sec: difference}  and from $L^1(\mathbb{R}^d)$ into $L^{1,\infty}(\mathbb{R}^d)$ in Section~\ref{sec: endpoint var}, Proposition \ref{prop: weak difference}). The negative result in Theorem \ref{thm: var osc strong-weak} is also proved in Section~\ref{sec: endpoint var}. The properties in Theorem \ref{thm: var osc strong-weak} for the oscillation operators can be proved as those for $\rho$-variation operators (see Section~\ref{sec: endpoint osc}).

Suppose that $\{S_t\}_{t>0}$ is a family of operators defined in $L^p(\RR^d)$ for some $1\le p<\infty$ and $\lambda>0$. The $\lambda$-jump operator $J(\{S_t\}_{t>0},\lambda)$ is given by
$$
J(\{S_t\}_{t>0},\lambda)(f)(x)= \sup \,\mathcal{G}(\{S_t\}_{t>0},\lambda)(f)(x),
$$
where, for every $x\in \RR^d$, $\mathcal{G}(\{S_t\}_{t>0},\lambda)(f)(x)$ consists of all those $n\in \NN$ such that there exist $0<s_1<t_1\le s_2<t_2\le \ldots \le s_n<t_n$ satisfying that $|S_{t_i}(f)(x)-S_{s_i}(f)(x)|>\lambda$, $i=1,\dots, n$.

In the following result we establish $L^p$-boundedness properties for $J(\{t^\ell\partial_t^\ell T_t^\alpha\}_{t>0},\lambda)$, $\ell\in \NN\cup \{0\}$, which will be proved in Section~\ref{sec: jump}.

\begin{thm}\label{thm: jump strong-weak}
        Let $\alpha\in(1,2\wedge (d+2)/2)$, $\beta \in ((d+\alpha)/2,d+\alpha)$, ${\kappa = \Psi(\beta)}$, $\ell\in\NN\cup\{0\}$ and $\rho> 2$. The operator $\lambda[J(\{t^\ell \partial_t^\ell T_t^\alpha\}_{t>0},\lambda)]^{1/\rho}$ and $\lambda [J(\{T_t^\alpha\}_{t>0},\lambda)]^{1/2}$ are bounded in $L^p(\RR^d)$, uniformly in $\lambda>0$ for every $1\vee \frac{d}{\beta}<p<\infty$. Furthermore, $\lambda [J(\{t^\ell \partial_t^\ell T_t^\alpha\}_{t>0},\lambda)]^{1/\rho}$ is bounded from $L^1(\RR^d)$ into $L^{1,\infty}(\RR^d)$, uniformly in $\lambda>0$, when $d\le \beta$.
\end{thm}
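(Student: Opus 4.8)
The plan is to derive both parts of the theorem from the variation and oscillation estimates already established in the paper, together with the elementary comparison between the jump counting function and the $\rho$-variation. The basic observation is that for any family $\{S_t\}_{t>0}$, any $f$, any $x\in\RR^d$, any $\lambda>0$ and any $\rho\ge 1$ one has the pointwise bound $\lambda\,[J(\{S_t\}_{t>0},\lambda)(f)(x)]^{1/\rho}\le V_\rho(\{S_t\}_{t>0})(f)(x)$: indeed, if $n=J(\{S_t\}_{t>0},\lambda)(f)(x)\in\NN$ then there are $0<s_1<t_1\le\cdots\le s_n<t_n$ with $|S_{t_i}(f)(x)-S_{s_i}(f)(x)|>\lambda$, $i=1,\dots,n$, and feeding the points $s_1,t_1,\dots,s_n,t_n$ into the supremum defining $V_\rho$ yields $V_\rho(\{S_t\}_{t>0})(f)(x)^\rho\ge\sum_{i=1}^{n}|S_{t_i}(f)(x)-S_{s_i}(f)(x)|^\rho>n\lambda^\rho$. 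Taking $S_t=t^\ell\partial_t^\ell T_t^\alpha$ and invoking Theorem~\ref{thm: var osc strong-weak} (its $L^p$ part for $1\vee\frac d\beta<p<\infty$, and its weak type $(1,1)$ part when $d\le\beta$) immediately gives the asserted bounds for $\lambda\,[J(\{t^\ell\partial_t^\ell T_t^\alpha\}_{t>0},\lambda)]^{1/\rho}$, uniformly in $\lambda>0$. The measurability of the jump function is obtained from the $t$-continuity of $t\mapsto t^\ell\partial_t^\ell T_t^\alpha f$, exactly as for the variation operators.

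For the operator $\lambda\,[J(\{T_t^\alpha\}_{t>0},\lambda)]^{1/2}$ the exponent is $1/2$, and since $V_2(\{T_t^\alpha\}_{t>0})$ is unbounded the argument above is of no help. Instead we split $T_t^\alpha=W_t^\alpha+R_t^\alpha$ with $R_t^\alpha:=T_t^\alpha-W_t^\alpha$ and use the subadditivity of the jump count: if $|(A_{t_i}+B_{t_i})(f)(x)-(A_{s_i}+B_{s_i})(f)(x)|>\lambda$ then $|A_{t_i}(f)(x)-A_{s_i}(f)(x)|>\lambda/2$ or $|B_{t_i}(f)(x)-B_{s_i}(f)(x)|>\lambda/2$, and, splitting the index set according to which alternative occurs, one obtains the pointwise bound $J(\{A_t+B_t\}_{t>0},\lambda)\le J(\{A_t\}_{t>0},\lambda/2)+J(\{B_t\}_{t>0},\lambda/2)$, hence
\[
\lambda\,[J(\{T_t^\alpha\}_{t>0},\lambda)(f)]^{1/2}\le 2\Big(\tfrac\lambda2[J(\{W_t^\alpha\}_{t>0},\tfrac\lambda2)(f)]^{1/2}+\tfrac\lambda2[J(\{R_t^\alpha\}_{t>0},\tfrac\lambda2)(f)]^{1/2}\Big).
\]
Thus it suffices to bound each operator on the right on $L^p(\RR^d)$, $1\vee\frac d\beta<p<\infty$, uniformly in $\lambda>0$. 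For the remainder $\{R_t^\alpha\}_{t>0}$ we apply the first-paragraph inequality with $\rho=1$: since $J\ge 1$ wherever it does not vanish we have $[J]^{1/2}\le[J]$, so $\lambda\,[J(\{R_t^\alpha\}_{t>0},\lambda)(f)]^{1/2}\le V_1(\{R_t^\alpha\}_{t>0})(f)$, and by Proposition~\ref{prop: difference} (with $\ell=0$ and $\rho=1$) this is bounded on $L^p(\RR^d)$ for $1\vee\frac d\beta<p<\infty$.

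It therefore remains to prove the jump inequality $\lambda\,[J(\{W_t^\alpha\}_{t>0},\lambda)]^{1/2}$ bounded on $L^p(\RR^d)$, $1<p<\infty$, uniformly in $\lambda>0$, for the fractional heat semigroup. We would obtain it through the Jones--Seeger--Wright reduction of the jump count at the dyadic scales $\{2^j\}_{j\in\mathbb Z}$ (see \cite{JSW}): this dominates $\lambda\,[J(\{W_t^\alpha\}_{t>0},\lambda)(f)]^{1/2}$ by the sum of a dyadic oscillation operator of $\{W_t^\alpha\}_{t>0}$ and the short $2$-variation square function $\big(\sum_{j\in\mathbb Z}V_2(\{W_t^\alpha:2^j\le t\le 2^{j+1}\})(f)^2\big)^{1/2}$. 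The oscillation part is bounded on $L^p(\RR^d)$, $1<p<\infty$, by \cite[Corollaries 4.5 and 6.2]{LeMX1}, as recalled in the Introduction. For the short square function, inside each dyadic block $V_2(\{W_t^\alpha\})(f)(x)\le V_1(\{W_t^\alpha\})(f)(x)=\int_{2^j}^{2^{j+1}}|\partial_s W_s^\alpha f(x)|\,ds\lesssim\big(\int_{2^j}^{2^{j+1}}|s\,\partial_s W_s^\alpha f(x)|^2\,\tfrac{ds}{s}\big)^{1/2}$ by Cauchy--Schwarz, so the square function is dominated by the Littlewood--Paley $g$-function $g(f)(x)=\big(\int_0^\infty|s\,\partial_s W_s^\alpha f(x)|^2\,\tfrac{ds}{s}\big)^{1/2}$ of $\{W_s^\alpha\}_{s>0}$, which is bounded on $L^p(\RR^d)$, $1<p<\infty$, by the classical Littlewood--Paley theory of the fractional heat semigroup; see also \cite{BDQ}.

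The main obstacle is this last step --- the jump inequality with the $1/2$ power for $\{W_t^\alpha\}_{t>0}$; everything else is bookkeeping built on the jump--variation comparison, the subadditivity of the jump count, and Proposition~\ref{prop: difference}. Two minor points require care: uniformity in $\lambda$ must be tracked through the subadditivity step, which is automatic because $\tfrac\lambda2[J(\cdot,\tfrac\lambda2)]^{1/2}$ is again an instance of the same operator family; and the dyadic oscillation over $j\in\mathbb Z$ appearing in the Jones--Seeger--Wright reduction must be matched with the oscillation operator $O(\{S_t\}_{t>0},\{t_j\}_{j\in\NN})$ of the paper, which uses sequences decreasing to $0$. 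For $\{W_t^\alpha\}_{t>0}$ the latter is harmless since $W_t^\alpha f\to 0$ as $t\to\infty$, so the large-scale part of the oscillation is absorbed by the $L^p$-boundedness of $\sup_{t>0}|W_t^\alpha f|$. Finally, the measurability of $J(\{T_t^\alpha\}_{t>0},\lambda)(f)$ follows once more from the $t$-continuity of $t\mapsto T_t^\alpha f$.
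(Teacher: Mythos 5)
Your first paragraph matches the paper exactly: the paper invokes the same pointwise comparison $\lambda\,[J(\{S_t\}_{t>0},\lambda)(f)]^{1/\rho}\lesssim V_\rho(\{S_t\}_{t>0})(f)$ (citing~\cite[p.~6712]{JSW} for the constant $2^{1+1/\rho}$, whereas your elementary argument actually gives a sharper constant), and then applies Theorem~\ref{thm: var osc strong-weak}. Your second paragraph -- splitting $T_t^\alpha=W_t^\alpha+(T_t^\alpha-W_t^\alpha)$, the subadditivity $J(\{A_t+B_t\},\lambda)\le J(\{A_t\},\lambda/2)+J(\{B_t\},\lambda/2)$, and controlling the remainder via Proposition~\ref{prop: difference} -- is also the paper's argument (the paper uses $V_2$ of the remainder where you use $V_1$; both are covered by Proposition~\ref{prop: difference}, which holds for all $\rho\ge1$).

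The genuine gap is in your treatment of $\lambda\,[J(\{W_t^\alpha\}_{t>0},\lambda)]^{1/2}$, which you yourself flag as ``the main obstacle''. The paper handles this in one line: since $\{W_t^\alpha\}_{t>0}$ is subordinated to the classical heat semigroup, it is a symmetric diffusion semigroup in the sense of Stein~\cite{StLP}, and \cite[(3)]{MSZ} then yields the $L^p$-boundedness of $\lambda\,[J(\{W_t^\alpha\}_{t>0},\lambda)]^{1/2}$ for $1<p<\infty$, uniformly in $\lambda>0$. Your attempted JSW reduction has two problems. First, the decomposition produces a ``long-jump'' term $\lambda\,[J(\{W_{2^j}^\alpha\}_{j\in\mathbb{Z}},\lambda)]^{1/2}$ indexed over all $j\in\mathbb{Z}$; identifying this with the paper's oscillation operator $O(\{S_t\}_{t>0},\{t_j\}_{j\in\NN})$, which runs only over sequences decreasing to $0$, is not a bookkeeping step but a real mismatch. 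Second, your claim that the large-scale contribution ($j\to-\infty$, i.e.\ $t\to\infty$) is ``absorbed by the $L^p$-boundedness of $\sup_{t>0}|W_t^\alpha f|$'' is not correct: a maximal bound controls a single supremum, not a square-summable or variational quantity, and the mere pointwise convergence $W_t^\alpha f\to 0$ does not give the quantitative decay needed to sum the squares of the dyadic oscillations over $j\le 0$. You would need either a genuine Lépingle-type inequality for the discrete dyadic sequence (via martingale transference / Rota dilation for symmetric diffusion semigroups) or to cite the semigroup jump inequality directly as the paper does. Replacing this part with the citation to \cite[(3)]{MSZ} closes the gap and leaves the rest of your argument sound.
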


 Our next objective is to prove weighted $L^p$-inequalities for $\rho$-variation and oscillation operators involving $\{t^\ell \partial^\ell_t T_t^\alpha\}_{t>0}$, $\ell\in\NN\cup\{0\}$. Before stating our results we recall some definitions. A weight in $\RR^d$ is a nonnegative measurable function on $\RR^d$. If $1<p<\infty$ we say that a weight $w$ is in the Muckenhoupt class $A_p(\RR^d)$ when
 \begin{equation*}
     \sup_B \left( \frac{1}{|B|} \int_B w\right) 
     \left( \frac{1}{|B|} \int_B w^{-1/(p-1)} \right)^{p-1} <\infty,
 \end{equation*}
 where the supremum is taken over all balls $B\subset\RR^d$.

Hereafter we denote by $L^p(\RR^d,w)$ the $p$-Lebesgue space with weight $w$. 

\begin{thm}\label{thm: weighted Lp}
Let $\alpha\in (1,2\wedge (d+2)/2)$, $\beta\in ((d+\alpha)/2,d+\alpha)$, $\kappa=\Psi(\beta)$, $\ell\in \NN\cup\{0\}$ and $\rho>2$. Suppose that $\{t_j\}_{j\in\NN}$ is a decreasing sequence of positive numbers such that $\lim_{j\rightarrow \infty}t_j = 0$. The operators $V_\rho(\{t^\ell\partial^ \ell_t T_t^\alpha\}_{t>0})$ and $ O(\{t^\ell\partial^ \ell_t T_t^\alpha\}_{t>0},\{t_j\}_{j\in\NN})$ are bounded on $L^p(\RR^d,w)$ for $1\vee d/\beta <p<\infty$ and any weight $w\in A_{p/(1\vee d/\beta)}(\mathbb{R}^d)$. Furthermore, the operator $\lambda \left[J(\{t^\ell\partial _t^\ell T_t^\alpha\}_{t>0},\lambda)\right]^{1/\rho}$ is bounded in $L^p(\RR^d,w)$, uniformly in $\lambda>0$, for $1\vee d/\beta <p<\infty$ and any weight $w\in A_{p/(1\vee d/\beta)}(\mathbb{R}^d)$.
\end{thm}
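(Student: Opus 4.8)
The plan is to freeze $t>0$, write $T_t^\alpha=W_t^\alpha+(T_t^\alpha-W_t^\alpha)$, and treat the two families separately, using that $V_\rho(\{\,\cdot\,\}_{t>0})$ and $O(\{\,\cdot\,\}_{t>0},\{t_j\}_{j\in\NN})$ are subadditive in the family and that, for the jump functional, $\lambda[J(\{A_t+B_t\}_{t>0},\lambda)]^{1/\rho}\le C\bigl(\lambda[J(\{A_t\}_{t>0},\lambda/2)]^{1/\rho}+\lambda[J(\{B_t\}_{t>0},\lambda/2)]^{1/\rho}\bigr)$, with $C$ independent of $\lambda$. Two elementary reductions will be used throughout: (i) $n$ pairwise disjoint $\lambda$-jumps of a family $\{S_t\}_{t>0}$ at $x$ yield a partition witnessing $V_\rho(\{S_t\}_{t>0})(f)(x)\ge n^{1/\rho}\lambda$, so that $\lambda[J(\{S_t\}_{t>0},\lambda)]^{1/\rho}\le V_\rho(\{S_t\}_{t>0})$ pointwise and the jump estimates follow from those for $V_\rho$; and (ii) for a $t$-differentiable family, $V_\rho(\{S_t\}_{t>0})(f)(x)\le\int_0^\infty|\partial_t S_t(f)(x)|\,dt$ for every $\rho\ge1$, and since in each block $[t_{j+1},t_j]$ the oscillation is at most $\int_{t_{j+1}}^{t_j}|\partial_t S_t(f)(x)|\,dt$, the same upper bound $\int_0^\infty|\partial_t S_t(f)(x)|\,dt$ holds for $O(\{S_t\}_{t>0},\{t_j\}_{j\in\NN})(f)(x)$. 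Hence it suffices to bound $V_\rho$ and $O$ for $\{t^\ell\partial_t^\ell W_t^\alpha\}_{t>0}$ on $L^p(\RR^d,w)$, and to bound $\int_0^\infty|\partial_t(t^\ell\partial_t^\ell(T_t^\alpha-W_t^\alpha))(f)(x)|\,dt$ pointwise.

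For the fractional heat semigroup, $V_\rho(\{t^\ell\partial_t^\ell W_t^\alpha\}_{t>0})$ and $O(\{t^\ell\partial_t^\ell W_t^\alpha\}_{t>0},\{t_j\}_{j\in\NN})$ are vector-valued Calder\'on--Zygmund operators: they act as $f\mapsto\int_{\RR^d}W_{t,\ell}^\alpha(x,y)f(y)\,dy$ with $x\mapsto W_{t,\ell}^\alpha(x,\cdot)$ taking values in the variation space $\mathcal{V}_\rho$ (resp.\ in the mixed-norm space attached to $O$), the $L^2$-boundedness following from Corollaries~4.5 and~6.2 of \cite{LeMX1} (see also \cite[p.~12]{AB} and \cite[p.~3]{BDQ}; here the restriction $\rho>2$ enters) and the size and H\"ormander smoothness conditions for the $\mathcal{V}_\rho$-valued kernel following from the standard pointwise bounds on $W_{t,\ell}^\alpha(x,y)$ and its first-order $x$- and $y$-derivatives. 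By the weighted Calder\'on--Zygmund theory these operators are bounded on $L^p(\RR^d,w)$ for all $1<p<\infty$ and all $w\in A_p(\RR^d)$; since $p/(1\vee d/\beta)\le p$ gives $A_{p/(1\vee d/\beta)}(\RR^d)\subseteq A_p(\RR^d)$, this covers the whole range of the theorem, and the corresponding jump estimate follows from reduction (i).

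For the difference, I would revisit the proof of Proposition~\ref{prop: difference}: combining reduction (ii) with the pointwise kernel estimates for $\partial_t(t^\ell\partial_t^\ell(T_t^\alpha-W_t^\alpha))$ used there --- which exhibit the gain carried by the factor $(1+t^{1/\alpha}/|y|)^{d-\beta}$ of \cite[Proposition~2.9]{BDM} away from the diagonal, and the bound $(t^{1/\alpha}/|y|)^{d-\beta}$ in the region $|y|\lesssim t^{1/\alpha}$ --- that argument in fact produces the pointwise domination
\[
V_\rho(\{t^\ell\partial_t^\ell(T_t^\alpha-W_t^\alpha)\}_{t>0})(f)(x)\le C\,\bigl(M(|f|^{1\vee d/\beta})(x)\bigr)^{1/(1\vee d/\beta)},\qquad x\in\RR^d,
\]
where $M$ denotes the Hardy--Littlewood maximal operator; by reductions (i) and (ii) the same majorant bounds the oscillation and jump operators associated with $\{t^\ell\partial_t^\ell(T_t^\alpha-W_t^\alpha)\}_{t>0}$. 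Since $f\mapsto(M(|f|^q))^{1/q}$ is bounded on $L^p(\RR^d,w)$ whenever $p>q$ and $w\in A_{p/q}(\RR^d)$, taking $q=1\vee d/\beta$ yields the difference piece on $L^p(\RR^d,w)$ for $1\vee d/\beta<p<\infty$ and $w\in A_{p/(1\vee d/\beta)}(\RR^d)$. Collecting the two pieces, together with the quasi-subadditivity of the jump functional recalled above, proves all three assertions.

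I expect the main obstacle to be the second step: verifying that the $\mathcal{V}_\rho$-valued (and oscillation-valued) kernels of $\{t^\ell\partial_t^\ell W_t^\alpha\}_{t>0}$ satisfy the H\"ormander-type regularity needed to run the weighted Calder\'on--Zygmund machinery, i.e.\ controlling the variation in $t$ of $W_{t,\ell}^\alpha(x,y)-W_{t,\ell}^\alpha(x',y)$ (and of the transposed difference in $y$) with the correct off-diagonal decay; this requires the smoothness estimates for the fractional heat kernel and its $t$-derivatives that underlie \cite{LeMX1, AB, BDQ}, not merely the diagonal size bounds. By contrast, the difference estimate is a bookkeeping refinement of Proposition~\ref{prop: difference} --- recording the maximal-function majorant rather than only its $L^p$-norm --- and the passage to $A_{p/(1\vee d/\beta)}$ weights is then automatic.
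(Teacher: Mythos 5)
Your strategy is to split $T_t^\alpha=W_t^\alpha+(T_t^\alpha-W_t^\alpha)$, treat $V_\rho(\{t^\ell\partial_t^\ell W_t^\alpha\}_{t>0})$ via vector-valued Calder\'on--Zygmund theory (giving $L^p(w)$ for all $w\in A_p$), and control the difference piece by a pointwise maximal-function majorant $\bigl(M(|f|^{1\vee d/\beta})\bigr)^{1/(1\vee d/\beta)}$. The first piece is fine and consistent with Proposition~\ref{prop: weak type 1-1}. The second piece, however, contains a claim that cannot hold in the regime $d>\beta$, which is precisely the nontrivial case. Indeed, the map $f\mapsto\bigl(M(|f|^{q})\bigr)^{1/q}$ with $q=d/\beta$ is bounded from $L^{d/\beta}(\RR^d)$ into $L^{d/\beta,\infty}(\RR^d)$, and $V_\rho(\{W_t^\alpha\}_{t>0})$ is bounded on $L^{d/\beta}(\RR^d)$ (it is weak-$(1,1)$ and $L^2$-bounded, and $1<d/\beta<2$ since $\beta>(d+\alpha)/2$). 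Hence your claimed pointwise domination of $V_\rho(\{T_t^\alpha-W_t^\alpha\}_{t>0})(f)$ would force $V_\rho(\{T_t^\alpha\}_{t>0})$ to be bounded from $L^{d/\beta}(\RR^d)$ into $L^{d/\beta,\infty}(\RR^d)$, which is exactly what the last assertion of Theorem~\ref{thm: var osc strong-weak} shows to be \emph{false} when $d>\beta$. So the ``bookkeeping refinement'' of Proposition~\ref{prop: difference} you describe does not exist; the Schur-test argument there only yields strong $L^p$ bounds for $p>1\vee d/\beta$, not a pointwise bound by a single $L^q$-maximal function, and any such bound is ruled out by the negative endpoint result.

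The paper's proof avoids this entirely by not splitting off $W_t^\alpha$ at all. It works directly with $V_\rho(\{t^\ell\partial_t^\ell T_t^\alpha\}_{t>0})$ and applies the abstract weighted extrapolation theorem of Bernicot--Zhao \cite[Theorem~6.6]{BZ}, with the approximation to the identity $A_t=(I-T_t^\alpha)^m$ built from the perturbed semigroup itself. The substance is Lemma~\ref{lem: aux weighted}: one proves, for balls $B$ and annuli $S_j(B)$, exponential off-diagonal decay both for $(I-A_{r_B^\alpha})f=\sum_k\binom{m}{k}T_{kr_B^\alpha}^\alpha f$ (using the kernel bounds of \cite[Proposition~2.9]{BDM}) and for $V_\rho(\{t^\ell\partial_t^\ell T_t^\alpha\}_{t>0})(A_{r_B^\alpha}f)$, with the latter requiring the cancellation from $(I-T_t^\alpha)^m$, $m>\beta/\alpha$, to handle the integral $\int_0^\infty t^{\ell-1}|\partial_t^\ell T_t^\alpha(A_{r_B^\alpha}f)|\,dt$; one then checks that the resulting $(p_0,q_0)$ ranges translate into exactly the class $A_{p/(1\vee d/\beta)}(\RR^d)$. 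Your reduction of the oscillation and jump operators to the $\rho$-variation (your reductions (i) and (ii)) mirrors the paper's use of \eqref{eq: jump<var}, but the weighted input you would feed them must come from a good-$\lambda$/sparse-type argument of this kind, not from a maximal-function majorant.
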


This theorem will be proved in Section~\ref{sec: weighted} by using \cite[Theorem~6.6]{BZ}.

As it was mentioned, the $2$-variation operator may behave differently from the $\rho$-variation operators in $L^p$ spaces when $\rho>2$ (see~\cite{CCSj} and~\cite{Qi}). The condition $\rho>2$ is necessary for the $\rho$-variation operators $V_\rho(\{T_t^\alpha\}_{t>0})$ and $V_\rho(\{W_t^\alpha\}_{t>0})$ to be bounded from $L^p(\mathbb{R}^d)$ into $L^{p,\infty}(\mathbb{R}^d)$ for $1<p<\infty$.

\begin{thm}\label{thm: necessity rho>2}
 The operators
    \begin{enumerate}
        \item $V_2(\{W_t^\alpha\}_{t>0})$ with $0<\alpha<2$ and $1<p<\infty$;
    \item $V_2(\{T_t^\alpha\}_{t>0})$ with $1<\alpha\wedge (d+2)/2$ and $1\vee d/\beta<p<\infty$,
    \end{enumerate}
    are not bounded from $L^p(\RR^d)$ into $L^{p,\infty}(\RR^d)$.
\end{thm}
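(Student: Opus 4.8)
The plan is to exploit a general principle: if $V_2(\{S_t\}_{t>0})$ were bounded from $L^p$ into $L^{p,\infty}$, then along \emph{every} decreasing sequence $t_j\downarrow 0$ one would control the square function $\big(\sum_j |S_{t_j}f-S_{t_{j+1}}f|^2\big)^{1/2}$ in $L^{p,\infty}$, and in particular, by Kolmogorov's inequality and a duality/linearization argument, one would obtain a \emph{vector-valued} bound for the map $f\mapsto (S_{t_j}f-S_{t_{j+1}}f)_j$ into $L^{p}(\ell^2)$ locally. The strategy is to contradict this by testing on a single well-chosen function and a single well-chosen pair of times, reducing matters to the known fact that the classical operator $g\mapsto \big(\sum_j |\phi_{2^{-j}}*g|^2\big)^{1/2}$ associated with a nice kernel with nonvanishing mean fails the weak-type $(p,p)$ bound — or more directly, to the failure established in \cite{CCSj,Qi} for the heat/Poisson-type semigroups, transplanted via the subordination formula. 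Concretely, for part~(a), since $W_t^\alpha(f)=\int_0^\infty \eta_{\alpha/2,t}(s)W_s(f)\,ds$ and the profiles $\eta_{\alpha/2,t}$ are smooth, strictly positive, and scale as $\eta_{\alpha/2,t}(s)=t^{-2/\alpha}\eta_{\alpha/2,1}(st^{-2/\alpha})$, the differences $W_{t_j}^\alpha f - W_{t_{j+1}}^\alpha f$ behave, after the change of variables $t\mapsto t^{\alpha/2}$, like the increments of a classical approximation of the identity; one then invokes the counterexample for $V_2$ of the classical heat semigroup (or a direct computation on $f=\mathbf 1_{B(0,1)}$, examining the region $|x|\to\infty$ and the dyadic scales $t_j\sim |x|^\alpha 2^{-j}$) to produce a sequence of functions $f_n$ with $\|f_n\|_p=1$ but $\|V_2(\{W_t^\alpha\}_{t>0})f_n\|_{L^{p,\infty}}\to\infty$.

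For part~(b) the plan is to deduce the failure for $\{T_t^\alpha\}_{t>0}$ from that for $\{W_t^\alpha\}_{t>0}$ using the decomposition $T_t^\alpha = W_t^\alpha + (T_t^\alpha - W_t^\alpha)$. By Proposition~\ref{prop: difference} (quoted in the excerpt), $V_\rho(\{t^\ell\partial_t^\ell(T_t^\alpha-W_t^\alpha)\}_{t>0})$ — in particular $V_2(\{T_t^\alpha-W_t^\alpha\}_{t>0})$, taking $\ell=0$ and noting the proposition is asserted for all $\rho\ge 1$ — is bounded on $L^p(\RR^d)$ for $1\vee d/\beta<p<\infty$, hence a fortiori from $L^p$ into $L^{p,\infty}$. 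Since $V_2$ is subadditive, $V_2(\{W_t^\alpha\}_{t>0})\le V_2(\{T_t^\alpha\}_{t>0})+V_2(\{T_t^\alpha-W_t^\alpha\}_{t>0})$ pointwise; if $V_2(\{T_t^\alpha\}_{t>0})$ were bounded $L^p\to L^{p,\infty}$ it would follow that $V_2(\{W_t^\alpha\}_{t>0})$ is too (using that $L^{p,\infty}$ is a quasi-Banach space, so the sum of two functions in it stays in it with controlled quasinorm), contradicting part~(a) since $1\vee d/\beta<p<\infty$ lies in the admissible range for $0<\alpha<2$ with $\alpha<(d+2)/2$. This reduces everything to proving part~(a) carefully.

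The main obstacle is therefore the rigorous construction of the counterexample in part~(a). The delicate point is that $V_2$, unlike a fixed square function, involves a supremum over all increasing sequences, so one must fix one sequence adapted to the test function and show the resulting square-function lower bound already blows up in $L^{p,\infty}$. I expect the cleanest route is: take $f=\mathbf 1_{B(0,\delta)}$ (or a smooth bump) and estimate $W^\alpha_t f(x)$ for $|x|$ large via the kernel bound $W_t^\alpha(x,y)\asymp \min\!\big(t^{-d/\alpha}, t|x-y|^{-d-\alpha}\big)$; on the annulus $|x|\sim R$ and for the dyadic times $t_j = R^\alpha 2^{-j}$ with $1\le j\le N$, each increment $W_{t_j}^\alpha f(x)-W_{t_{j+1}}^\alpha f(x)$ is of size $\asymp R^{-d}$ with a definite sign on a fraction of the scales, so the $\ell^2$-sum over $j\le N$ is $\gtrsim N^{1/2}R^{-d}$; then $\big|\{x: V_2 f(x) > \lambda\}\big| \gtrsim R^d$ for $\lambda \sim N^{1/2}R^{-d}$, while $\|f\|_p^p \sim \delta^d$, and letting $R,N$ vary defeats any fixed weak-type constant. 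Making the "definite sign on a fraction of scales" claim precise — i.e., showing the increments do not conspire to cancel in $\ell^2$ — is the technical heart; this is exactly where the non-scale-invariance argument of \cite{CCSj} or \cite{Qi} (or a direct asymptotic expansion of $\eta_{\alpha/2,1}$ near $0$ and $\infty$, which governs the sign and size of the increments) must be imported.
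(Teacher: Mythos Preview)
Your reduction for part~(b) is exactly what the paper does: use the subadditivity of $V_2$ together with Proposition~\ref{prop: difference} (which gives $V_2(\{T_t^\alpha-W_t^\alpha\}_{t>0})$ bounded on $L^p$ for $1\vee d/\beta<p<\infty$) to transfer the failure from $\{W_t^\alpha\}$ to $\{T_t^\alpha\}$. No issue there.

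The gap is in part~(a). Your proposed test function $f=\mathbf 1_{B(0,\delta)}$ cannot work. For $|x|$ large the map $t\mapsto W_t^\alpha f(x)$ is \emph{unimodal}: since $W_t^\alpha(z)\sim t\,|z|^{-d-\alpha}$ for $t^{1/\alpha}\ll |z|$ it is increasing in $t$ there, while for $t^{1/\alpha}\gg |x|$ one has $W_t^\alpha f(x)\sim t^{-d/\alpha}\|f\|_1$ decreasing in $t$. Hence the total $1$-variation, and a fortiori the $2$-variation, is at most twice the peak value, which is controlled by the maximal function. Concretely, along your scales $t_j=R^\alpha 2^{-j}$ the increments are of size $\sim 2^{-|j|}R^{-d}\delta^d$ (geometric decay on both sides of the peak), so the $\ell^2$ sum is $O(R^{-d}\delta^d)$ uniformly in $N$; there is no blow-up. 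The ``definite sign on a fraction of scales'' heuristic is not merely delicate here, it is false: there is a single sign change.

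What the paper actually does (following \cite{CCSj}) is to manufacture the oscillations by choosing oscillatory test data rather than a bump. One takes, in dimension one, $g_{1,N}=\sum_{k\in I_N} q_k$ with $q_k$ built from Rademacher functions and $|I_N|=N$; Khinchine gives $\|g_{1,N}\|_p\lesssim \sqrt N$. The key technical step is to show $\{W^\alpha_{2^{-\alpha\ell}}\}_\ell\sim_2\{D_\ell\}_\ell$ (dyadic averages) in the sense of \cite{JW}, which lets one import the lower bound $V_{2,I_N}(\{W^\alpha_{2^{-\alpha\ell}}\})(g_{1,N})\gtrsim \sqrt{N\log\log N}$ on a set of measure tending to $1$. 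The extra $\sqrt{\log\log N}$ over $\sqrt N$ is exactly what kills any weak-type $(p,p)$ bound. Passage to $\RR^d$ is by tensoring with $\chi_{[1,2]}$ in the remaining variables and using the subordination formula plus a simple estimate $|W_t^\alpha(\prod_{i\ge 2}\chi_{[-1,2]})-1|\lesssim t$ on $[0,1]^{d-1}$ to show the extra factors are harmless at the scales $t=2^{-\alpha\ell}$, $\ell\in I_N$. The subordination is genuinely used here (to reduce to the classical heat picture and then to dyadic averages), not to produce sign changes in the kernel; the subordinator density $\eta_{\alpha/2,t}$ is nonnegative, so your suggested ``asymptotic expansion of $\eta_{\alpha/2,1}$'' cannot supply oscillations either.

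In short: keep your argument for (b), but for (a) you must replace the bump-function test by the Rademacher construction of \cite{CCSj} (or an equivalent source of many oscillations across dyadic scales); the paper's Lemmas~\ref{lem: bound W-1}--\ref{lem: lim measure 1} carry this out.
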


The semigroup $\{W_t^\alpha\}_{t>0}$ is defined by subordination. This fact requires a more demanding argument than the one used in the proof of \cite[Theorem 8.1]{CCSj} in order to establish Theorem~\ref{thm: necessity rho>2} for $V_2(\{W_t^\alpha\}_{t>0})$. To prove the corresponding property in Theorem \ref{thm: necessity rho>2} for $V_2(\{T_t^\alpha\}_{t>0})$, we rely on the boundedness of the operator $V_2(\{T_t^\alpha-W_t^\alpha\}_{t>0})$ on $L^p(\mathbb{R}^d)$, provided that $1\vee \frac{d}{\beta}<p<\infty$ (see Proposition~\ref{prop: difference}).

Throughout this paper by $C$ or $c$ we will always denote positive constants that can change from one line to the other. We write $a \lesssim b$ to indicate that there exists a positive constant $c$ such that $a \leq c b$. Similarly, $a \gtrsim b$ means $b \lesssim a$. When both relations hold, we simply write $a \sim b$.

   \section{Weak type estimates for the variation and oscillation operators related to the fractional heat semigroup}\label{sec: weak type var osc}

       As we announced, in this section we will prove the following result.
    \begin{prop}\label{prop: weak type 1-1}
        Let $\alpha\in (0,2)$ and $\ell\in \NN\cup\{0\}$. Then, the operators $V_{\rho}(\{t^\ell\partial^\ell_t W_t^\alpha\}_{t>0})$, for ${\rho>2}$, and $O(\{t^\ell\partial^\ell_t W_t^\alpha\}_{t>0}, \{t_j\}_{j\in\NN})$, where  $\{t_j\}_{j\in\NN}$ is a decreasing sequence of real numbers such that ${\lim_{j\rightarrow\infty}t_j = 0}$, are both bounded from $L^1(\RR^d)$ into $L^{1,\infty}(\RR^d)$. 
    \end{prop}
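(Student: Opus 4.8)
The plan is to run a Calderón--Zygmund decomposition argument, treating the $\rho$-variation and oscillation operators as (sublinear, $\ell^\rho$- or $\ell^2$-valued) Calderón--Zygmund operators. First I would recall that by Theorem \ref{thm: L2 var osc} (or rather the classical version recorded after it, via \cite{LeMX1}) both operators are bounded on $L^2(\RR^d)$; this gives the good-part estimate immediately. For the bad part, fix $f\in L^1(\RR^d)$ and $\lambda>0$, perform the Calderón--Zygmund decomposition of $f$ at height $\lambda$, writing $f=g+b$ with $b=\sum_Q b_Q$, where each $b_Q$ is supported on a dyadic cube $Q$, has mean zero, satisfies $\|b_Q\|_{L^1}\lesssim \lambda|Q|$, and $\sum_Q|Q|\lesssim \lambda^{-1}\|f\|_{L^1}$. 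Let $\Omega=\bigcup_Q 2\sqrt d\, Q$; then $|\Omega|\lesssim\lambda^{-1}\|f\|_1$, so it suffices to estimate the measure of the set where the operator applied to $b$ exceeds $\lambda$, restricted to $\RR^d\setminus\Omega$.

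The heart of the matter is the kernel estimate. Writing $W^\alpha_{t,\ell}(x,y)$ for the kernel of $t^\ell\partial_t^\ell W_t^\alpha$, one has the standard bounds
\begin{equation*}
\big|W^\alpha_{t,\ell}(x,y)\big|\lesssim \frac{t^{1/\alpha}}{(t^{1/\alpha}+|x-y|)^{d+\alpha}},\qquad
\big|\nabla_y W^\alpha_{t,\ell}(x,y)\big|\lesssim \frac{t^{1/\alpha}}{(t^{1/\alpha}+|x-y|)^{d+1+\alpha}},
\end{equation*}
together with the time-derivative bound $|\partial_t W^\alpha_{t,\ell}(x,y)|\lesssim t^{-1}\, t^{1/\alpha}(t^{1/\alpha}+|x-y|)^{-d-\alpha}$, all of which follow from the subordination formula and the known Gaussian bounds for $W_t$ (differentiating under the integral sign in $s$ and using $\int_0^\infty \eta_{\alpha/2,t}(s)\,s^{-a}\,ds\sim t^{-a\cdot 2/\alpha}$ for $a>0$). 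The key point for $\rho$-variation/oscillation is not a pointwise Hörmander condition but an $\ell^\rho$-valued (resp. $\ell^2$-valued) one: for $x\notin 2\sqrt d Q$ and $y,\bar y\in Q$ with center $y_Q$,
\begin{equation*}
V_\rho\big(\{W^\alpha_{t,\ell}(x,y)-W^\alpha_{t,\ell}(x,y_Q)\}_{t>0}\big)\lesssim \frac{|y-y_Q|}{|x-y_Q|^{d+1}},
\end{equation*}
and similarly for the oscillation operator. This is proved by splitting the supremum over partitions at the scale $t_0=|x-y_Q|^\alpha$: for $t\ge t_0$ one uses the gradient bound to get the factor $|y-y_Q|/|x-y_Q|^{d+1}$ uniformly, so the whole tail contributes at most one ``jump'' of that size (controlling the $\rho$-variation, in fact the total variation, on $[t_0,\infty)$), while for $t\le t_0$ one bounds the $\rho$-variation over $(0,t_0]$ by the single-scale estimate $\int_0^{t_0}|\partial_t[W^\alpha_{t,\ell}(x,y)-W^\alpha_{t,\ell}(x,y_Q)]|\,dt$, which by the time-derivative and gradient bounds is again $\lesssim |y-y_Q|/|x-y_Q|^{d+1}$. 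With this in hand, the standard computation gives
\begin{equation*}
\int_{\RR^d\setminus\Omega} V_\rho\big(\{t^\ell\partial_t^\ell W_t^\alpha\}_{t>0}\big)(b)(x)\,dx
\le \sum_Q \int_{\RR^d\setminus 2\sqrt d Q}\!\!\! V_\rho\big(\{W^\alpha_{t,\ell}(x,\cdot)-W^\alpha_{t,\ell}(x,y_Q)\}\big)*|b_Q|\,dx
\lesssim \sum_Q \|b_Q\|_1 \lesssim \|f\|_1,
\end{equation*}
hence $|\{x\in\RR^d\setminus\Omega: V_\rho(\dots)(b)(x)>\lambda\}|\lesssim \lambda^{-1}\|f\|_1$ by Chebyshev. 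Combining with the good part finishes the argument; the oscillation operator is handled identically, using its $\ell^2$-valued Hörmander estimate in place of the $\ell^\rho$ one.

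I expect the main obstacle to be making the $\ell^\rho$-valued (resp. $\ell^2$-valued) smoothness estimate for the subordinated kernel clean and rigorous: one must verify that the variation/oscillation of the difference kernel in the parameter $t$ really does satisfy the Hörmander-type bound, which requires handling the two regimes $t\lessgtr |x-y_Q|^\alpha$ and, in the small-$t$ regime, controlling $\int_0^{t_0}|\partial_t(\cdots)|\,dt$ via the subordination representation of $\partial_t W^\alpha_t$. A secondary technical point, already flagged in the introduction, is the measurability of $V_\rho(\{S_t\}_{t>0})(f)$ and of the oscillation operator; this follows from the $t$-continuity of $t\mapsto t^\ell\partial_t^\ell W_t^\alpha f$ in $L^2$ (and pointwise a.e. along a countable dense set of parameters), exactly as in \cite[p.~60]{CJRW1}, so the supremum in the definition may be restricted to rational partitions.
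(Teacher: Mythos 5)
Your argument is in substance the same as the paper's. Both proofs reduce the $\rho$-variation (and the oscillation) of the kernel in the parameter $t$ to the total variation $\int_0^\infty|\partial_t(\cdot)|\,dt$, via the elementary inequality $\|g\|_\rho\le\int_0^\infty|g'(t)|\,dt$, and then establish $E_\rho$-valued size and H\"ormander estimates for the kernel $z\mapsto\bigl(t\mapsto t^\ell\partial_t^\ell W_t^\alpha(z)\bigr)$; the paper phrases this as verifying that this is an $E_\rho$-valued Calder\'on--Zygmund kernel (with $\|t^\ell\partial_t^\ell W_t^\alpha(z)\|_\rho\lesssim|z|^{-d}$ and $\|t^\ell\nabla_1\partial_t^\ell W_t^\alpha(z)\|_\rho\lesssim|z|^{-d-1}$) and invokes the abstract vector-valued weak type $(1,1)$ theorem, whereas you run the Calder\'on--Zygmund decomposition by hand. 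These are functionally equivalent, so your route is fine. Two minor remarks: (i) the splitting at $t_0=|x-y_Q|^\alpha$ is unnecessary, since $\|g\|_\rho\le\int_0^\infty|g'(t)|\,dt$ already treats all scales at once, and your H\"ormander estimate then follows from the $E_\rho$-valued gradient bound above by the mean value theorem; (ii) your phrase ``the whole tail contributes at most one jump of that size'' is imprecise as stated --- a uniform bound on $|g(t)|$ for $t\ge t_0$ does not by itself control the $\rho$-variation over $[t_0,\infty)$ --- but the parenthetical appeal to the total variation $\int_{t_0}^\infty|g'(t)|\,dt$ is the correct fix and, once invoked, renders the split moot.
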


    In order to prove this proposition, we use the vector-valued Calder\'on--Zygmund theory for singular integrals. If $1 \leq p \leq \infty$ and $E$ is a Banach space, we denote by $L^p(\mathbb{R}^{d}, E)$ the $p$-Bochner--Lebesgue space. We first introduce some notation.

    Suppose that $g:(0,\infty)\rightarrow \CC$. We define, for $\rho>0$,
    \begin{equation*}
        \|g\|_\rho = \sup_{\substack{0<t_1<\dots<t_n \\ n\in\NN }} \left(  \sum_{j=1}^{n-1} |g(t_{j+1}) - g(t_j)|^\rho
        \right)^{1/\rho}.
    \end{equation*}
    It is clear that $\|g\|_\rho = 0$ if and only if $g$ is a constant function. We define the space $E_{\rho}$ that consists of those $g:(0,\infty)\rightarrow \CC$ such that $\|g\|_\rho<\infty$. By identifying each pair of functions that differ by a constant, $(E_\rho,\|\cdot\|_{\rho})$ is a Banach space.

    If $g:(0,\infty)\rightarrow \CC$ is derivable, for every$\rho\geq 1$ it follows that 
    \begin{equation}\label{eq: ineq var-deriv}
\|g\|_\rho\le \int_0^\infty |g'(t)|dt.
    \end{equation}
    
    \begin{proof}[Proof of Proposition~\ref{prop: weak type 1-1}]

    We write
    \begin{equation*}
        W_t^\alpha(f)(x) = \int_{\RR^ d} W_t^\alpha(x-y)f(y)dy, \quad x\in\RR^d \text{ and } t>0,
    \end{equation*}
where 
\[W_t^\alpha(z)=\int_0^\infty \eta_{\alpha/2,t}(s)W_s(z)ds,\quad z\in\RR^d \text{ and } t>0.\]

Let $f\in L^1(\RR^d)$ and $\ell\in \mathbb{N}$. According to \cite[Proposition~2.2]{BDQ} we obtain, for every $0<\epsilon<1$,
\begin{equation*}
\int_{\RR^d} |\partial_t^\ell W_t^\alpha(x-y)||f(y)|dy \leq \frac{C}{t^{\ell+d/\alpha}}\int_{\RR^d}\left(
        \frac{t^{1/\alpha}}{t^{1/\alpha}+|x-y|}
        \right)^{d+\alpha-\epsilon}
         |f(y)| dy 
         \leq C t^{-(\ell+d/\alpha)}\|f\|_{L^1(\RR^d)}
\end{equation*}
for $x\in \RR^d$ and $t>0$. Then, we get  
    \begin{equation*}
        \partial^\ell_t W_t^\alpha(f)(x) = \int_{\RR^d} \partial^\ell_t W_t^\alpha(x-y) f(y) dy, \quad x\in\RR^d \text{ and } t>0.
    \end{equation*}
    From (\ref{eq: ineq var-deriv}) we have that, for every $z\in\RR^d\setminus\{0\}$, 
    \begin{equation*}
        \| t^\ell \partial^\ell_t W_t^\alpha(z)\|_\rho 
        \leq \int_0^\infty |\partial_t(t^\ell\partial^\ell_t W_t^\alpha(z))|dt.
    \end{equation*}
    By using again \cite[Proposition~2.2]{BDQ} it follows that, if $0<\epsilon<1$ and $k\in \NN$,
    \begin{align*}
        \int_0^\infty t^{k-1} |\partial^{k}_t W_t^\alpha(z)|dt & \lesssim \int_0^\infty t^{-d/\alpha - 1} \left(\frac{t^{1/\alpha}}{t^{1/\alpha}+|z|}\right)^{d+\alpha-\epsilon}dt \lesssim \frac{1}{|z|^d}.
    \end{align*}
	for $z\in\RR^d\setminus\{0\}$. Then, 
    \begin{equation}\label{eq: size derivatives W}
        \|t^\ell\partial^\ell_t W_t^\alpha(z) \|_\rho \leq\frac{C_\ell}{|z|^d}, \quad 
        z\in\RR^d\setminus\{0\}.
    \end{equation}

    By proceeding in a similar way to~\cite[Lemma~2.11]{BDM}, we obtain that 
    \begin{equation}\label{eq: size gradient W}
        |\nabla_1 \partial^\ell_t W_t^\alpha (z)|\leq t^{-\frac{d}{\alpha}-\ell-1} \left(\frac{t^{1/\alpha}}{t^{1/\alpha}+|z|}\right)^{d+\alpha+1-\epsilon}, \quad z\in\RR^d\setminus\{0\},
    \end{equation}
therefore
\begin{equation*}
   \|\nabla_1 \partial^\ell_t W_t^\alpha (z)\|_\rho \leq 
        \frac{C}{|z|^{d+1}},\quad z\in\RR^d\setminus\{0\}.
\end{equation*}
    
    We define now the operator
    \begin{equation*}
        \mathcal{L}_\ell(f)(x) = \int_{\RR^d} H_{\ell}(x-y)f(y)dy,\quad x\in\RR^d,
    \end{equation*}
    where for every $z\in\RR^d\setminus\{0\}$,
        $H_{\ell}(z):(0,\infty)\rightarrow \CC$ is given by $[H_{\ell}(z)](t) = t^{\ell}\partial^\ell_t W_t^\alpha(z)$,
  and the integral defining $\mathcal{L}_\ell$ is understood in an $E_\rho$-Bochner sense. 

  Suppose that $f\in L^{\infty}(\RR^d)$ with compact support. According to~\eqref{eq: size derivatives W} we have that
  \begin{equation*}
      \int_{\RR^d} \|H_{\ell}(x-y)\|_{\rho} |f(y)| dy <\infty, \quad x\notin \textup{supp}(f).
  \end{equation*}

  We are going to see that, for $x\notin \textup{supp}(f)$,
  \begin{equation*}
      [\mathcal{L}_\ell(f)(x)](t) = t^k\partial^\ell_t W_t^\alpha(f)(x), \quad \textup{a.e. } t\in(0,\infty).
  \end{equation*}
  Let $a$, $b\in(0,\infty)$, $a\neq b$. We define
  \begin{equation*}
      M[g] = g(b)-g(a),\quad g\in E_\rho.
  \end{equation*}
    Thus, $M\in E'_\rho $, the dual space of $E_\rho$. We have that
    \begin{align*}
            M[\mathcal{L}_\ell(f)(x)] & = \int_{\RR^d} M[H_{\ell}(x-y)]f(y)dy 
            \\ & = \int_{\RR^d} \left[
            \left. t^\ell\partial^\ell_t  W_t^\alpha(x-y)\right|_{t=b} - 
            \left. t^\ell\partial^\ell_t W_t^\alpha(x-y)\right|_{t=a}
            \right] f(y) dy
            \\ & = 
            \left.t^\ell\partial^\ell_t W_t^\alpha(f)(x)\right|_{t=b} 
            - \left.t^\ell\partial^\ell_t W_t^\alpha(f)(x)\right|_{t=a}.
        \end{align*}
    It follows that, for fixed $a\in (0,\infty)$, 
    \begin{equation*}
        [\mathcal{L}_\ell(f)(x)](t) - t^k\partial^\ell_t W_t^\alpha (f)(x) = [\mathcal{L}_\ell(f)(x)](a) -  \left.t^\ell\partial^\ell_t W_t^\alpha (f)(x)\right|_{t=a}, \quad t>0.
    \end{equation*}
    Then, $\mathcal{L}_\ell(f)(x) = \mathbb{L}_\ell(f)(x)$ in $E_\rho$, where $[\mathbb{L}_\ell(f)(x)](t) =  t^\ell\partial^\ell_t W_t^\alpha (f)(x)$, $t>0$.

The operator $\mathbb{L}_{\ell}$ is bounded from $L^{p}(\RR^d)$ into $L^{p}\left(\mathbb{R}^{d}, E_{\rho}\right)$, for every $1<p<\infty$. By~\eqref{eq: size derivatives W} and~\eqref{eq: size gradient W}, $t^{\ell} \partial_{t}^{\ell} W_t^\alpha(z)$ is a $E_{\rho}$-Calderón--Zygmund kernel. Then, we conclude that $\mathbb{L}_{\ell}$ is bounded from $L^{1}(\RR^d)$ into $L^{1, \infty}\left(\mathbb{R}^{d}, E_{p}\right)$, that is, the $\rho$-variation operator $V_{\rho}\left(\left\{t^{\ell} \partial_{t}^{\ell} W_{t}^{\alpha}\right\}_{t>0}\right)$ is bounded from $L^{1}(\RR^d)$ into $L^{1, \infty}(\RR^d)$.

By proceeding in a similar way we can establish that the oscillation operator $O\left(\left\{t^{\ell} \partial_{t}^{\ell} W_{t}^{\alpha}\right\}_{t>0}, \{t_{j}\}_{j \in \mathbb{N}}\right)$ is bounded from $L^{1}(\RR^d)$ into $L^{1, \infty}(\RR^d)$.
 \end{proof}

     \section{Proof of Theorem~\ref{thm: var osc strong-weak}\textnormal{\ref{itm: var osc strong}} for \texorpdfstring{$\rho$}{ρ}-variation operators}\label{sec: difference}

In order to prove the $L^{p}$-boundedness properties for the $\rho$-variation operator $V_{\rho}\left(\left\{t^{k} \partial_{t}^{k} T_t^\alpha\right\}_{t>0}\right)$ we consider the $\rho$-variation operators of the difference
\[
V_{\rho}\left(\left\{t^{k} \partial_{t}^{k}\left(W_{t}^{\alpha}-T_t^\alpha)\right)\right\}_{t>0}\right),  \quad k \in \NN, \rho>2.
\]
Note that the next result holds for every $\rho\ge 1$.
\begin{prop}\label{prop: difference} Let $\alpha\in (1,2\wedge (d+2)/2)$, $\beta\in ((d+\alpha)/2,d+\alpha)$, and $\kappa=\Phi(\beta)$, $k\in\NN\cup \{0\}$ and $\rho\ge 1$. The $\rho$-variation operator $V_{\rho}\left(\left\{t^{k} \partial_{t}^{k}\left(W_{t}^{\alpha}-T_t^\alpha)\right)\right\}_{t>0}\right)$ is bounded in $L^p(\RR^d)$, for every $1\vee \frac{d}{\beta}<p<\infty$.
\end{prop}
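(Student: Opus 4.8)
The plan is to use $\rho\ge1$ to reduce the $\rho$-variation of the difference to a single positive integral operator, and then to deduce its $L^p$-boundedness from a pointwise estimate for the kernel of $t^k\partial_t^k(W_t^\alpha-T_t^\alpha)$ in which the cancellation between the two semigroups removes the non-integrable diagonal singularity that $t^k\partial_t^k T_t^\alpha$ carries on its own.

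\emph{Reduction to a positive kernel.} Set $D_{t,k}^\alpha(x,y)=t^k\partial_t^k\bigl(W_t^\alpha(x-y)-T_t^\alpha(x,y)\bigr)$. Proceeding as in Section~\ref{sec: weak type var osc} (measurability of the variation operator following from the $t$-continuity of the family, as recalled in the introduction), for $f$ in a suitable dense class one has $\bigl(t^k\partial_t^k(W_t^\alpha-T_t^\alpha)\bigr)(f)(x)=\int_{\RR^d}D_{t,k}^\alpha(x,y)f(y)\,dy$, and hence
\[
V_\rho\bigl(\{t^k\partial_t^k(W_t^\alpha-T_t^\alpha)\}_{t>0}\bigr)(f)(x)\le\int_{\RR^d}\|D_{\cdot,k}^\alpha(x,y)\|_\rho\,|f(y)|\,dy\le\int_{\RR^d}\mathcal D_k(x,y)\,|f(y)|\,dy ,
\]
where $\mathcal D_k(x,y):=\int_0^\infty|\partial_t D_{t,k}^\alpha(x,y)|\,dt$: the first inequality is Minkowski's integral inequality for the seminorm $\|\cdot\|_\rho$ and the second is \eqref{eq: ineq var-deriv}, which is the only place where $\rho\ge1$ is needed. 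It therefore suffices to bound the positive operator with kernel $\mathcal D_k$ on $L^p(\RR^d)$ for $1\vee\frac d\beta<p<\infty$.

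\emph{The pointwise estimate.} The target is
\[
\mathcal D_k(x,y)\lesssim\frac{\mathbf 1_{\{|x-y|\le 2|y|\}}}{|x-y|^{d-1}\,|y|}+\frac{\mathbf 1_{\{|x-y|> 2|y|\}}}{|x-y|^{\beta}\,|y|^{d-\beta}},\qquad x,y\in\RR^d .
\]
I would obtain it from the Duhamel-type identity $W_t^\alpha-T_t^\alpha=\kappa\int_0^t W_{t-s}^\alpha\bigl(\tfrac{(\cdot)}{|\cdot|^{\alpha}}\cdot\nabla T_s^\alpha\bigr)\,ds$ (which follows by differentiating $s\mapsto W_{t-s}^\alpha T_s^\alpha$ and using that $(-\Delta)^{\alpha/2}$ commutes with $W_{t-s}^\alpha$), then differentiating in $t$ and inserting the pointwise bound for $W^\alpha$ from \cite[Proposition~2.2]{BDQ} together with the gradient bound for $T^\alpha$ of \cite[Lemma~2.11]{BDM}. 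Carrying out the $z$-integral — where the factor $|z|^{1-\alpha}$ of the drift appears — then the $s$-integral, and finally integrating in $t$ after splitting $(0,\infty)$ according to the relative sizes of $t^{1/\alpha}$, $|x-y|$ and $|y|$, one checks that when $t^{1/\alpha}\lesssim|y|$ the difference carries an extra factor $t^{1/\alpha}/(t^{1/\alpha}+|y|)$ compared with what $T_{t,k}^\alpha$ alone produces; this turns the non-integrable diagonal size $|x-y|^{-d}$ into the locally integrable $|x-y|^{1-d}|y|^{-1}$ on $\{|x-y|\le 2|y|\}$. In the range $|x-y|>2|y|$ there is no gain and the contribution is the same $|x-y|^{-\beta}|y|^{-(d-\beta)}$ as for $T_{t,k}^\alpha$, coming from the tail $\int_{|y|^{\alpha}}^\infty t^{-1-\beta/\alpha}|y|^{-(d-\beta)}\,dt$.

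\emph{Conclusion.} The two pieces are then treated separately. The operator with kernel $\mathbf 1_{\{|x-y|\le 2|y|\}}|x-y|^{1-d}|y|^{-1}$ is bounded on $L^p(\RR^d)$ for every $1<p<\infty$: its kernel is homogeneous of degree $-d$ and a Schur test with $h(x)=|x|^{-a}$, $0<a<d/p$, applies, the restriction $|x-y|\le 2|y|$ making the two Schur integrals converge at the origin and at infinity. The operator with kernel $|x-y|^{-\beta}|y|^{-(d-\beta)}$ is bounded on $L^p(\RR^d)$ precisely for $p>1\vee\frac d\beta$, by the Stein--Weiss inequality when $\beta<d$ and, when $\beta\ge d$, simply by dominating the (restricted) kernel by the Hardy--Littlewood maximal operator. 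Adding the two contributions yields the boundedness of $\mathcal D_k$ on $L^p(\RR^d)$ for $1\vee\frac d\beta<p<\infty$. The main obstacle is the pointwise estimate: one must combine the Duhamel representation with the $W^\alpha$ and $T^\alpha$ kernel and gradient bounds and then carefully track the three scales $t^{1/\alpha}$, $|x-y|$, $|y|$ through the successive integrations in order to exhibit the cancellation that kills the diagonal singularity $|x-y|^{-d}$; the remaining steps (the Schur test and Stein--Weiss) are routine.
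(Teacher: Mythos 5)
Your overall strategy coincides with the paper's: reduce the $\rho$-variation of the difference to the single positive integral operator $\int_0^\infty|\partial_t\bigl(t^k\partial_t^k(W_t^\alpha-T_t^\alpha)\bigr)|\,dt$ via Minkowski's inequality and \eqref{eq: ineq var-deriv}, then use a Duhamel representation together with the kernel estimates of \cite{BDM, BDQ}, split into a diagonal and an off-diagonal region, and finish with a Schur-type test. The paper also works with a space-time decomposition $\Omega_{\loc}\cup\Omega_\glob$ and splits the Duhamel integral at $s=t/2$ so that, after taking $k$ time-derivatives, the semigroup kernels are never evaluated at short times; your write-up does not spell out this point but it is not a departure in kind.

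There is, however, a concrete error in the claimed key pointwise estimate on the diagonal region. You assert that for $t^{1/\alpha}\lesssim|y|$ the difference kernel picks up an extra factor $t^{1/\alpha}/(t^{1/\alpha}+|y|)\sim t^{1/\alpha}/|y|$ relative to the bound for $T_{t,k}^\alpha$ alone, which after the $t$-integration yields $\mathcal D_k(x,y)\lesssim|x-y|^{1-d}|y|^{-1}$ on $\{|x-y|\le 2|y|\}$. This is not what Duhamel gives. The drift $\kappa|z|^{-\alpha}z\cdot\nabla$ has magnitude $\kappa|z|^{1-\alpha}$; on the diagonal region $|z|\sim|y|$, and the $s$-integral $\int_0^t s^{-1/\alpha}\,ds\sim t^{1-1/\alpha}$ produces the factor $|y|^{1-\alpha}\,t^{(\alpha-1)/\alpha}=(t^{1/\alpha}/|y|)^{\alpha-1}$. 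Since $\alpha-1\in(0,1)$, the true gain has exponent $\alpha-1$, not $1$, so after integrating in $t$ the correct diagonal bound is $\mathcal D_k(x,y)\lesssim(|x|\vee|y|)^{1-\alpha}\,|x-y|^{\alpha-d-1}$ on $\{|x-y|\lesssim|y|\}$ (this is precisely the kernel the paper obtains in \eqref{eq: bound Kloc} and the subsequent display), which is strictly larger than your claimed $|x-y|^{1-d}|y|^{-1}$ by the factor $(|y|/|x-y|)^{2-\alpha}\ge1$. The good news is that the correct (weaker) bound is still a Schur kernel on $\{|x-y|\lesssim|y|\}$ — it is homogeneous of degree $-d$ and has bounded row and column sums precisely because $\alpha>1$ — so your Schur argument, and hence the conclusion, survives once the pointwise estimate is replaced by the right one. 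What cannot stand as written is the one-power gain, and since the passage from Duhamel to the pointwise estimate (the part you describe only by ``one checks that'') is the substantive content of the proof, the proposal as given has a genuine gap here.
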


\begin{proof} By using (\ref{eq: ineq var-deriv}) we have that
\begin{equation*}
    \begin{split}
        V_\rho & (\{t^k\partial^k_t(W_t^\alpha -T_t^\alpha)\}_{t>0})(f)(x) \\ & =  \sup_{\substack{0<t_1<\dots<t_n \\ n\in\NN}} \left( \sum_{j=1}^{n-1} \left|\left.t^k\partial^k_t (W_t^\alpha -T_t^\alpha)\}_{t>0})(f)(x)\right|_{t=t_{j+1}} - \left.t^k\partial^k_t (W_t^\alpha -T_t^\alpha)\}_{t>0})(f)(x)\right|_{t=t_j}\right|^\rho\right)^{1/\rho}
        \\ & \leq   \int_{0}^{\infty} \left|\partial_t\left(t^k\partial^k_t (W_t^\alpha -T_t^\alpha)\}_{t>0})(f)(x)\right)\right| dt.
    \end{split}
\end{equation*}
Note that the last estimate holds for every $\rho \geq 1$ (not only for $\rho>2$).

We define, for every $\ell \in \mathbb{N} \cup\{0\}$ and $f\in L^p(\mathbb{R})$, with $\frac{d}
{\beta}\vee 1<p<\infty$, the operator 
\[
\TT _{\ell}(f)(x)=\int_{\mathbb{R}^{\alpha}} f(y) \int_{0}^{\infty} \left| t^{\ell} \partial_{t}^{\ell+1}\left(W_{t}^{\alpha}(x - y)-T_t^\alpha(x, y)\right) \right| dt dy, \quad x \in \mathbb{R}^d.
\]

Let $k \in \mathbb{N}$ and $f \in L^{p}(\RR^d)$ with $1<p<\infty$. According to \cite[Proposition~2.8]{BDM} we obtain, for every $\epsilon>0$,
\begin{align*}
\int_{\RR^d} |\partial_t^kW_t^\alpha(x-y)||f(y)|dy
&  \lesssim \frac{1}{t^{k+d/\alpha}}\int_{\RR^d}\left(
        \frac{t^{1/\alpha}}{t^{1/\alpha}+|x-y|}
        \right)^{d+\alpha-\epsilon}
        \left(1+\frac{t^{1/\alpha}}{|y|}\right)^{d-\beta} |f(y)| dy 
        \\ & \lesssim \|f\|_{L^p(\RR^d)}
        \left(
        \int_{\RR^d}\left(
        \frac{t^{1/\alpha}}{t^{1/\alpha}+|x-y|}
        \right)^{(d+\alpha-\epsilon)p'}
        \left(1+\frac{t^{1/\alpha}}{|y|}\right)^{(d-\beta)p'}
        dy\right)^{1/p'}
    \end{align*}
for $x\in \RR^d$ and $t>0$. Notice that
\[-(d-\beta)p'+d-1>-1\,\Longleftrightarrow\, (d-\beta)p<d(p-1) \,\Longleftrightarrow\, \beta p-d>0.\]
Also, we have
$(d+\alpha-\epsilon)p'-d>0$ when $0<\epsilon<\alpha$. Hence, if $\epsilon>0$ is small enough we can take derivatives with respect to $t$ under the integral sign when $p>1\vee \frac{d}{\beta}$, to get
\begin{equation*}
    \partial_t^k \int_{\RR^d} W_t^\alpha(x-y)f(y)dy = \int_{\RR^d}\partial^k_t W_t^\alpha(x-y) f(y) dy, \quad x\in \RR^d.
\end{equation*}
Then, Fubini's theorem leads to
\begin{equation*}
    V_\rho(\{t^k\partial^k_t(W_t^\alpha -T_t^\alpha)\}_{t>0})(f)(x) \leq k \TT _{k-1}(|f|)(x)+ \TT_k (|f|)(x),\quad x\in \RR^d.
\end{equation*}
Note that the first term in the right-hand side does not appear when $k=0$. 

Let $\ell\in \NN\cup\{0\}$. Our next objective is to prove the $L^p$-boundedness properties for the operator $\TT _\ell$. We decompose $\RR^d \times \RR^d \times (0,\infty)$ into two regions that will be named the \textit{local part} and the \textit{global part}. The local part $\Omega_{\loc}$ of $\RR^d \times\RR^d \times (0,\infty)$ is defined by
\begin{equation*}
    \Omega_{\loc} = \left\{ (x,y,t)\in \RR^d \times\RR^d \times (0,\infty): t\leq |y|^\alpha \text{ and } |x-y|\leq \frac{|x|\wedge |y|}{2}\right\}
\end{equation*}
 and the global part $\Omega_{\textup{glob}}$ is defined by $\Omega_{\textup{glob}} = (\RR^d \times\RR^d \times (0,\infty)) \setminus \Omega_{\loc}$.

 Duhamel's formula allows us to write 
 \begin{equation*}
     \begin{split}
         T_t^\alpha(x,y) - W_t^\alpha(x-y) &  = -k
         \int_0^t \int_{\RR^d} T_{t-s}^{\alpha}(x,z) |z|^{-d} z\cdot \nabla_1 W_t^\alpha(z-y)dzds
          \\ & = -k \int_0^{t/2} \int_{\RR^d} T_{t-s}^{\alpha}(x,z) |z|^{-d} z\cdot \nabla_1 W_t^\alpha(z-y)dzds
          \\ &\quad -k \int_0^{t/2} \int_{\RR^d} T_t^\alpha(x,z) |z|^{-d} z\cdot  \nabla_1 W^\alpha_{t-s}(z-y)dzds,
     \end{split}
 \end{equation*}
for $x, y\in\RR^d$ and $t>0$.

Then,
\begin{equation*}
    \begin{split}
        |\partial^k_t (T_t^\alpha(x,y) - W_t^\alpha(x-y))|& \leq C \left( \sum_{j=0}^{k-1} \int_{\RR^d}|\partial_t^j T_t^\alpha(x,z)| |z|^{-d} |z\cdot\nabla_1\partial^{k-1-j}_t W^\alpha_{t/2}(z-y)|dz\right.
        \\ & \quad + 
        \int_0^{t/2} \int_{\RR^d} |\partial^k_u T_u^\alpha(x,z)|_{u=t-s} |z|^{-d}|z\cdot\nabla_1 W^\alpha_s(z-y)| dzds 
         \\ &\quad  \left. + 
        \int_0^{t/2} \int_{\RR^d} |T_s^{\alpha}(x,z)| |z|^{-d}|z\cdot\nabla_1 \partial^k_u W^\alpha_u(z-y)|_{u=t-2} dzds\right)
        \\ & := \sum_{j=0}^{k+1} K_j(t,x,y), \quad x,y\in\RR^d \text{ and } t>0,
    \end{split}
\end{equation*}
where, for every $x, y \in \mathbb{R}^{d}$ and $t>0$, 
\begin{align*}
K_{j}(t, x, y)&=\int_{\mathbb{R}^{d}}\left|\partial_{t}^{j} T_{t / 2}^{\alpha}(x, z)\right||z|^{-d}\left|z \cdot \nabla_1 \partial_{t}^{k-1-j} W_{t / 2}^{\alpha}(y- z)\right| dz, \quad j=0, \dots, k-1, \\
K_{k}(t, x, y)&=\int_{0}^{t / 2} \int_{\mathbb{R}^{d}}\left|\partial_{u}^{k} T_{u}^\alpha(x, z)\right|_{u=t-s}|z|^{-d}\left|z\cdot \nabla_1 W_{s}^{\alpha}(z- y)\right| dz ds,\\
K_{k+1}(t, x, y)&=\int_{0}^{t / 2} \int_{\RR^{d}}\left|T_{s}^{\alpha}(x, z)\right||z|^{-d}\left|\left. z\cdot\nabla_1 \partial_{u}^{k} W_{u}^{\alpha}(z- y)\right|_{u=t-s}\right| dz ds.
\end{align*}

We consider, for every $j=0, \dots, k+1$ and $k \in \NN$,
\begin{equation*}
    \begin{split}
        \KK_{j,\loc}(x,y) = \int_{0}^{\infty} \chi_{\Omega_{\loc}}(x, y, t) K_j(t,x,y) t^{k-1} dt, \quad x, y \in \RR^{d}.
    \end{split}
\end{equation*}

Let $k \in \mathbb{N}$ and $j=0, \ldots, k-1$. According to \cite[Proposition~2.9]{BDM}, we have that, for
every  $\epsilon>0$,
\begin{equation}\label{eq: bound derivative T}
	\left|\partial_t^j T_{t/2}^{\alpha}(x,z)\right|\lesssim t^{-j-\frac d\alpha}\left(\frac{t^{1/\alpha}+|x-z|}{t^{1/\alpha}}\right)^{-d-\alpha+\epsilon}\left(1+\frac{t^{1/\alpha}}{|z|}\right)^{d-\beta}, \quad x,z\in \RR^d, t>0.
\end{equation}

By proceeding as in the proof of \cite[Lemma~2.11]{BDM},
\begin{equation}\label{eq: bound gradient-derivative W}
	\left|\nabla_1 \partial_t^{k-1-j} W_{t/2}^\alpha (y-z)\right| \lesssim t^{-\frac{d+1}{\alpha}-(k-1-j)} \left(\frac{t^{1/\alpha}}{t^{1/\alpha}+|z-y|}\right)^{d+\alpha+1+\epsilon}, \quad y,z\in \RR^d, t>0,
\end{equation}
for $\epsilon>0$. The constants appearing in \eqref{eq: bound derivative T} and \eqref{eq: bound gradient-derivative W} depend on $\epsilon$.

By \eqref{eq: bound derivative T} and \eqref{eq: bound gradient-derivative W} we get
\begin{align*}
	K_j(t,x,y)&\lesssim t^{-(k-1)-\frac{2d+1}{\alpha}} \int_{\RR^d} \left(1+\frac{t^{1/\alpha}}{|z|}\right)^{d-\beta}|z|^{1-d}\left(\frac{t^{1/\alpha}+|x-z|}{t^{1/\alpha}}\right)^{-d-\alpha+\epsilon_1}\\
	&\quad \times\left(\frac{t^{1/\alpha}+|z-y|}{t^{1/\alpha}}\right)^{-d-\alpha-1+\epsilon_2}dz, \quad x,y\in \RR^d, t>0,
\end{align*}
where $\epsilon_1, \epsilon_2>0$.
Note that, for a suitable positive function $F$,
\[
 t^{k} K_{j}(t, x, y)\leq t^{-\frac d\alpha} F\left(\frac{x}{t^{1/\alpha}}, \frac{y}{t^{1/\alpha}}\right), \quad x, y \in \mathbb{R}^d, t>0.
\]

By taking into account the last homogeneity property we are going to estimate $K_{j}(t, x, y)$, when $1 \leq|y|^{\alpha}$ and $|x-y| \leq \frac{|x| \wedge|y|}{2}$. In order to do this we use some ideas from~\cite[p. 30]{BDM}. We have that, for every $x,y\in\RR^d$, 
\begin{equation*}
    (1-t)^{-\frac{d+1}{\alpha}}\left(
    \frac{t^{1/\alpha}}{(1-t)^{1/\alpha}+|x-y|}\right)^{d+\alpha+1-\epsilon_2}\sim \left(\frac{1}{1+|x-y|}\right)^{d-\alpha+1-\epsilon_2},
\end{equation*}
for $t\in [\frac12,\frac23]$,
and
\begin{equation*}
    \left(1 \wedge \frac{|z|}{t^{1 / \alpha}}\right)^{\beta-d} t^{-\frac d\alpha}\left(\frac{t^{1 / \alpha}}{t^{1 / \alpha}+|x-y|}\right)^{d+\alpha-\epsilon_{1}} \sim(1 \wedge|z|)^{\beta-d}\left(\frac{1}{1+|x-y|}\right)^{d+\alpha-\epsilon_1},
\end{equation*}
for $t\in [\frac12,\frac23]$.

It follows that 
\begin{equation*}
    \begin{split}
        K_j(1,x,y)& = \frac{1}{2} \int_{\frac12}^{\frac23}\int_{\RR^d} (1\wedge|z|)^{\beta-d} \left( \frac{1}{1+ |x-y|}\right)^{d+\alpha-\epsilon_1} |z|^{1-\alpha} \left(\frac{1}{1+|z-x|}\right)^{d+\alpha+1-\epsilon_2}dzds
        \\ & \lesssim  \int_{\frac12}^{\frac23}\int_{\RR^d} (1-s)^{-\frac{1+d}{\alpha}} 
        \left(1\wedge\frac{|z|}{(1-s)^{1/\alpha}}\right)^{\beta-d} \left( \frac{(1-s)^{1/\alpha}}{(1-s)^{1/\alpha}+ |x-y|}\right)^{d+\alpha-\epsilon_1} \\
        &\quad \times |z|^{1/\alpha} s^{-\frac d\alpha} \left(\frac{s^{1/\alpha}}{s^{1/\alpha}+|z-x|}\right)^{d+\alpha+1-\epsilon_2}dzds
        \\ & \lesssim  \int_{0}^{1}\int_{\RR^d} (1-s)^{-\frac{1+d}{\alpha}} 
        \left(1\wedge\frac{|z|}{(1-s)^{1/\alpha}}\right)^{\beta-d} \left( \frac{(1-s)^{1/\alpha}}{(1-s)^{1/\alpha}+ |x-y|}\right)^{d+\alpha-\epsilon_1}\\
        &\quad \times  |z|^{1/\alpha} s^{-\frac d\alpha} \left(\frac{s^{1/\alpha}}{s^{1/\alpha}+|z-x|}\right)^{d+\alpha+1-\epsilon_2}dzds
        \\ & \lesssim (|x|\vee |y|)^{1-\alpha} 
        \left( \frac{1}{1+|x-y|}\right)^{d+\alpha-\epsilon_1}, \quad |y|\geq 1 \text{ and } |x-y|\leq \frac{|x|\wedge|y|}{2}.
    \end{split}
\end{equation*}
Then, for $j=0,\dots, k-1$,
\begin{equation*}
    t^k K_j(t,x,y) \leq C t^{-d/\alpha} \left( \frac{|x|\vee |y|}{t^\alpha}\right)^{1-\alpha} \left(\frac{t^{1/\alpha}}{t^{1/\alpha}+|x-y|}\right)^{d+\alpha-\epsilon_1},
\end{equation*}
for $t>0$, $|y|\geq t^{1/\alpha}$ and $|x-y|\leq \frac{|x|\wedge |y|}{2}$.

On the other hand, for $j=k$, we have that
\begin{equation*}
\begin{split}
    K_k(t,x,y) &\lesssim \int_{0}^{t/2} \int_{\RR^d} (t-s)^{-(k+d/\alpha)} \left( \frac{(t-s)^{1/\alpha}}{(t-s)^{1/\alpha}+|x-z|}\right)^{d+\alpha-\epsilon_1} \left( 1 + \frac{(t-s)^{1/\alpha}}{|z|}\right)\\
    &\quad\times|z|^{1-\alpha}s^{-\frac{d+1}{\alpha}}
    \left( \frac{s}{s^{1/\alpha} + |z-y|}\right)^{d+1+\alpha} dzds,
\end{split}
\end{equation*}
for $x$, $y\in\RR^d$ and $t>0$. Since $t-s\sim t$ when $0<s<t/2$, we get
\begin{equation*}
    \begin{split}
        t^k K_k(t,x,y) & \lesssim \int_0^{t/2} \int_{\RR^d} (t-s)^{-d/\alpha} \left( \frac{(t-s)^{1/\alpha}}{(t-s)^{1/\alpha} + |x-z|}\right)^{d+\alpha-\epsilon_1} \left( 1 + \frac{(t-s)^{1/\alpha}}{|z|}\right)\\
        &\quad \times |z|^{1-\alpha} s^{-\frac{d+1}{\alpha}}
         \left( \frac{s}{s^{1/\alpha} + |z-y|}\right)^{d+1+\alpha} dzds,
        \\ & = \frac{1}{t} G\left(\frac{x}{t^{1/\alpha}}, \frac{y}{t^{1/\alpha}} \right),
    \end{split}
\end{equation*}
for $x$, $y\in\RR^d$ and $t>0$, and a certain positive function $G$ defined in $\RR^d \times \RR^d$. By \cite[Lemma~4.2]{BDM} we deduce that
\begin{equation*}
    K_k(1,x,y) \lesssim (|x| \vee |y|)^{1-\alpha} \left( \frac{1}{1 + |x-y|}\right)^{d + \alpha - \epsilon_1}, 
\end{equation*}
for $|y|\geq 1$ and $|x-y| \leq \frac{|x|\wedge|y|}{2}$. Hence
\begin{equation*}
    t^k K_k(t,x,y) \lesssim \left( \frac{|x|\vee |y|}{t^{1/\alpha}}\right)^{1-\alpha} \frac{1}{t^{d/\alpha}} \left( \frac{t^{1/\alpha}}{t^{1/\alpha} + |x-y|}\right)^{d+\alpha - \epsilon_1},
\end{equation*}
for $|y|\geq t^{1/\alpha}$ and $|x-y| \leq \frac{|x|\wedge|y|}{2}$. 

We also have that
\begin{align*}
    K_{k+1}(t,x,y) &\lesssim \int_0^{t/2} \int_{\RR^d} 
    \frac{1}{s^{d/\alpha}} \left( 1+ \frac{s^{1/\alpha}}{|z|}\right)^{d-\beta} \left( \frac{s^{1/\alpha}}{s^{1/\alpha} + |x-z|}\right)^{d+\alpha-\epsilon_1} \\
    &\quad \times|z|^{1-\alpha} (t-s)^{-\frac{d+1}{\alpha} - k} 
    \left( \frac{(t-s)^{1/\alpha}}{(t-s)^{1/\alpha} + |z-y|}\right)^{d+\alpha+1-\epsilon_2} dzds,
\end{align*}
for $x, y\in\RR^d$ and $t>0$, and
\begin{equation*}
    t^k K_{k+1}(t,x,y) \leq \frac{1}{t^{d/\alpha}} H\left( \frac{x}{t^\alpha}, \frac{y}{t^\alpha}\right), 
\end{equation*}
for $x$, $y\in \RR^d$, $t>0$, where $H$ is a positive function defined in $\RR^d\times\RR^d$. Therefore,
\begin{equation*}
    \begin{split}
        K_{k+1}(1,x,y) & \lesssim \int_0^{1/2} \int_{\RR^d} \frac{1}{s^{d/\alpha}} \left( 1+ \frac{s^{1/\alpha}}{|z|}\right)^{d-\beta} \left( \frac{s^{1/\alpha}}{s^{1/\alpha}+|x-z|}\right)^{d+\alpha-\epsilon_1} 
        \\ &\quad \times |z|^{1-\alpha} \frac{1}{(1-s)^{\frac{d+1}{\alpha}+k}} \left( \frac{(1-s)^{1/\alpha}}{(1-s)^{1/\alpha} + |z-y|}\right)^{d+\alpha+1-\epsilon_2} dzds 
        \\ & \lesssim \int_0^{1/2} \int_{\RR^d} \frac{1}{s^{d/\alpha}} \left( 1+ \frac{s^{1/\alpha}}{|z|}\right)^{d-\beta} \left( \frac{s^{1/\alpha}}{s^{1/\alpha}+|x-z|}\right)^{d+\alpha-\epsilon_1} 
        \\ & \quad \times|z|^{1-\alpha}\frac{1}{(1-s)^{\frac{d+1}{\alpha}}} \left( \frac{(1-s)^{1/\alpha}}{(1-s)^{1/\alpha} + |x-z|}\right)^{d+\alpha+1-\epsilon_2} dzds,
    \end{split}
\end{equation*}
for $x$, $y\in \RR^d$. Again, according to \cite[Lemma~4.2]{BDM} we obtain
\begin{equation*}
    K_{k+1}(1,x,y) \lesssim (|x|\vee|y|)^{1-\alpha} \left( \frac{1}{1 + |x-y|}\right)^{d+\alpha - \epsilon_1},
\end{equation*}
for $|y|\geq 1$, $|x-y|\leq (|x|\wedge |y|)/2$, and
\begin{equation*}
    t^k K_{k+1}(t,x,y) \lesssim \left(\frac{|x|\vee|y|}{t^{1/\alpha}}\right)^{1-\alpha} \frac{1}{t^{d/\alpha}} \left(\frac{t^{1/\alpha}}{t^{1/\alpha}+|x-y|}\right)^{d+\alpha-\epsilon_1},
\end{equation*}
for $|y|\geq t^{1/\alpha}$ and $|x-y| \leq (|x|\wedge|y|)/2$. We conclude that, for every $j=0,\dots,k+1$, 
\begin{equation}\label{eq: bound Kloc}
    \mathbb{K}_{j,\loc}(x,y) \leq C \int_{0}^{\infty} \chi_{\Omega_{\loc}}(x,y,t) \left( \frac{|x|\vee|y|}{t^{1/\alpha}}\right)^{1-\alpha} 
    \frac{1}{t^{1/\alpha}}
 \left(\frac{t^{1/\alpha}}{t^{1/\alpha}+|x-y|}\right)^{d+\alpha-\epsilon_1} \frac{dt}{t},
 \end{equation}
 for $x, y\in\RR^d$. 

We consider now the global kernel, for $\ell \in \NN\cup\{0\}$, given by
\[\GG_\ell (x,y) =\int_0^\infty \chi_{\Omega_\glob} (x,y,t)\left|t^\ell \partial_t^{\ell+1}\left(W_t^\alpha(x,y)-T_t^\alpha(x,y)\right)\right|dt, \quad x,y\in \RR^d. \]
Let $0<\epsilon<\alpha$. By \cite[Proposition~2.9]{BDM} we get
\[\left|t^\ell \partial_t^{\ell+1} T_t^\alpha(x,y)\right|\lesssim t^{-\frac d\alpha-1} \left(\frac{t^{1/\alpha}}{t^{1/\alpha}+|x-y|}\right)^{d+\alpha-\epsilon}\left(1+\frac{t^{1/\alpha}}{|y|}\right)^{d-\beta}, \quad x,y\in \RR^d, t>0,\]
and by \cite[(2.13)]{BDM},
\[\left|t^\ell \partial_t^{\ell+1} W_t^\alpha(x,y)\right|\lesssim t^{-\frac d\alpha-1} \left(\frac{t^{1/\alpha}}{t^{1/\alpha}+|x-y|}\right)^{d+\alpha-\epsilon}, \quad x,y\in \RR^d, t>0.\]

Using \cite[p.~29, line~$-3$]{BDM}, we deduce that
\[\left|t^\ell \partial_t^{\ell+1} \left(W_t^\alpha(x,y)-T_t^\alpha(x,y)\right)\right|\lesssim \frac1t L_t^{\alpha-\epsilon}(x,y), \quad (x,y,t)\in \Omega_\glob,\]
where, for any $\gamma >0$
\begin{align*}
	L_t^\gamma(x,y)&=\chi_{\{|y|\leq t^{1/\alpha}\}}(x,y) \left(\frac{|y|}{t^{1/\alpha}}\right)^{\beta-d-(\alpha-\gamma)}t^{-\frac d\alpha} \left(\frac{t^{1/\alpha}}{t^{1/\alpha}+|x-y|}\right)^{d+\gamma}\\
	&\quad +\chi_{\{|y|\leq t^{1/\alpha}, |x|\sim |y|\}}(x,y) \left(\frac{|y|}{t^{1/\alpha}}\right)^{1+\beta-d-\alpha}t^{-\frac d\alpha}\\
	&\quad +\chi_{\left\{|y|\leq t^{1/\alpha}, |x-y|\geq \frac{|x|\wedge |y|}{2}\right\}}(x,y)t^{-\frac d\alpha} \left(\frac{t^{1/\alpha}}{t^{1/\alpha}+|x-y|}\right)^{d+\gamma}, \quad x,y\in \RR^d, t>0.
\end{align*}
Then,
\begin{equation}\label{eq: bound Gl}
	\GG_\ell(x,y)\lesssim \int_0^\infty \chi_{\Omega_\glob}(x,y,t) L_t^{\alpha-\epsilon}(x,y) \frac{dt}{t}, \quad x,y\in \RR^d.
\end{equation}

By combining \eqref{eq: bound Kloc} and \eqref{eq: bound Gl} and applying the Schur test as in \cite[pp.~33-35]{BDM}, we conclude that the operator $\TT_\ell$ is bounded on $L^p(\RR^d)$ for every $1\vee \frac{d}{\beta}<p<\infty$.

Therefore, the $\rho$-variation operator $V_\rho\left(\{t^k\partial_t^k \left(T_t^\alpha-W_t^\alpha\right)\}_{t>0}\right)$ is also bounded on $L^p(\RR^d)$ for every $1\vee \frac{d}{\beta}<p<\infty$.
\end{proof}

Since, as it was mentioned, the $\rho$-variation operator $V_\rho(\{t^k\partial _t^kW_t^\alpha\}_{t>0})$ is bounded on $L^p(\mathbb{R}^d)$ for every $1<p<\infty$, we conclude that the $\rho$-variation operator $V_\rho(\{t^k\partial_t^kT_t^\alpha \}_{t>0})$ is bounded on $L^p(\mathbb{R}^d)$ provided that $1\vee \frac{d}{\beta}<p<\infty$.

\section{Proof of Theorem~\ref{thm: var osc strong-weak}. Endpoint results}\label{sec: endpoint var}

We denote by $\sigma=1\vee \frac d\beta$. Let $f\in L^\sigma (\RR^d)\cap L^2(\RR^d)$. We can write
\begin{equation}\label{eq: T small than variation}
    |T_t^\alpha(f)|\leq V_{\rho}\left(\{T_t^\alpha\}_{t>0}\right)(f)+|T_s(f)|, \quad 0<s<t<\infty.
\end{equation}
Since $V_{\rho}\left(\{T_t^\alpha\}_{t>0}\right)$ is bounded on $L^2(\RR^d)$, $V_{\rho}\left(\{T_t^\alpha\}_{t>0}\right)(f)<\infty$ almost everywhere. Then, there exists the limit $\lim_{s\to 0^+} T_s^\alpha(f)(x)$ for a.e. $x\in \mathbb R^d$. 

On the other hand, $\{T_t^\alpha\}_{t>0}$ is a $C_0$-semigroup in $L^2(\RR^d)$. Then, $\lim_{s\to 0^+} T_s^\alpha(f)=f$ in $L^2(\RR^d)$, from which we get $\lim_{s\to 0^+} T_s^\alpha(f)(x)=f(x)$ for a.e. $x\in \mathbb R^d$. From \eqref{eq: T small than variation} we deduce that
\begin{equation}\label{eq: T small than variation 2}
    |T_t^\alpha(f)(x)|\leq V_{\rho}\left(\{T_t^\alpha\}_{t>0}\right)(f)(x)+|f(x)|, \quad t>0, \text{ a.e }x\in \mathbb R^d.
\end{equation}

We define the maximal operator $T_*^{\alpha}$ by
\[T_*^{\alpha}(f)=\sup_{t>0} |T_t^\alpha(f)|.\]
From \eqref{eq: T small than variation 2} we get
\[T_*^{\alpha}(f)\leq V_{\rho}\left(\{T_t^\alpha\}_{t>0}\right)(f)+|f|.\]
Clearly, the maximal operator $T_*^{\alpha}$ is bounded from $L^\sigma (\RR^d)$ to $L^{\sigma,\infty} (\RR^d)$ provided that $V_{\rho}\left(\{T_t^\alpha\}_{t>0}\right)$ satisfies the same boundedness. Therefore, in order to prove that, when $d>\beta$, the $\rho$-variation operator $V_\rho(\{T_t^\alpha\}_{t>0})$ is not bounded from $L^\sigma(\RR^d)$ to $L^{\sigma,\infty}(\RR^d)$ we are going to see that $T_*^{\alpha}$ is not bounded from $L^\sigma (\RR^d)$ to $L^{\sigma,\infty} (\RR^d)$.

Assume that $d>\beta$, that is, $\sigma>1$. We consider $f\in L^2(\RR^2)$, $f\geq 0$. According to \cite[(2.12) and (2.18)]{BDM}, for any $x\in \RR^d$ and $t>0$ we deduce that
\begin{align*}
    T_t^\alpha(f)(x)&\geq C\frac{1}{t^{1/\alpha}} \int_{\RR^d} \left(\frac{t^{1/\alpha}}{t^{1/\alpha}+|x-y|}\right)^{d+\alpha} \left(1\wedge \frac{|y|}{t^{1/\alpha}}\right)^{\beta-d} f(y) dy\\
    &\geq C\frac{1}{t^{1/\alpha}} \int_{B(0,|x|)} \left(\frac{t^{1/\alpha}}{t^{1/\alpha}+|x-y|}\right)^{d+\alpha} \left(1\wedge \frac{|y|}{t^{1/\alpha}}\right)^{\beta-d} f(y) dy.
\end{align*}
Since $|x-y|\leq 2|x|$ when $y\in B(0,|x|)$, we have
\begin{align}\label{eq: lower bound maximal}
    T_*^{\alpha}(f)(x)&\geq T_{|x|^\alpha}^{\alpha}(f)(x)\geq \frac{C}{|x|^d} \int_{B(0,|x|)} \left(\frac{|x|}{|x|+|x-y|}\right)^{d+\alpha} \left(1\wedge \frac{|y|}{|x|}\right)^{\beta-d} f(y) dy \nonumber \\
    &\geq \frac{C}{|x|^d} \int_{B(0,|x|)} \left(\frac{|y|}{|x|}\right)^{\beta-d} f(y) dy, \quad x\in \RR^d\setminus\{0\}.
\end{align}

We consider the function $g(x)=|x|^{\beta-d}$, $x\in \RR^d\setminus\{0\}$. We can see that $g\notin L^{\frac{d}{d-\beta}}(B(0,1))$. Thus, for every $n\in \NN$ we can find a positive measurable function $g_n\in L^{\frac{d}{\beta}}(B(0,1))\cap L^{2}(B(0,1))$ such that $\|g_n\|_{L^{\frac{d}{\beta}}(B(0,1))}\leq 1$ and $\int_{B(0,1)} |y|^{d-\beta} g_n(y) dy>n$. We define $g_n(y)=0$ for $|y|\geq 1$ and $n\in \NN$.

By \eqref{eq: lower bound maximal}, for every $n\in \NN$ we get
\[T_*^{\alpha}(g_n)(x)\geq \frac{C}{|x|^\beta}\int_{B(0,1)}g_n(x)|y|^{\beta-d} dy\geq C\frac{n}{|x|^\beta}, \quad x\in \RR^d\setminus\{0\}.\]
This yields 
\[T_*^{\alpha}(g_n)(x)\geq C(n-1), \quad 1<|x|<\left(\frac{n}{n-1}\right)^{1/\beta}, n\in \NN, \, n>1.\]
Then
\begin{align*}
    \left|\left\{x\in \RR^d: |T_*^\alpha(g_n)(x)|>C\frac{n-1}{2}\right\}\right|&\geq \int_{B\left(0,\left(\frac{n}{n-1}\right)^{1/\beta}\right)\setminus B(0,1)} dx\\
    &=|B(0,1)|\left(\left(\frac{n}{n-1}\right)^{d/\beta}-1\right), \quad n\in \NN,\, n>1.
\end{align*}

On the other hand, 
\[\left(\frac{\|g_n\|_{d/\beta}}{n-1}\right)^{d/\beta}\leq \frac{1}{(n-1)^{d/\beta}}, \quad n\in \NN,\, n>1.\]
Since the sequence $\{n^{d/\beta}-(n-1)^{d/\beta}\}_{n> 1}$ is not bounded, $T_*^{\alpha}$ is not bounded from $L^{d/\beta}(\RR^d)$ to $L^{d/\beta,\infty}(\RR^d)$.

We are going to see that the $\rho$-variation operator $V_\rho\left(\left\{t^k \partial_t^k \left(T_t^\alpha-W_t^\alpha\right)\right\}_{t>0}\right)$ is bounded from $L^1(\RR^d)$ to $L^{1,\infty}(\RR^d)$, when $\beta\ge d$. The following result actually holds for every $\rho\ge 1$.

\begin{prop}\label{prop: weak difference} Let $\alpha\in (1,2\wedge (d+2)/2)$, $\beta\in [d,d+\alpha)$, and $\kappa=\Phi(\beta)$, $k\in\NN\cup \{0\}$ and $\rho\geq 1$. The $\rho$-variation operator $V_{\rho}\left(\left\{t^{k} \partial_{t}^{k}\left(W_{t}^{\alpha}-T_t^\alpha)\right)\right\}_{t>0}\right)$ is bounded from $L^1(\RR^d)$ into $L^{1,\infty}(\RR^d)$.
\end{prop}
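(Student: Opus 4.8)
The plan is to adapt the $L^p$-argument of Proposition~\ref{prop: difference} to the endpoint $p=1$, using vector-valued Calderón--Zygmund theory with values in $E_\rho$, exactly as in the proof of Proposition~\ref{prop: weak type 1-1}. First I would recall from the proof of Proposition~\ref{prop: difference} that
\[
V_\rho\left(\{t^k\partial_t^k(W_t^\alpha-T_t^\alpha)\}_{t>0}\right)(f)(x)\leq k\,\TT_{k-1}(|f|)(x)+\TT_k(|f|)(x),
\]
so it suffices to prove that each operator $\TT_\ell$, $\ell\in\NN\cup\{0\}$, is bounded from $L^1(\RR^d)$ into $L^{1,\infty}(\RR^d)$ when $\beta\geq d$ (equivalently $\sigma=1\vee d/\beta=1$). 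Since $\TT_\ell$ has a positive kernel dominating the $E_\rho$-norm of the operator-valued kernel $t^\ell\partial_t^{\ell+1}(W_t^\alpha(x-y)-T_t^\alpha(x,y))$, the weak $(1,1)$ bound for $\TT_\ell$ yields the same for the genuine $\rho$-variation operator.

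Next I would invoke the kernel estimates already assembled in the proof of Proposition~\ref{prop: difference}: the local bound~\eqref{eq: bound Kloc} (after combining the $K_j$-estimates), giving
\[
\mathbb{K}_{j,\loc}(x,y)\lesssim \int_0^\infty \chi_{\Omega_\loc}(x,y,t)\left(\frac{|x|\vee|y|}{t^{1/\alpha}}\right)^{1-\alpha}\frac{1}{t^{d/\alpha}}\left(\frac{t^{1/\alpha}}{t^{1/\alpha}+|x-y|}\right)^{d+\alpha-\epsilon_1}\frac{dt}{t},
\]
and the global bound~\eqref{eq: bound Gl} in terms of $L_t^{\alpha-\epsilon}(x,y)$. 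The key point is that when $\beta\geq d$, the exponent $\beta-d\geq 0$, so the troublesome factor $(1\wedge|y|/t^{1/\alpha})^{\beta-d}$ is bounded and no longer obstructs integrability near $y=0$; this is precisely what lets $\sigma$ drop to $1$. I would then verify two things for the CZ machinery: (i) a uniform size estimate $\|\TT_\ell$-kernel$(x,y)\|\lesssim |x-y|^{-d}$ obtained by integrating the local and global bounds in $t$ (splitting $\Omega_\loc$, $\Omega_\glob$ and using $\beta\geq d$ to kill the $|y|$-singularity), and (ii) a Hörmander-type smoothness estimate in $x$ (and/or $y$), which follows by differentiating the kernel bounds — one extra power of $t^{-1/\alpha}$ in the $W^\alpha$-gradient estimates~\eqref{eq: size gradient W}, \eqref{eq: bound gradient-derivative W} produces the needed $|x-y|^{-d-1}$ decay. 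Together with the $L^2(\RR^d)$-boundedness of $V_\rho(\{t^k\partial_t^k(W_t^\alpha-T_t^\alpha)\}_{t>0})$, which follows from Theorem~\ref{thm: L2 var osc} and the $L^2$-boundedness of $V_\rho(\{t^k\partial_t^k W_t^\alpha\}_{t>0})$ via subordination, the vector-valued Calderón--Zygmund theorem gives the weak type $(1,1)$ estimate.

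The main obstacle will be establishing the Hörmander smoothness condition for the \emph{difference} kernel in the global region $\Omega_\glob$, since there $T_t^\alpha$ and $W_t^\alpha$ individually fail the global size bound and one must exploit cancellation between them; concretely, one needs that $t^\ell\partial_t^{\ell+1}\nabla_x(W_t^\alpha(x-y)-T_t^\alpha(x,y))$ still enjoys the $L_t^{\alpha-\epsilon}$-type control with an extra gain $|x-y|^{-1}$, and then that the resulting kernel integrates in $t$ over $\Omega_\glob$ to something $\lesssim |x-y|^{-d-1}$ when $\beta\geq d$. I expect this to reduce, after the scaling normalization $t=1$ as in the $L^p$-proof, to variants of \cite[Lemma~4.2]{BDM} with one more power of decay, so the estimate should go through with essentially the same Schur-type bookkeeping as on \cite[pp.~33--35]{BDM}; the condition $\beta\geq d$ enters exactly where the $L^p$-proof needed $p>d/\beta$. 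I would also note at the end that the corresponding weak type $(1,1)$ bound for the oscillation operator $O(\{t^k\partial_t^k(W_t^\alpha-T_t^\alpha)\}_{t>0},\{t_j\}_{j\in\NN})$ follows by the same argument, replacing $E_\rho$ by the mixed-norm space used in the proof of Proposition~\ref{prop: weak type 1-1}.
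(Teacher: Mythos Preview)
The approach differs from the paper's. Both start from the same reduction (already carried out in the proof of Proposition~\ref{prop: difference}) to the positive operators $\TT_\ell$, but the paper does \emph{not} run Calder\'on--Zygmund theory on this kernel. Instead it splits $\TT_\ell=\TT_{\ell,\loc}+\TT_{\ell,\glob}$ along $\Omega_{\loc},\Omega_{\glob}$ and treats each piece by direct, elementary estimates. The local part, via the bound~\eqref{eq: bound Kloc}, is bounded on every $L^p$, $1\le p\le\infty$, by a bare Schur test. For the global part the hypothesis $\beta\ge d$ makes the weight $(1+t^{1/\alpha}/|y|)^{d-\beta}$ bounded, so \cite[Proposition~2.9]{BDM} and \cite[Proposition~2.2]{BDQ} give a \emph{common} pointwise bound for $T_t^\alpha$ and $W_t^\alpha$; integrating in $t$ leaves a kernel dominated by $|y|^{-d}\chi_{\{|x-y|\le(|x|\wedge|y|)/2\}}$ plus $|x-y|^{-d}\chi_{\{|x-y|>(|x|\wedge|y|)/2\}}$. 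The first is again Schur-bounded on all $L^p$, while the second is majorized by $|x|^{-d}\chi_{\{|x|>|y|\}}+|y|^{-d}\chi_{\{|y|>|x|\}}$, hence by the Hardy--Littlewood maximal function plus an operator strongly bounded on $L^1$. No smoothness condition on the kernel is ever invoked.

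Your plan hinges on establishing a H\"ormander-type regularity estimate for the difference kernel, and you correctly flag this as the main obstacle---but you do not resolve it. Verifying it would require pointwise bounds on $\nabla_x T_t^\alpha(x,y)$ (or an integrated H\"ormander condition) for the \emph{non-symmetric, non-translation-invariant} kernel $T_t^\alpha$, and nothing of the sort is established or cited in the paper: the gradient estimates~\eqref{eq: size gradient W} and~\eqref{eq: bound gradient-derivative W} are for $W_t^\alpha$ only, and \cite[Lemma~4.2]{BDM} concerns the Duhamel expansion in the local region rather than global $x$-regularity of $T_t^\alpha$. Your expectation that it ``should go through'' is speculation at this point. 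The paper's route sidesteps the issue entirely, and since $\TT_\ell$ already has a positive scalar kernel there is no need to pass to $E_\rho$-valued integrals at all.
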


\begin{proof} As in Section~\ref{sec: difference} we consider the sets
\[\Omega_\loc =\left\{(x,y,t)\in \RR^d\times \RR^d\times (0,\infty): |y|\geq t^{1/\alpha}\text{ and }|x-y|\leq \frac{|x|\wedge|y|}{2}\right\}\]
and $\Omega_\glob =\left(\RR^d\times \RR^d\times (0,\infty)\right)\setminus \Omega_\loc$.

Also, for $\ell=0, 1, \dots, k$, we define the operators
\[\TT _{\ell, \loc}(f)(x)=\int_{\RR^d} f(y)\KK _{\ell,\loc}(x,y) dy, \quad x\in \RR^d\]
and 
\[\TT _{\ell, \glob}(f)(x)=\int_{\RR^d} f(y)\KK _{\ell,\glob}(x,y) dy, \quad x\in \RR^d,\]
where, for $x,y\in \RR^d$ and $t>0$,
\[\KK _{\ell,\loc}(x,y)=\int_0^\infty \chi_{\Omega_\loc}(x,y,t)\left|t^\ell \partial_t^{\ell+1}\left(T_t^\alpha(x,y)-W_t^\alpha(x,y)\right)\right| dt\]
and 
\[\KK _{\ell,\glob}(x,y)=\int_0^\infty \chi_{\Omega_\glob}(x,y,t)\left|t^\ell \partial_t^{\ell+1}\left(T_t^\alpha(x,y)-W_t^\alpha(x,y)\right)\right| dt.\]

Fix $\ell=0,1, \dots, k$. We shall first study the operator $\TT _{\ell, \glob}$. According to \cite[Proposition~2.9]{BDM} and \cite[Proposition 2.2]{BDQ}, since $\beta<d$ we deduce that
\[|t^\ell \partial_t^{\ell+1}\left(T_t^\alpha(x,y)-W_t^\alpha(x,y)\right)|\lesssim \frac{1}{t^{d/\alpha+1}}\left(\frac{t^{1/\alpha}}{t^{1/\alpha}+|x-y|}\right)^{d+\alpha-\epsilon}\]
for $x,y\in \RR^d$, $t>0$ and $0<\epsilon<1$. Therefore,
\begin{align*}
    \KK _{\ell,\glob}(x,y)&\lesssim \int_{|y|^{\alpha}}^\infty \frac{1}{t^{d/\alpha+1}} dt \chi_{\left\{|x-y|\leq \frac{|x|\wedge|y|}{2}\right\}}(x,y)\\
    &\quad +\int_0^\infty \frac{1}{t^{d/\alpha+1}} \left(\frac{t^{1/\alpha}}{t^{1/\alpha}+|x-y|}\right)^{d+\alpha-\epsilon} dt \chi_{\left\{|x-y|> \frac{|x|\wedge|y|}{2}\right\}}(x,y)\\
    &\lesssim \frac{1}{|y|^d} \chi_{\left\{|x-y|\leq \frac{|x|\wedge|y|}{2}\right\}}(x,y)+\frac{1}{|x-y|^d}\chi_{\left\{|x-y|> \frac{|x|\wedge|y|}{2}\right\}}(x,y)\\
    &:= \KK_{1,1}(x,y)+\KK_{1,2}(x,y), \quad x,y\in \RR^d.
\end{align*}

We consider, for $j=1,2$, the operators 
\[\TT_{1,j}(f)(x)=\int_{\RR^d} \KK_{1,j}(x,y) f(y)dy, \quad x\in \RR^d.\]
Since $|x-y|\leq \frac{|x|\wedge|y|}{2}$ implies $|x|/2<|y|<3|x|/2$, we get 
\[\int_{\RR^d} \KK_{1,1}(x,y) dy\lesssim \int_{|x|/2}^{3|x|/2} \frac{1}{\blue{r}} d\blue{r} \sim 1, \quad x\in \RR^d\]
and
\[\int_{\RR^d} \KK_{1,1}(x,y) dx\lesssim \frac{1}{|y|^d}\int_{B(0,2|y|)\setminus B(0,2|y|/3)} dx \sim 1, \quad y\in \RR^d.\]
Consequently, the operator $\TT_{1,1}$ is bounded on $L^p(\RR^d)$ for every $1\leq p\leq \infty$.

On the other hand,
\begin{align*}
    \KK_{1,2}(x,y)&\lesssim \frac{1}{|x-y|^d} \left(\chi_{\{|x|\geq 3|y|/2\}}(x,y)+\chi_{\{|y|/2<|x|<3|y|/2\}}(x,y)\right)\chi_{\{|y|< |x|\wedge 2|x-y|}\}(x,y)\\
    &\quad + \frac{1}{|x-y|^d} \left(\chi_{\{|y|\geq 3|x|/2\}}(x,y)+\chi_{\{|x|/2<|y|<3|x|/2\}}(x,y)\right)\chi_{\{|x|< |y|\wedge 2|x-y|}\}(x,y)\\
    &\lesssim \frac{1}{|x-y|^d} \left(\chi_{\{|x|> 3|y|/2\}}(x,y)+\chi_{\{|y|>3|x|/2\}}(x,y)\right)\\
    &\quad + \frac{1}{|x-y|^d}\left(\chi_{\{|y|< |x|\wedge 2|x-y|}\}(x,y)+\chi_{\{|x|< |y|\wedge 2|x-y|}\}(x,y)\right)\chi_{\{|y|/2<|x|<3|y|/2\}}(x,y)\\
    &\lesssim \frac{1}{|x|^d}\chi_{\{|x|>|y|\}}(x,y)+\frac{1}{|y|^d}\chi_{\{|y|>|x|\}}(x,y), \quad x,y\in \RR^d.
\end{align*}
It follows that
\begin{equation*}
    |\TT_{1,2}(f)(x)|\lesssim \frac{1}{|x|^d}\int_{B(x,2|x|)} |f(y)|dy+\int_{B(0,|x|)^c} \frac{|f(y)|}{|y|^d}dy =\TT_{1,2,1}(f)(x)+\TT_{1,2,2}(f)(x), \quad x\in \RR^d.
\end{equation*}
Since $\TT_{1,2,1}(f)\lesssim M_{\HL}(|f|)$, where $M_{\HL}$ denotes the centered Hardy-Littlewood maximal operator, $\TT_{1,2,1}$ is bounded from $L^1(\RR^d)$ to $L^{1,\infty}(\RR^d)$.

We also get
\begin{equation*}
    \int_{\RR^d} |\TT_{1,2,2}(f)(x)| dx\lesssim \int_{\RR^d} \int_{B(0,|x|)^c} \frac{|f(y)|}{|y|^d}dy dx\lesssim \int_{\RR^d} \frac{|f(y)|}{|y|^d} \int_{|x|<|y|} dx dy\lesssim \int_{\RR^d}|f(y)| dy
\end{equation*}
for every $f\in L^1(\RR^d)$. Hence, $\TT_{1,2,2}$ is bounded on $L^1(\RR^d)$.

We conclude that $T_{\ell,\glob}$ is bounded from $L^1(\RR^d)$ into $L^{1,\infty}(\RR^d)$.

We will now study the operator $T_{\ell,\loc}$. As it was proved in Section~\ref{sec: difference}, we have that
\begin{align*}
|T_{\ell,\loc}(f)(x)|
    &\lesssim \int_{\RR^d} |f(y)| \int_0^\infty \frac{1}{t^{d/\alpha+1}} \left(\frac{|x|\vee |y|}{t^{1/\alpha}}\right)^{1-\alpha} \left(\frac{t^{1/\alpha}}{t^{1/\alpha}+|x-y|}\right)^{d+\alpha-\epsilon} \chi_{\Omega_\loc} (t,x,y) dt dy\\
    &\lesssim \int_{B(x,|x|/2)} |f(y)| \int_0^\infty \frac{1}{t^{d/\alpha+1}} \left(\frac{|x|}{t^{1/\alpha}}\right)^{1-\alpha} \left(\frac{t^{1/\alpha}}{t^{1/\alpha}+|x-y|}\right)^{d+\alpha-\epsilon} dt dy\\
    &\lesssim \int_{B(x,|x|/2)} |f(y)| \frac{|x|^{1-\alpha}}{|x-y|^{d+1-\alpha}}\int_0^\infty \frac{1}{u^{(d+1)/\alpha}} \left(\frac{u^{1/\alpha}}{u^{1/\alpha}+1}\right)^{d+\alpha-\epsilon} du dy\\
    &\lesssim \int_{B(x,|x|/2)} |f(y)| \frac{|x|^{1-\alpha}}{|x-y|^{d+1-\alpha}} dy, \quad x\in \RR^d,
\end{align*}
provided that $\epsilon$ is small enough.

It is easy to see that
\[\int_{B(x,|x|/2)}  \frac{|x|^{1-\alpha}}{|x-y|^{d+1-\alpha}} dy\sim |x|^{1-\alpha} \int_{0}^{|x|/2} r^{\alpha-2} dr \sim 1, \quad x\in \RR^d,\]
and
\[\int_{B(y,3|y|/4)}  \frac{|y|^{1-\alpha}}{|x-y|^{d+1-\alpha}} dx\sim 1, \quad y\in \RR^d.\]
Then, by taking into account that  $2|y|/3\le|x|\le 3|y|/2$ provided that $(x,y,t)\in \Omega_{\loc}$, with $t>0$, the operator $T_{\ell,\loc}$ is bounded on $L^p(\RR^d)$ for every $1\leq p\leq \infty$. By combining all the above estimates we conclude that the operator $V_\rho\left(\left\{t^k \partial_t^k \left(T_t^\alpha-W_t^\alpha\right)\right\}_{t>0}\right)$ is bounded from $L^1(\RR^d)$ to $L^{1,\infty}(\RR^d)$.
\end{proof}

According to Proposition~\ref{prop: weak type 1-1}, the operator $V_\rho\left(\left\{t^k \partial_t^k W_t^\alpha\right\}_{t>0}\right)$ is bounded from $L^1(\RR^d)$ to $L^{1,\infty}(\RR^d)$ and thus $V_\rho\left(\left\{t^k \partial_t^k T_t^\alpha\right\}_{t>0}\right)$ is bounded from $L^1(\RR^d)$ to $L^{1,\infty}(\RR^d)$.

\section{Proof of Theorem~\ref{thm: var osc strong-weak} for the oscillation operators}\label{sec: endpoint osc}

We proceed as in the case of $\rho$-variation operators. Let $\{t_j\}_{j=1}^\infty$ be a decreasing sequence of positive numbers such that $\lim_{j\to \infty} t_j=0$, and let $k\in \NN$. We have that
\begin{align*}
	O&\left(\left\{t^k\partial_t^k\left(T_t^\alpha-W_t^\alpha\right)\right\}_{t>0},\{t_j\}_{j=1}^\infty\right)(f)\\
	&=\left(\sum_{j\in \NN} \sup_{t_{j+1}\leq \epsilon_{j+1}<\epsilon_j\leq t_j} \left|\left.t^k\partial_t^k\left(T_t^\alpha-W_t^\alpha\right)(f)\right|_{t=\epsilon_{j+1}}-\left.t^k\partial_t^k\left(T_t^\alpha-W_t^\alpha\right)(f)\right|_{t=\epsilon_j}\right|^2\right)^{1/2}\\
	&=\left(\sum_{j\in \NN} \sup_{t_{j+1}\leq \epsilon_{j+1}<\epsilon_j\leq t_j} \left|\int_{\epsilon_{j+1}}^{\epsilon_j} \partial_t \left(t^k\partial_t^k\left(T_t^\alpha-W_t^\alpha\right)(f)\right)dt\right|^2\right)^{1/2}\\
	&\leq \left(\sum_{j\in \NN} \left(\int_{t_{j+1}}^{t_j} \left|\partial_t \left(t^k\partial_t^k\left(T_t^\alpha-W_t^\alpha\right)(f)\right)\right|dt\right)^2\right)^{1/2}\\ 
	&\leq \sum_{j\in \NN} \int_{t_{j+1}}^{t_j} \left|\partial_t \left(t^k\partial_t^k\left(T_t^\alpha-W_t^\alpha\right)(f)\right)\right|dt\\ 
	&\leq \int_0^\infty \left|\partial_t \left(t^k\partial_t^k\left(T_t^\alpha-W_t^\alpha\right)(f)\right)\right|dt.
\end{align*}
Then, as in the case of the $\rho$-variation operators, we can prove that the oscillation operator of the difference $O\left(\left\{t^k\partial_t^k\left(T_t^\alpha-W_t^\alpha\right)\right\}_{t>0},\{t_j\}_{j=1}^\infty\right)$ is bounded on $L^p(\RR^d)$ for every $1\vee\frac d\beta<p<\infty$ and from $L^1(\RR^d)$ into $L^{1,\infty}(\RR^d)$, when $d\le \beta$.

As it was mentioned in Section~\ref{sec: intro}, from \cite[Corollary~6.2 and comment on the top of p.~2092]{LeMX1}, we deduce that the oscillation operator $O\left(\left\{t^k\partial_t^kW_t^\alpha\right\}_{t>0},\{t_j\}_{j=1}^\infty\right)$ is bounded on $L^p(\RR^d)$, for $1<p<\infty$. Also, according to Proposition~\ref{prop: weak type 1-1}, $O\left(\left\{t^k\partial_t^kW_t^\alpha\right\}_{t>0},\{t_j\}_{j=1}^\infty\right)$ is bounded from $L^1(\RR^d)$ into $L^{1,\infty}(\RR^d)$. 

Finally, we conclude that $O\left(\left\{t^k\partial_t^k T_t^\alpha\right\}_{t>0},\{t_j\}_{j=1}^\infty\right)$ is also bounded on $L^p(\RR^d)$ for every $1\vee\frac d\beta<p<\infty$ and from $L^1(\RR^d)$ into $L^{1,\infty}(\RR^d)$ when $d\le \beta$.

\section{Proof of Theorem~\ref{thm: jump strong-weak}}\label{sec: jump}

It is known (\cite[p.~6712]{JSW}) that, for every $\lambda>0$ and $\rho>2$,
\begin{equation}\label{eq: jump<var}
\lambda \left[J\left(\{t^k\partial_t^k T_t^\alpha\}_{t>0},\lambda\right)(f)\right]^{1/\rho}\leq 2^{1+\frac1\rho}V_\rho\left(\{t^k\partial_t^k T_t^\alpha\}_{t>0}\right)(f).
\end{equation}
From the boundedness property of the $\rho$-variation operators $V_\rho\left(\{t^k\partial_t^k T_t^\alpha\}_{t>0}\right)$, we deduce that the operator $\lambda \left[J\left(\{t^k\partial_t^k T_t^\alpha\}_{t>0},\lambda\right)\right]^{1/\rho}$ is bounded in $L^p(\RR^d)$, uniformly in $\lambda>0$, for every $1\vee \frac{d}{\beta}<p<\infty$ and from $L^1(\RR^d)$ into $L^{1,\infty}(\RR^d)$ when $d\le \beta$.

Since $\{W_t^\alpha\}_{t>0}$ is subordinated to the classical heat semigroup, it is a diffusion symmetric semigroup in the sense of Stein (\cite{StLP}). Then, according to \cite[(3)]{MSZ}, the family $\lambda \left[J\left(\{W_t^\alpha\}_{t>0}, \lambda\right)\right]^{1/2}$ is bounded in $L^p(\RR^d)$ for every $1<p<\infty$.

On the other hand, for every $\lambda>0$, 
\begin{align*}
	\lambda \left[J\left(\{T_t^\alpha\}_{t>0}, \lambda\right)(f)\right]^{1/2}&\lesssim \lambda \left[J\left(\{T_t^\alpha-W_t^\alpha\}_{t>0}, \tfrac{\lambda}{2}\right)(f)\right]^{1/2}+\lambda \left[J\left(\{W_t^\alpha\}_{t>0}, \tfrac{\lambda}{2}\right)(f)\right]^{1/2}\\
	&\lesssim V_2\left(\{T_t^\alpha-W_t^\alpha\}_{t>0}\right)(f)+\lambda \left[J\left(\{W_t^\alpha\}_{t>0}, \tfrac{\lambda}{2}\right)(f)\right]^{1/2}.
\end{align*}
According to Proposition~\ref{prop: difference} we deduce that the operator
$\{\lambda \left[J\left(\{T_t^\alpha\}_{t>0}, \lambda\right)\right]^{1/2}$ is bounded in $L^p(\RR^d)$, uniformly in $\lambda>0$, provided that $1\vee \frac{d}{\beta}<p<\infty$.

\section{Weighted inequalities}\label{sec: weighted}

We are going to prove the result for the $\rho$-variation operators $V_\rho\left(\{t^k\partial_t^k T_t^\alpha\}_{t>0}\right)$.  The same arguments work for the oscillation operator $O\left(\{t^k\partial_t^k T_t^\alpha\}_{t>0}, \{t_j\}_{j\in \NN}\right)$. The result for the jump operator can be deduced from the one for the $\rho$-variation operator by using (\ref{eq: jump<var}).

According to Theorem~\ref{thm: var osc strong-weak}\ref{itm: var osc strong}, the $\rho$-variation operator $V_\rho\left(\{t^k\partial_t^k T_t^\alpha\}_{t>0}\right)$ is bounded on $L^p(\RR^d)$ provided that $1\vee \frac{d}{\beta}<p<\infty$.

In order to prove Theorem~\ref{thm: weighted Lp}, we shall use \cite[Theorem~6.6]{BZ} (see also \cite[Theorem~2.7]{Na23}) and we adapt the proof of \cite[Proposition~3.5]{Na23}.

Let $1\vee \frac{d}{\beta}<p_0<q_0<\infty$. If $B$ is a ball in $\RR^d$ we define, for every $j\in \NN$, the annuli $S_j(B)=2^jB\setminus 2^{j-1}B$, and $S_0(B)=B$.

We consider $m\in \NN$ to be fixed later, and define, for every $t>0$, the operator
\[A_t=(I-T_t^\alpha)^m.\]
Our objective is to prove the following properties for $A_t$.

\begin{lem}\label{lem: aux weighted}
	For every ball $B=B(x_B, r_B)$ with $x_B\in \RR^d$ and $r_B>0$, and $j\in \NN\cup\{0\}$, there exist numbers $c_j(B)>0$ such that
	\begin{equation}\label{eq: cj sumables}
		\sup_{B} \sum_{j=0}^\infty c_j(B)2^{\alpha j}<\infty, 
	\end{equation} 
	and, for each function $f$ supported in $B$,
	\begin{enumerate}[label=(\roman*)]
		\item \label{itm: aux weighted i}for every $j\in \NN$, $j\geq 2$,
		\[\left(\frac{1}{|S_j(B)|}\int_{S_j(B)} \left|V_\rho\left(\{t^k\partial_t^k T_t^\alpha\}_{t>0}\right)(A_{r_B^\alpha}f)(x)\right|^{q_0}dx\right)^{1/q_0}\leq c_j(B)\left(\frac{1}{|B|}\int_B |f(x)|^{q_0}dx\right)^{1/q_0}.\]
		\item \label{itm: aux weighted ii} for every $j\in \NN\cup \{0\}$,
		\[\left(\frac{1}{|S_j(B)|}\int_{S_j(B)} \left|(I-A_{r_B^\alpha})(f)(x)\right|^{q_0}dx\right)^{1/q_0}\leq c_j(B)\left(\frac{1}{|B|}\int_B |f(x)|^{p_0}dx\right)^{1/p_0}.\]
	\end{enumerate}
\end{lem}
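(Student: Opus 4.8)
The plan is to reduce both bounds to pointwise estimates on the kernels involved, using the semigroup law $T_u^\alpha T_v^\alpha=T^\alpha_{u+v}$ together with the pointwise derivative estimates for $\partial_t^\ell T_t^\alpha(x,y)$ from \cite[Proposition~2.9]{BDM} (quoted before Theorem~\ref{thm: var osc strong-weak} and instantiated in \eqref{eq: bound derivative T}). Two elementary facts organize everything. First, exactly as in Sections~\ref{sec: difference} and \ref{sec: endpoint var}, inequality \eqref{eq: ineq var-deriv} gives, for any family $g$,
\[
V_\rho\bigl(\{t^k\partial_t^kT_t^\alpha\}_{t>0}\bigr)(g)(x)\le\int_0^\infty\bigl|\partial_t\bigl(t^k\partial_t^kT_t^\alpha g\bigr)(x)\bigr|\,dt ,
\]
so \ref{itm: aux weighted i} follows once we control the $L^1_t$-integral of the kernel of $\partial_t(t^k\partial_t^kT_t^\alpha A_{r_B^\alpha})$. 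Second, the binomial expansion and the semigroup law give $A_s=(I-T_s^\alpha)^m=\sum_{l=0}^m\binom ml(-1)^lT_{ls}^\alpha$, whence $t^k\partial_t^kT_t^\alpha A_s$ has kernel $t^k(\Delta_s^mg_{x,y})(t)$, where $g_{x,y}(u)=\partial_u^kT_u^\alpha(x,y)$ and $\Delta_sh(u)=h(u)-h(u+s)$; consequently $|(\Delta_s^mg_{x,y})(t)|$ is bounded both by $2^m\sup_{\xi\in[t,t+ms]}|\partial_\xi^kT_\xi^\alpha(x,y)|$ (crude expansion) and by $s^m\sup_{\xi\in[t,t+ms]}|\partial_\xi^{k+m}T_\xi^\alpha(x,y)|$ (iterated fundamental theorem of calculus), and likewise with $k$ replaced by $k+1$, which is what is needed for the term in which $\partial_t$ hits the factor $t^k$.

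For \ref{itm: aux weighted ii} I would write $I-A_{r_B^\alpha}=\sum_{l=1}^m\binom ml(-1)^{l+1}T_{lr_B^\alpha}^\alpha$, a finite combination of semigroup operators at times $\sim r_B^\alpha$, and use \cite[Proposition~2.9]{BDM} with $\ell=0$ to bound its kernel, for $f$ supported in $B=B(x_B,r_B)$, by $Cr_B^{-d}\bigl(\tfrac{r_B}{r_B+|x-y|}\bigr)^{d+\alpha-\epsilon}\bigl(1+\tfrac{r_B}{|y|}\bigr)^{d-\beta}$. For $x\in S_j(B)$ with $j\ge2$ one has $|x-y|\sim 2^jr_B$, producing the gain $2^{-j(d+\alpha-\epsilon)}$; for $j=0,1$ one simply discards this factor. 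Hölder's inequality with exponent $p_0$ in the $y$-integral, combined with the estimate
\[
\int_B\Bigl(1+\frac{r_B}{|y|}\Bigr)^{(d-\beta)_+p_0'}\,dy\lesssim r_B^d\sim|B| ,
\]
which holds precisely because $p_0>1\vee\frac d\beta$ forces $(d-\beta)_+p_0'<d$ with the right homogeneity in $r_B$, then yields \ref{itm: aux weighted ii} with $c_j(B)=C$ for $j\le1$ and $c_j(B)=C2^{-j(d+\alpha-\epsilon)}$ for $j\ge2$. Choosing $\epsilon<d$ makes \eqref{eq: cj sumables} hold, uniformly in $B$.

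For \ref{itm: aux weighted i} (only $j\ge2$ is needed) I would estimate $\mathcal M(x,y):=\int_0^\infty|\partial_t(t^k\partial_t^kT_t^\alpha A_{r_B^\alpha})(x,y)|\,dt$ for $x\in S_j(B)$, $y\in B$, splitting the $t$-integral at $t=r_B^\alpha$. For $t>r_B^\alpha$ I would use the cancellation bound for $\Delta_{r_B^\alpha}^m$ (with $k+m$ and $k+m+1$ derivatives), insert \cite[Proposition~2.9]{BDM}, note that $\xi\sim t$ in that range, and integrate: the prefactor $r_B^{\alpha m}$ together with the decay of the kernel in $\xi^{1/\alpha}/|x-y|$ yields a bound $\lesssim 2^{-j\gamma}r_B^{-d}\bigl(1+\tfrac{r_B}{|y|}\bigr)^{d-\beta}$ for some $\gamma>\alpha$. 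For $0<t\le r_B^\alpha$ I would use the crude bound for $\Delta_{r_B^\alpha}^m$ instead; here every relevant scale satisfies $\xi^{1/\alpha}\le (m+1)^{1/\alpha}r_B\ll 2^jr_B\sim|x-y|$, so the spatial factor in \cite[Proposition~2.9]{BDM} is comparable to $(\xi^{1/\alpha}/2^jr_B)^{d+\alpha-\epsilon}$, whose positive power of $\xi$ makes the $t$-integral convergent near $t=0$ and again produces a bound $\lesssim 2^{-j(d+\alpha-\epsilon)}r_B^{-d}\bigl(1+\tfrac{r_B}{|y|}\bigr)^{d-\beta}$. Since $V_\rho(\{t^k\partial_t^kT_t^\alpha\})(A_{r_B^\alpha}f)(x)\le\int_B\mathcal M(x,y)|f(y)|\,dy$, Hölder with exponent $q_0$ and the same integrability estimate (now with $q_0>1\vee\frac d\beta$) give \ref{itm: aux weighted i} with $c_j(B)=C2^{-j\gamma}$. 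Enlarging $C$ so that the sequence also dominates the one from \ref{itm: aux weighted ii}, and using $\gamma>\alpha$, one obtains a single family $\{c_j(B)\}$ satisfying \eqref{eq: cj sumables} uniformly in $B$.

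The main difficulty, and the point where the nonlocal and nonsymmetric nature of $\Lambda^\alpha_\kappa$ enters, is the local weight $\bigl(1+t^{1/\alpha}/|y|\bigr)^{d-\beta}$ at the origin in the regime $\beta<d$ (i.e.\ $\sigma=d/\beta>1$): one must carefully track the three competing scales $t^{1/\alpha}$, $r_B$ and $2^jr_B$ inside the $t$-integral so that the $m$-fold difference $\Delta^m_{r_B^\alpha}$ genuinely yields decay faster than $2^{-\alpha j}$ while the singular factor remains $p_0'$- (respectively $q_0'$-) integrable over $B$ with homogeneity $r_B^d$ — this is exactly the step that forces the range $p>1\vee d/\beta$. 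Everything else (differentiation under the integral sign, absolute continuity in $t$, convergence of the $t$-integrals) is routine and proceeds as in Section~\ref{sec: difference}.
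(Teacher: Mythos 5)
Your proposal is correct and follows the same overall architecture as the paper's proof: expand $I-A_{r_B^\alpha}$ and $A_{r_B^\alpha}$ via the binomial theorem and the semigroup law, reduce $V_\rho$ to an $L^1_t$-integral of $\partial_t(t^k\partial_t^k T_t^\alpha A_{r_B^\alpha})$ using \eqref{eq: ineq var-deriv}, split that integral at $t=r_B^\alpha$, exploit the $m$-fold cancellation in $(I-T_{r_B^\alpha}^\alpha)^m$ to gain $r_B^{\alpha m}$ when $t>r_B^\alpha$, and sum in $j$. Two implementation details differ from the paper. First, for the cancellation the paper writes $(I-T_{r_B^\alpha}^\alpha)^m f=(-1)^m\int_0^{r_B^\alpha}\!\!\cdots\!\int_0^{r_B^\alpha}\partial_u^m T_{s_1+\cdots+s_m}^\alpha f\,ds_1\cdots ds_m$ and uses this iterated integral in \emph{both} regimes $t\lessgtr r_B^\alpha$, whereas you use the finite-difference operator $\Delta_{r_B^\alpha}^m$ with a crude $2^m$-bound for $t\le r_B^\alpha$ and the iterated-FTC $s^m$-bound only for $t>r_B^\alpha$; these are equivalent, and your version is arguably cleaner for $t$ small. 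Second, and more substantively, the paper estimates each annular piece via the weighted off-diagonal $L^{p_0}\to L^{q_0}$ bounds of \cite[Theorem~2.2]{BDM}, which carry the full $(1+2^j r_B/t^{1/\alpha})^{-d-\beta}$ decay and produce $c_j(B)\sim 2^{-j(d+\beta)}$; you instead work only with the pointwise kernel estimate \cite[Proposition~2.9]{BDM} and apply H\"older in $y$ afterwards, which yields the slightly weaker decay $c_j(B)\sim 2^{-j(d+\alpha-\epsilon)}$. Both choices satisfy $\sum_j c_j(B)2^{\alpha j}<\infty$ (for $\epsilon<d$), so your route is perfectly adequate for Lemma~\ref{lem: aux weighted} and is self-contained at the kernel level, at the modest cost of a less sharp exponent in the off-diagonal decay. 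You correctly isolate the role of $p_0,q_0>1\vee d/\beta$: it ensures $\int_B(1+r_B/|y|)^{(d-\beta)_+ s'}dy\lesssim r_B^d$ for $s\in\{p_0,q_0\}$, uniformly in $B$, which is precisely where the singular weight at the origin is absorbed.
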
 

\begin{proof}
	 We will show first \ref{itm: aux weighted ii}. Let $B=B(x_B,r_B)$ be a ball in $\RR^d$ and note that
	\[I-A_{r_B^\alpha}=I-(I-T_{r_B^\alpha}^\alpha)^m=\sum_{k=1}^m \binom{m}{k} T_{kr_B^\alpha}^\alpha.\]
	
	Suppose $f$ is a function supported on $B$, and let $j\in \NN$, $j\geq 2$. According to \cite[(2.12) and Theorem~2.2]{BDM}, we get
	\begin{align*}
		&\left(\frac{1}{|S_j(B)|}\int_{S_j(B)} \left|T_{kr_B^\alpha}^\alpha(f)(x)\right|^{q_0}dx\right)^{1/q_0}\\
		&\qquad\qquad\lesssim \left( \left(\frac{1}{k^{1/\alpha}}\right)^{d/p_0}\vee \left(\frac{1}{k^{1/\alpha}}\right)^{d}\right) \left(1+\frac{2^j}{k^{1/\alpha}}\right)^{-d-\beta} \left(\frac{1}{|B|}\int_B |f(x)|^{p_0}dx\right)^{1/p_0}\\
		&\qquad\qquad\lesssim 2^{-j(d+\beta)} \left(\frac{1}{|B|}\int_B |f(x)|^{p_0}dx\right)^{1/p_0}, \quad k=1,\dots, m.
	\end{align*}
	Then, for every $j\geq 2$, 
	\[\left(\frac{1}{|S_j(B)|}\int_{S_j(B)} \left|(I-A_{r_B^\alpha})(f)(x)\right|^{q_0}dx\right)^{1/q_0}\lesssim 2^{-j(d+\beta)}\left(\frac{1}{|B|}\int_B |f(x)|^{p_0}dx\right)^{1/p_0}.\]
	
    Also, for $j=0, 1$, we can follow the proof of \cite[Theorem~2.2]{BDM} to get that
    \begin{align*}
        \left(\frac{1}{|S_j(B)|}\int_{S_j(B)} \left|T_{kr_B^\alpha}^\alpha(f)(x)\right|^{q_0}dx\right)^{1/q_0}&\lesssim  \left( \left(\frac{1}{k^{1/\alpha}}\right)^{d/p_0}\vee \left(\frac{1}{k^{1/\alpha}}\right)^{d}\right) \left(\frac{1}{|B|}\int_B |f(x)|^{p_0}dx\right)^{1/p_0}\\
        &\lesssim \left(\frac{1}{|B|}\int_B |f(x)|^{p_0}dx\right)^{1/p_0}.
    \end{align*}
    Hence,
	\[\left(\frac{1}{|S_j(B)|}\int_{S_j(B)} \left|(I-A_{r_B^\alpha})(f)(x)\right|^{q_0}dx\right)^{1/q_0}\lesssim  \left(\frac{1}{|B|}\int_B |f(x)|^{p_0}dx\right)^{1/p_0},\]
	which proves the desired estimate with $c_j(B)=2^{-j(d+\beta)}$. It clearly verifies \eqref{eq: cj sumables} since $\alpha-\beta-d<0$. 
	
	Let us now prove \ref{itm: aux weighted i}. Consider $j\in \NN$, $j\geq 2$, $B$ and $f$ as before. We define, for $\ell\in \NN$, the operator
	\[\mathcal{S}_\ell (f)(x)=\int_0^\infty t^{\ell-1} \left|\partial_t^\ell T_t^\alpha(f)(x)\right| dt, \quad x\in \RR^d.\]
	
	It will suffice to show that, for some $\gamma>0$,
	\begin{equation}\label{eq: bound Sl}
		\left(\frac{1}{|S_j(B)|}\int_{S_j(B)} \left|\mathcal{S}_\ell (A_{r_B^\alpha}(f))(x)\right|^{q_0}dx\right)^{1/q_0}\lesssim 2^{-j(d+\gamma)} \left(\frac{1}{|B|}\int_B |f(x)|^{q_0}dx\right)^{1/q_0}.
	\end{equation}
	
	By Minkowski's inequality, we split the right-hand side as follows
	\begin{align*}
		\frac{1}{|S_j(B)|}\int_{S_j(B)} \left|\mathcal{S}_\ell (A_{r_B^\alpha}(f))(x)\right|^{q_0}dx&\lesssim \int_0^\infty \left(	\frac{1}{|S_j(B)|}\int_{S_j(B)} |t^{\ell-1}\partial_t^\ell T_t^\alpha(A_{r_B^\alpha} f)(x)|^{q_0}dx\right)^{1/q_0} dt\\
		&\lesssim \int_0^{r_B^\alpha} \left(	\frac{1}{|S_j(B)|}\int_{S_j(B)} |t^{\ell-1}\partial_t^\ell T_t^\alpha(A_{r_B^\alpha} f)(x)|^{q_0}dx\right)^{1/q_0} dt\\
		&\quad +\int_{r_B^\alpha}^\infty \left(	\frac{1}{|S_j(B)|}\int_{S_j(B)} |t^{\ell-1}\partial_t^\ell T_t^\alpha(A_{r_B^\alpha} f)(x)|^{q_0}dx\right)^{1/q_0} dt\\
		&:=F_1+F_2.
	\end{align*}
	
	We can write
	\begin{align*}
		F_1&=\int_0^{r_B^\alpha} \left(	\frac{1}{|S_j(B)|}\int_{S_j(B)} \left|t^{\ell-1}\int_0^{r_B^\alpha}\cdots \int_0^{r_B^\alpha}\partial_t^{\ell+m} T_{t+s_1+\dots+s_m}^\alpha(f)(x)ds_1\dots ds_m\right|^{q_0}dx\right)^{1/q_0} dt\\
		&\leq \int_0^{r_B^\alpha} \int_0^{r_B^\alpha}\cdots \int_0^{r_B^\alpha}t^{\ell-1}\left(\frac{1}{|S_j(B)|}\int_{S_j(B)}\left|\partial_t^{\ell+m} T_{t+s_1+\dots+s_m}^\alpha(f)(x) \right|^{q_0}dx\right)^{1/q_0} ds_1\dots ds_m dt. 
	\end{align*}
	
	Using \cite[Proposition~2.9 and Theorem~2.2]{BDM}, we get
	\begin{align*}
		F_1&\lesssim  \int_0^{r_B^\alpha} \int_0^{r_B^\alpha}\cdots \int_0^{r_B^\alpha} \frac{t^{\ell-1}}{(t+s_1+\dots+s_m)^{\ell+m}} \left(1+\frac{2^j r_B}{(t+s_1+\dots+s_m)^{1/\alpha}}\right)^{-d-\beta}\\
		&\quad \times \left(\frac{r_B}{(t+s_1+\dots+s_m)^{1/\alpha}}\right)^d ds_1\dots ds_m dt \left(\frac{1}{|B|}\int_B |f(x)|^{q_0}dx\right)^{1/q_0}\\
		&\lesssim \int_0^{r_B^\alpha} \int_0^{r_B^\alpha}\cdots \int_0^{r_B^\alpha} \frac{2^{-j(d+\beta)}r_B^{-\beta}}{(t+s_1+\dots+s_m)^{m+1-\beta/\alpha}} ds_1\dots ds_m dt \left(\frac{1}{|B|}\int_B |f(x)|^{q_0}dx\right)^{1/q_0}.
	\end{align*}
	
	Taking $m>\beta/\alpha-1$, we have
	\[(t+s_1+\dots+s_m)^{-(m+1-\beta/\alpha)}\leq t^{-\frac{m+1-\beta/\alpha}{m+1}}s_1^{-\frac{m+1-\beta/\alpha}{m+1}}\cdots s_m^{-\frac{m+1-\beta/\alpha}{m+1}}, \quad t,s_1, \dots, s_m>0.\]
	Hence,
	\begin{align*}
		F_1&\lesssim 2^{-j(d+\beta)}r_B^{-\beta} \left(\int_0^{r_B^\alpha}t^{-\frac{m+1-\beta/\alpha}{m+1}} dt\right)^{m+1} \left(\frac{1}{|B|}\int_B |f(x)|^{q_0}dx\right)^{1/q_0}\\
		&\lesssim 2^{-j(d+\beta)}\left(\frac{1}{|B|}\int_B |f(x)|^{q_0}dx\right)^{1/q_0}.
	\end{align*}
	
	We now study $F_2$. We can proceed in a similar way to obtain
	\begin{align*}
		F_2&\lesssim \int_{r_B^\alpha}^\infty \int_0^{r_B^\alpha}\cdots \int_0^{r_B^\alpha}\frac{t^{\ell-1}}{(t+s_1+\dots+s_m)^{\ell+m}} \left(1+\frac{2^j r_B}{(t+s_1+\dots+s_m)^{1/\alpha}}\right)^{-d-\beta}\\
		&\quad \times \left(\frac{r_B}{(t+s_1+\dots+s_m)^{1/\alpha}}\right)^d ds_1\dots ds_m dt \left(\frac{1}{|B|}\int_B |f(x)|^{q_0}dx\right)^{1/q_0}\\
		&\lesssim \int_{r_B^\alpha}^{(2^jr_B)^\alpha} \frac{1}{t^{m+1}} \left(\frac{2^j r_B}{t^{1/\alpha}}\right)^{-d-\beta} \left(\frac{r_B}{t^{1/\alpha}}\right)^d dt\  r_B^{\alpha m} \left(\frac{1}{|B|}\int_B |f(x)|^{q_0}dx\right)^{1/q_0}\\
		&\quad + \int_{(2^jr_B)^\alpha}^\infty \frac{1}{t^{m+1}} \left(\frac{ r_B}{t^{1/\alpha}}\right)^{d}  dt \ r_B^{\alpha m} \left(\frac{1}{|B|}\int_B |f(x)|^{q_0}dx\right)^{1/q_0}\\
		&\lesssim \left(\frac{1}{|B|}\int_B |f(x)|^{q_0}dx\right)^{1/q_0}\left(2^{-j(d+\beta)}r_B^{\alpha m-\beta}\int_{r_B^\alpha}^{(2^jr_B)^\alpha} t^{-(m+1-\beta/\alpha)} dt+r_B^{\alpha m+d}\int_{(2^jr_B)^\alpha}^\infty \frac{dt}{t^{m+1+d/\alpha}}\right)\\
		&\lesssim \left(2^{-j(d+\alpha m)}+2^{-j(d+\beta)}\right)\left(\frac{1}{|B|}\int_B |f(x)|^{q_0}dx\right)^{1/q_0},
	\end{align*}
    provided that $m>\beta/\alpha$. Thus, choosing $m>\beta/\alpha$, both estimates for $F_1$ and $F_2$ are satisfied.
	
	Therefore, for $\gamma= \alpha m \wedge \beta=\beta$, inequality \eqref{eq: bound Sl} holds, and property \ref{itm: aux weighted i} is proved.
\end{proof}

A weight $w$ in $\RR^d$ is said to be in the reverse H\"older class $RH_q(\RR^d)$, with $1<q<\infty$, when there exists $C>0$ such that, for every ball $B\subset\RR^d$, 
\begin{equation*}
  \left( \frac{1}{|B|} \int_B w^q \right)^{1/q}
  \leq \frac{C}{|B|} \int w.
\end{equation*}

From Lemma \ref{lem: aux weighted} and according to \cite[Theorem~6.6]{BZ} (see also \cite[Theorem~2.7]{Na23}), the operator $V_\rho\left(\{t^k\partial_t^k T_t^\alpha\}_{t>0}\right)$ is bounded on $L^p(\RR^d,w)$ provided that $w\in A_{p/p_0}\cap RH_{(q_0/p)'}$ for $1\vee \frac{d}{\beta} <p_0<q_0<\infty$. Actually, this is the same as $w\in A_{p/(1\vee \frac{d}{\beta})}$.

Indeed, set $a_0=1\vee \frac{d}{\beta}$. If $w\in A_{p/p_0}\cap RH_{(q_0/p)'}\subseteq A_{p/p_0}$, since for any $a_0<p_0<p$, $A_{p/p_0}\subseteq A_{p/a_0}$, it is immediate that $w\in A_{p/a_0}$.

Suppose now that $w\in A_{p/a_0}$ for $a_0<p<\infty$. Then, there exists $\delta>0$ such that $w\in A_{p/a_0-\delta}$. We choose $p_0\in (a_0,p)$ verifying $p/p_0>p/a_0-\delta$. Thus, $A_{p/a_0-\delta}\subseteq A_{p/p_0}$ and this yields $w\in A_{p/p_0}$. 

On the other hand, since  $w\in A_{p/a_0}$, we know that there exists $\eta>1$ for which $w\in RH_\eta$. We take $q_0\in (p,\infty)$ such that $1\leq (q_0/p)'<\eta$. Since $RH_\eta \subseteq RH_{(q_0/p)'}$, $w\in RH_{(q_0/p)'}$. 

Therefore, it is proved that $w\in A_{p/p_0}\cap RH_{(q_0/p)'}$ for $1\vee d/\beta <p_0<q_0<\infty$ if and only if $w\in A_{p/(1\vee \frac{d}{\beta})}$.

\section{Proof of Theorem~\ref{thm: necessity rho>2}}

We recall that, for every $\alpha \in (0,2)$, the $\frac{\alpha}{2}$--power of the Laplacian operator can be defined by the Fourier transformation as follows
\[(-\Delta)^{\alpha/2}=\left(|z|^\alpha \hat{f}(z)\right)^{\vee}, \quad f\in S(\RR^d).\]
Thus, $(-\Delta)^{\alpha/2}$ maps $S(\RR^d)$ into $L^2(\RR^d)$.

The operator $-(-\Delta)^{\alpha/2}$ generates in $L^2(\RR^d)$ the semigroup of operators $\{W_t^\alpha\}_{t>0}$ where, for every $t>0$ and $f\in L^2(\RR^d)$,
\[W_t^\alpha(f)=\left(e^{-t|z|^\alpha} \hat{f}(z)\right)^{\vee}.\]

We consider $\phi_\alpha=(e^{-|z|^\alpha})^\vee$. If, for every $t>0$ and $x\in \RR^d$, we set $\phi_{\alpha,t}(x)=\frac{1}{t^\alpha}\phi_\alpha(x/t)$, then $\widehat{\phi_{\alpha,t}}(y)=\widehat{\phi_\alpha}(ty)=e^{-(t|y|)^\alpha}$, for $t>0$ and $y\in \RR^d$. Hence, $\widehat{\phi_{\alpha,t^{1/\alpha}}}(y)=e^{-t|y|^\alpha}$, for every  $t>0$ and $y\in \RR^d$, and we can write
\[W_t^\alpha(f)=\phi_{\alpha,t^{1/\alpha}}*f, \quad f\in L^2(\RR^d), \quad t>0.\]

We now consider the functions $g_i$, $i=1,\dots, d$, as in \cite[p.~32]{CCSj}. For $i=2, \dots, d$ they are given by $g_i(x)=\chi_{[1,2]}(x)$, whereas $g_1$ will be expressed using the Rademacher functions $\{r_k\}_{k=1}^\infty$ supported on $[0,1]$. These functions are defined by
\[r_k=\sum_{j=1}^{2^k-1}\left(\chi_{((2j-2)2^{-k},(2j-1)2^{-k})}-\chi_{((2j+1)2^{-k},2j\ 2^{-k})}\right), \quad k\in \NN.\]
For every $k\in \NN$, we consider the function $q_k$ given by
\[q_k(u)=r_k(u+1)+r_k(u)+r_k(u-1), \,\,\,u\in \mathbb R.\]
Note that, for each $k\in \NN$, $\supp(q_k)\subset [-1,2]$ and $q_k(u)=r_k(u)$ when $u\in (0,1)$.

Let $N\in \NN$ and consider the set $I_N=\{\ell\in \NN: 2N<\ell\leq 3N\}$. The function $g_1$ will be the function
\[g_{1,N}=\sum_{k\in I_N} q_k\]
for $N$ large enough.

By Khinchine's inequality, for every $1<p<\infty$,
\begin{equation}\label{eq: cota norma p}
	\|g_{1,N}\|_{L^p(\RR)}\lesssim \sqrt{N},
\end{equation}
Besides, we easily have that 
\begin{equation}\label{eq: cota puntual g1N}
	|g_{1,N}(u)|\lesssim N, \quad u\in \RR.
\end{equation}

\begin{lem}\label{lem: bound W-1}
	Let $\ell\in \NN$ and $f(x)=\prod_{j=1}^\ell \chi_{[-1,2]}(x_j)$, $x=(x_1,\dots, x_\ell)\in \RR^\ell$. There exists $C>0$ such that
	\begin{equation*}
		|W_t^\alpha(f)(x)-1|\leq Ct, \quad x\in [0,1]^\ell, \quad t>0.
	\end{equation*}
\end{lem}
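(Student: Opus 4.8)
The plan is to exploit the convolution representation $W_t^\alpha(f) = \phi_{\alpha, t^{1/\alpha}} * f$ together with the fact that $\int_{\RR^\ell}\phi_\alpha = \widehat{\phi_\alpha}(0) = 1$, so that
\[
W_t^\alpha(f)(x) - 1 = \int_{\RR^\ell} \phi_{\alpha, t^{1/\alpha}}(y)\bigl(f(x-y) - 1\bigr)\,dy .
\]
Since $f$ is the indicator of the cube $[-1,2]^\ell$ and $x \in [0,1]^\ell$, we have $f(x-y) - 1 = 0$ whenever $|y_j| \le 1$ for all $j$, hence the integrand is supported in the region $R = \{y : \max_j |y_j| > 1\}$, and there $|f(x-y)-1| \le 1$. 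Therefore
\[
|W_t^\alpha(f)(x) - 1| \le \int_{R} |\phi_{\alpha, t^{1/\alpha}}(y)|\,dy \le \sum_{j=1}^\ell \int_{\{|y_j| > 1\}} |\phi_{\alpha, t^{1/\alpha}}(y)|\,dy .
\]
By the scaling $\phi_{\alpha, t^{1/\alpha}}(y) = t^{-\ell/\alpha}\phi_\alpha(t^{-1/\alpha} y)$ and the change of variables $y \mapsto t^{1/\alpha} y$, each term equals $\int_{\{|y_j| > t^{-1/\alpha}\}} |\phi_\alpha(y)|\,dy$.

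The key analytic input is the decay of the kernel $\phi_\alpha$: for $\alpha \in (0,2)$ it is classical that $\phi_\alpha$ is smooth and satisfies $|\phi_\alpha(y)| \lesssim (1+|y|)^{-d-\alpha}$ on $\RR^d$ (here with $d=\ell$; this is the standard fractional heat kernel bound at $t=1$, and in fact a sharper statement already appears implicitly in \cite[Proposition~2.2]{BDQ}). Consequently, for $t \le 1$ (so $t^{-1/\alpha} \ge 1$),
\[
\int_{\{|y_j| > t^{-1/\alpha}\}} |\phi_\alpha(y)|\,dy \lesssim \int_{\{|y| > t^{-1/\alpha}\}} \frac{dy}{|y|^{\ell+\alpha}} \lesssim \bigl(t^{-1/\alpha}\bigr)^{-\alpha} = t,
\]
which gives $|W_t^\alpha(f)(x) - 1| \le C\ell\, t = Ct$ for $0 < t \le 1$. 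For $t \ge 1$ one argues even more crudely: $|W_t^\alpha(f)(x)| \le \|f\|_\infty \int |\phi_{\alpha,t^{1/\alpha}}| = \|\phi_\alpha\|_{L^1}$ and $|1| = 1$, so $|W_t^\alpha(f)(x) - 1| \le \|\phi_\alpha\|_{L^1} + 1 \le (\|\phi_\alpha\|_{L^1}+1)\,t$. Enlarging $C$ to absorb both regimes yields the claim for all $t > 0$.

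The only genuine obstacle is pinning down the pointwise decay $|\phi_\alpha(y)| \lesssim (1+|y|)^{-\ell-\alpha}$ with a clean reference; everything else is elementary. If one prefers to avoid invoking this estimate directly, an alternative is to write $W_t^\alpha = \int_0^\infty \eta_{\alpha/2,t}(s) W_s\,ds$ via subordination, use the Gaussian bound $0 \le W_s(f)(x) - 1 \lesssim$ (tail of a Gaussian outside the cube) together with the moment bound $\int_0^\infty s\,\eta_{\alpha/2,1}(s)\,ds < \infty$ — but the subordinand's moments of order $\ge \alpha/2$ diverge, so this route requires more care and the direct kernel estimate is cleaner. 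I would therefore present the argument via the convolution kernel and the bound $|\phi_\alpha(y)|\lesssim(1+|y|)^{-\ell-\alpha}$, citing \cite{BDQ} (or the standard theory of stable densities) for the decay.
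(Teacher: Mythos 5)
Your argument is correct and essentially the same as the paper's: both reduce the problem via the convolution representation and $\int\phi_{\alpha,t^{1/\alpha}}=1$ to bounding the mass of the kernel outside a unit cube/ball, then invoke the standard decay $|\phi_\alpha(y)|\lesssim (1+|y|)^{-\ell-\alpha}$ (equivalently, the scaled bound $|\phi_{\alpha,s}(z)|\lesssim s^{-\ell}(s/(s+|z|))^{\ell+\alpha}$ cited from \cite[(2.13)]{BDM}) to get the factor $t$. The paper obtains the estimate uniformly in $t>0$ in one stroke by integrating in polar coordinates, whereas your split into $t\le 1$ and $t\ge 1$ is a harmless bookkeeping variant of the same computation.
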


\begin{proof}Since $\frac{1}{(2\pi)^{\ell/2}}\int_{\RR^\ell} \phi_{\alpha,t}(z)dz=1$, for every $t>0$, we can write
	\[W_t^\alpha(f)(x)-1=\frac{1}{(2\pi)^{\ell/2}}\int_{\RR^\ell} \left(\prod_{j=1}\chi_{[-1,2]}(z_j)-1\right)\phi_{\alpha,t^{1/\alpha}}(x-z)dz, \quad x\in \RR^d.\]
	
	According to \cite[(2.13)]{BDM},
	\[|\phi_{\alpha,t}(z)|\lesssim \frac{1}{t^\ell} \left(\frac{t}{t+|z|}\right)^{\ell+\alpha}, \quad z\in \RR^\ell, t>0.\]
	
	Then, for every $x\in [0,1]^\ell$ and $t>0$, we get
	\begin{align*}
		|W_t^\alpha(f)(x)-1|&\lesssim \int_{\RR^\ell} \left|\prod_{j=1}\chi_{[-1,2]}(z_j)-1\right|\left|\phi_{\alpha,t^{1/\alpha}}(x-z)\right|dz\\
		&\lesssim \int_{\RR^\ell\setminus [-1,2]^\ell} \frac{1}{t^{\ell/\alpha}}\left(\frac{t^{1/\alpha}}{t^{1/\alpha}+|z-x|}\right)^{\ell+\alpha}dz\\
		&\lesssim \frac{1}{t^{\ell/\alpha}} \int_{\RR^\ell\setminus B(x,1)} \left(\frac{t^{1/\alpha}}{t^{1/\alpha}+|z-x|}\right)^{\ell+\alpha}dz\\
		&\lesssim \frac{1}{t^{\ell/\alpha}} \int_1^\infty \left(\frac{t^{1/\alpha}}{t^{1/\alpha}+r}\right)^{\ell+\alpha}r^{\ell-1} dr\\
		&\lesssim \frac{1}{t^{\ell/\alpha}} \int_1^\infty \frac{t^{\ell/\alpha+1}}{r^{\ell+\alpha}} dr\lesssim t, \quad t>0.\qedhere
	\end{align*}
\end{proof}

We now define, for every $\ell\in \NN$ and $g\in L^2(\RR)$,
\[A_\ell^\alpha(g)=W_{2^{-\alpha\ell}}^\alpha(g), \quad D_\ell(g)(x)=2^{\ell-1}\int_{x-2^{-\ell}}^{x+2^{-\ell}} g(y) dy, \quad x\in \RR.\]

As in \cite[p.~21]{CCSj} (see also \cite[\S2]{JW}), we define the relation $\sim_2$ between sequences of operator in $L^2(\RR)$ as follows. Suppose that $\{P_\ell\}_{\ell\in \NN}$ and $\{Q_\ell\}_{\ell\in \NN}$ are two sequences of bounded operators in $L^2(\RR)$. We say that $\{P_\ell\}_{\ell\in \NN}\sim_2\{Q_\ell\}_{\ell\in \NN}$ when, for every sequence of numbers $\{\nu_m\}_{m\in \NN}$ such that $|\nu_m|\leq 1$ for every $m\in \NN$, and every $g\in L^2(\RR)$,
\[\left\|\sum_{\ell\in\NN}\left(P_\ell(g)-Q_\ell(g)\right)\nu_\ell\right\|_{L^2(\RR)}\lesssim \|g\|_{L^2(\RR)}.\]

Suppose that $I\subset (0,\infty)$ and $\{T_t\}_{t\in I}$ is a family operators in $L^p(\RR^d)$ for some $1\le p<\infty$. We define the variation operator $V_{2,I}(\{T_t\}_{t\in I})$ by
\[
V_{2,I}(\{T_t\}_{t\in I})(f)(x)=\sup_{\substack{ t_k<t_{k-1}<\dots<t_1 \\ t_k\in I}}\left(\sum_{j=1}^{k-1}|T_{t_{j+1}}(f)(x)-T_{t_j}(f)(x)|^2\right)^{1/2}.\]
According to \cite[Theorem~2.5]{JW}, if $\{P_\ell\}_{\ell\in \NN}\sim_2\{Q_\ell\}_{\ell\in \NN}$, then, for every $g\in L^2(\RR)$,
\[\left\|\left(\sum_{\ell\in\NN}\left|P_\ell(g)-Q_\ell(g)\right|^2\right)^{1/2}\right\|_{L^2(\RR)}\lesssim \|g\|_{L^2(\RR)},\]
which implies 
\[\left\|V_{2,\NN}\left(\{P_\ell-Q_\ell\}_{\ell\in \NN}\right)(g)\right\|_{L^2(\RR)}\lesssim \|g\|_{L^2(\RR)}.\].

\begin{lem}\label{lem: equivalence}
	Let $\alpha\in (0,2)$. We have that $\{A_\ell^\alpha\}_{\ell\in \NN}\sim_2 \{D_\ell\}_{\ell\in \NN}$.
\end{lem}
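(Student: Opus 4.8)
The plan is to show the equivalence $\{A_\ell^\alpha\}_{\ell\in\NN}\sim_2\{D_\ell\}_{\ell\in\NN}$ by splitting each operator into a ``main frequency block'' plus an error, following the strategy of \cite[\S2]{JW} and \cite{CCSj}, but adapting it to the fact that $W_t^\alpha$ is given by a Fourier multiplier $e^{-t|z|^\alpha}$ rather than a Gaussian. Concretely, $A_\ell^\alpha$ is the Fourier multiplier $m_\ell^\alpha(\xi)=e^{-2^{-\alpha\ell}|\xi|^\alpha}=e^{-|2^{-\ell}\xi|^\alpha}$, and $D_\ell$ is the Fourier multiplier $d_\ell(\xi)=\frac{\sin(2^{-\ell}\xi)}{2^{-\ell}\xi}$. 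Both symbols are functions of $2^{-\ell}\xi$ only, are close to $1$ near $\xi=0$ (more precisely $|m_\ell^\alpha(\xi)-1|\lesssim|2^{-\ell}\xi|^\alpha$ and $|d_\ell(\xi)-1|\lesssim|2^{-\ell}\xi|^2$ for $|2^{-\ell}\xi|\le 1$), and decay at infinity. So the heuristic is that $A_\ell^\alpha$ and $D_\ell$ both behave like a smooth projection onto frequencies $\lesssim 2^\ell$, hence their difference should be controlled by a Littlewood--Paley / almost-orthogonality argument.

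The key steps, in order, would be: (1) Fix a smooth Littlewood--Paley partition and write $A_\ell^\alpha - D_\ell$ as a multiplier $\mu_\ell(\xi)=m_\ell^\alpha(\xi)-d_\ell(\xi)$; note $\mu_\ell(\xi)=\mu_0(2^{-\ell}\xi)$ is a fixed dilated symbol with $\mu_0$ bounded, $\mu_0(\xi)=O(|\xi|^{\min(\alpha,2)})=O(|\xi|^\alpha)$ as $\xi\to 0$ (since $\alpha<2$), and $\mu_0(\xi)\to 0$ as $|\xi|\to\infty$ with enough decay (in fact $d_\ell$ contributes only $O(|\xi|^{-1})$ decay, but the relevant estimates only need $\mu_0\in L^2$ plus the vanishing-at-origin bound, which is the classical setup). (2) For a bounded sequence $\{\nu_\ell\}$, estimate $\big\|\sum_\ell \nu_\ell(A_\ell^\alpha-D_\ell)g\big\|_{L^2}^2 = \big\|\sum_\ell\nu_\ell\mu_0(2^{-\ell}\cdot)\hat g\big\|_{L^2}^2$ by Plancherel; the sum $\sum_\ell|\mu_0(2^{-\ell}\xi)|$ is uniformly bounded on $\RR\setminus\{0\}$ because of the power decay at both ends (geometric series in the dyadic scales), which gives the desired bound $\lesssim\|g\|_{L^2}^2$ after a Cauchy--Schwarz/Schur-type argument. (3) Invoke \cite[Theorem~2.5]{JW} (already recalled in the excerpt) to pass from the $\sim_2$ relation to the $V_{2,\NN}$ bound that will later be used. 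Steps (2)--(3) are essentially bookkeeping once (1) is set up; the content is entirely in the pointwise multiplier estimates for $\mu_0$.

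An alternative, possibly cleaner, route is to compare both families to the heat semigroup used in \cite{CCSj}: one already knows (or can cite) that $\{W_{2^{-2\ell}}\}_\ell\sim_2\{D_\ell\}_\ell$ in the Gaussian case, so it suffices to prove $\{A_\ell^\alpha\}_\ell\sim_2\{W_{2^{-2\ell}}\}_\ell$, i.e. that the difference of the multipliers $e^{-|2^{-\ell}\xi|^\alpha}-e^{-|2^{-\ell}\xi|^2}$ enjoys the same dilated-symbol estimates. Either way, the subordination formula $W_t^\alpha(f)=\int_0^\infty\eta_{\alpha/2,t}(s)W_s(f)\,ds$ is what makes this ``more demanding'' than the Gaussian case mentioned in the paper: one may instead want to run the whole argument on the subordinator side, writing $A_\ell^\alpha - D_\ell=\int_0^\infty\eta_{\alpha/2,2^{-\alpha\ell}}(s)(W_s-D_\ell)\,ds$ and using the known Gaussian comparison together with scaling properties and the $L^1$-normalization $\int_0^\infty\eta_{\alpha/2,t}(s)\,ds=1$.

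The main obstacle I anticipate is controlling the low-frequency/large-$s$ part of the subordinated integral: for the heat kernel $W_s$ with $s\gg 2^{-\alpha\ell}$ the comparison with $D_\ell$ degrades, and one must exploit the decay of $\eta_{\alpha/2,t}(s)$ as $s\to\infty$ (and its concentration near $s\sim t$) quantitatively, together with the precise $O(|2^{-\ell}\xi|^\alpha)$ vanishing at the origin of $\mu_0$ rather than the $O(|2^{-\ell}\xi|^2)$ one has in the Gaussian case. In the pure-multiplier approach this same difficulty reappears as the need to verify that $\sum_\ell|\mu_0(2^{-\ell}\xi)|$ is bounded \emph{uniformly} in $\xi$, which hinges on having genuine power decay of $\mu_0$ at $0$ and at $\infty$; the $\alpha\in(1,2)$ (rather than $\alpha=2$) is exactly what one has to track carefully here, but it causes no real trouble since $\alpha>0$ suffices for summability.
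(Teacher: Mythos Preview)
Your main approach (steps (1)--(2)) is correct and is essentially the paper's own proof: the paper records exactly the two multiplier bounds $|1-\widehat{\phi_{\alpha,2^{-\ell}}}(y)|\lesssim(2^{-\ell}|y|)^\alpha$ for $2^{-\ell}|y|\le 1$ and $|\widehat{\phi_{\alpha,2^{-\ell}}}(y)|\lesssim(2^{-\ell}|y|)^{-1}$ for $2^{-\ell}|y|\ge 1$, and then defers to \cite[Lemma~8.4]{CCSj} for the summation step you spell out. Your alternative routes (comparison with the Gaussian semigroup, or running everything through the subordinator) are not needed and the paper does not use them; the direct multiplier argument already works for all $\alpha\in(0,2)$, as you yourself note in the last line.
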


\begin{proof}For every $\ell \in \NN$  recall that
	\[\widehat{\phi_{\alpha,2^{-\ell}}}(y)=e^{-\left(2^{-\ell}|y|\right)^\alpha}, \quad y\in \RR.\]
Then, 
\[|1-\widehat{\phi_{\alpha,2^{-\ell}}}(y)|\lesssim \left(2^{-\ell}|y|\right)^\alpha, \quad 2^{-\ell}|y|\leq 1,\]
and
\[|\widehat{\phi_{\alpha,2^{-\ell}}}(y)|\lesssim |2^{-\ell}y|^{-1}, \quad 2^{-\ell}|y|\geq 1.\]

The proof can be finished as the corresponding one in \cite[Lemma~8.4]{CCSj}.
\end{proof}

We can argue as in the proof of \cite[Proposition~8.3]{CCSj} by using Lemma~\ref{lem: equivalence} above, to obtain the following property.

\begin{lem}\label{lem: lim measure 1}
	For some $C>0$, we have that
	\[\left|\left\{x\in (0,1): V_{2,I_N}(\{A_\ell^\alpha\}_{\ell\in \NN})(g_{1,N})(x)>C\sqrt{N\log(\log N)}\right\}\right|\underset{N\to \infty}{\longrightarrow} 1.\]
\end{lem}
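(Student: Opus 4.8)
The strategy is to transfer the problem from the semigroup averages $A_\ell^\alpha=W_{2^{-\alpha\ell}}^\alpha$ to the elementary dyadic averages $D_\ell$, and then to recognize inside $(0,1)$ a simple random walk whose $2$-variation over a window of $N$ indices is of order $\sqrt{N\log(\log N)}$ by the law of the iterated logarithm.

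First I would reduce to the model operators. By Lemma~\ref{lem: equivalence} we have $\{A_\ell^\alpha\}_{\ell\in\NN}\sim_2\{D_\ell\}_{\ell\in\NN}$, so \cite[Theorem~2.5]{JW} yields $\|V_{2,\NN}(\{A_\ell^\alpha-D_\ell\}_{\ell\in\NN})(g)\|_{L^2(\RR)}\lesssim\|g\|_{L^2(\RR)}$ for every $g\in L^2(\RR)$. Taking $g=g_{1,N}$ and using \eqref{eq: cota norma p} we get $\|V_{2,\NN}(\{A_\ell^\alpha-D_\ell\})(g_{1,N})\|_{L^2(\RR)}\lesssim\sqrt N$, so Chebyshev's inequality gives, for every $\varepsilon>0$,
\[
\left|\left\{x\in(0,1): V_{2,\NN}(\{A_\ell^\alpha-D_\ell\})(g_{1,N})(x)>\varepsilon\sqrt{N\log(\log N)}\right\}\right|\lesssim \frac{1}{\varepsilon^{2}\log(\log N)}\underset{N\to\infty}{\longrightarrow}0.
\]
Since $V_{2,I_N}$ satisfies the triangle inequality and $V_{2,I_N}\le V_{2,\NN}$, we have the pointwise bound $V_{2,I_N}(\{A_\ell^\alpha\})(g_{1,N})\ge V_{2,I_N}(\{D_\ell\})(g_{1,N})-V_{2,\NN}(\{A_\ell^\alpha-D_\ell\})(g_{1,N})$, so it suffices to show that, for some $C>0$, $|\{x\in(0,1): V_{2,I_N}(\{D_\ell\})(g_{1,N})(x)>C\sqrt{N\log(\log N)}\}|\to 1$ as $N\to\infty$.

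Next I would pass to a random walk. On $(0,1)$ one has $q_k=r_k$, hence $g_{1,N}=\sum_{k\in I_N}r_k$ there. A direct computation of the centered averages of Rademacher functions shows that $D_\ell(r_k)(x)=r_k(x)$ when $\ell>k$ and $[x-2^{-\ell},x+2^{-\ell}]$ avoids the level-$k$ dyadic points, that $|D_\ell(r_k)(x)|\lesssim 2^{\ell-k}$ when $\ell<k$, and that the diagonal term $D_\ell(r_\ell)(x)$ (an average over one full period) vanishes. Writing $S_\ell(x)=\sum_{k=2N+1}^{\ell}r_k(x)$ for $\ell\in I_N$, this gives $D_\ell(g_{1,N})(x)=S_\ell(x)+E_\ell(x)$ with $\int_0^1\sum_{\ell\in I_N}|E_\ell(x)|^2\,dx\lesssim N$ (a second-moment count of the number of "bad" levels $k$ for each $\ell$), so by Chebyshev, off a set of measure $o(1)$ one has $\sum_{\ell\in I_N}|E_\ell(x)|^2\lesssim N\omega_N$ with $\omega_N\to\infty$ as slowly as we wish; since any partition uses each index at most twice, $V_{2,I_N}(\{E_\ell\})(x)^2\le 4\sum_{\ell\in I_N}|E_\ell(x)|^2=o(N\log(\log N))$ there. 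Using the triangle inequality for $V_{2,I_N}$ once more, the claim reduces to proving that the simple random walk $\{S_\ell\}_{\ell\in I_N}$ (the $r_k$ being independent symmetric signs on $((0,1),dx)$) satisfies $|\{x\in(0,1): V_{2,I_N}(\{S_\ell\})(x)>2C\sqrt{N\log(\log N)}\}|\to 1$.

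This last statement is the substance of the argument and is precisely what is carried out in \cite[Proposition~8.3]{CCSj}: it is a law-of-the-iterated-logarithm estimate for the quadratic variation of a length-$N$ random walk — equivalently, Taylor's exact $2$-variation bound for Brownian paths read off through Donsker's invariance principle — which produces the extra factor $\sqrt{\log(\log N)}$ beyond the trivial bound $\sqrt N$. I expect this to be the main obstacle: capturing the $\sqrt{\log(\log N)}$ gain forces the partition to be chosen adaptively to the path so as to resolve the oscillations at the iterated-logarithm time-scales, and this selection must be run over many nested blocks so that the relevant event has probability tending to $1$; the cheap $\sqrt N$ lower bound coming from the uniform partition would not be enough to conclude Theorem~\ref{thm: necessity rho>2}.
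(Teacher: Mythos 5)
Your proposal is correct and takes essentially the same route as the paper, whose proof is the single line ``argue as in \cite[Proposition~8.3]{CCSj} by using Lemma~\ref{lem: equivalence}.'' The steps you supply -- the $\sim_2$ reduction from $\{A_\ell^\alpha\}$ to $\{D_\ell\}$ via Chebyshev, the passage from $D_\ell(g_{1,N})$ to the lacunary Rademacher random walk $S_\ell$, and the law-of-the-iterated-logarithm lower bound for the $2$-variation of that walk -- are exactly what the cited \cite[Proposition~8.3]{CCSj} carries out, so your write-up is a faithful expansion of the paper's one-sentence proof rather than a different argument.
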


We are now going to prove Theorem~\ref{thm: necessity rho>2}. With the definitions given above, let $g_N=g_{1,N}\prod_{i=2}^d g_i$, for $N\in \NN$ large enough. By the subordination formula,
\[W_t^\alpha(g_N)(x)=\int_0^\infty \mu_t^\alpha (s)W_s(g_N)(x)ds, \quad x\in \RR^d.\]

We decompose $W_s(g_N)$, $s>0$, in the following way
\begin{align}\label{eq: W-P}
	\nonumber W_s(g_N)(x)&=W_s(g_{1,N})(x_1)\prod_{i=2}^d W_s(g_i)(x_i)\\
	&\nonumber =W_s(g_{1,N})(x_1)-W_s(g_{1,N})(x_1)\left(1-\prod_{i=2}^d W_s(g_i)(x_i)\right)\\
    &:=W_s(g_{1,N})(x_1)-P_s(g_N)(x), \quad x=(x_1, \dots, x_d)\in [0,1]^d.
\end{align}

By \eqref{eq: cota puntual g1N}, $|W_s(g_{1,N})(x)|\lesssim N$, for $x\in \RR$ and $s>0$.
We deduce that
\begin{align*}
   \left|\int_0^\infty \mu_t^\alpha (s)P_s(g_N)ds\right|
   &\leq \int_0^\infty \mu_t^\alpha (s)|W_s(g_{1,N})(x_1)| \left(1-\prod_{i=2}^d W_s(g_i)(x_i)\right)ds\\
 	&\lesssim N \int_0^\infty \mu_t^\alpha (s)\left(1-\prod_{i=2}^d W_s(g_i)(x_i)\right)ds\\
 	&\sim N \left(1- W_t^\alpha\left(\prod_{i=2}^d g_i\right)(\bar{x})\right), \quad \bar{x}=(x_2, \dots, x_d), \quad (x_1,\bar{x})\in [0,1]^d.
\end{align*}
By using Lemma~\ref{lem: bound W-1} with $\ell=d-1$
\[\left|\int_0^\infty \mu_t^\alpha (s)P_s(g_N)ds\right|\lesssim Nt, \quad x=(x_1,\dots, x_d)\in [0,1]^d, t>0.\]
Hence, for every $t=2^{-\alpha\ell}$ with $\ell\in I_N$, 
\[\left|\int_0^\infty \mu_{2^{-\alpha\ell}}^\alpha (s)P_s(g_N)ds\right|\lesssim N2^{-\alpha N}\lesssim 2^{-\alpha N/2}, \; x=(x_1,\dots, x_d)\in [0,1]^d.\]

Since $I_N$ has $N$ elements, by \cite[(3.3)]{CCSj} we have 
\begin{equation}\label{eq: var decay}
V_{2,I_N}\left(\left\{\int_0^\infty \mu_{2^{-\alpha t}}^\alpha (s)P_s(g_N)ds\right\}_{t\in I_N}\right)\leq C_0 e^{-\alpha N/2}\sqrt{N},
\end{equation}
for $x=(x_1,\dots, x_d)\in [0,1]^d$ and some constant $C_0>0$.

On the other hand, 
\[W_t^\alpha(g_{1,N})(x_1)=\int_0^\infty \mu_t^\alpha(s) W_s(g_{1,N})(x_1) ds, \quad x_1\in \RR,\]
so, for some constant $C_1>0$, by Lemma~\ref{lem: lim measure 1} we know that
\begin{equation}\label{eq: lim measure g1}
		\left|\left\{x_1\in [0,1]: V_{2,I_N}(\{W_{2^{-\alpha\ell}}^\alpha(g_{1,N})\}_{\ell\in I_N})(x_1)>C_1\sqrt{N\log(\log N)}\right\}\right|\underset{N\to \infty}{\longrightarrow} 1.
\end{equation}

From \eqref{eq: W-P} we have
\begin{align*}
    	&\left|\left\{x_1\in [0,1]: V_{2,I_N}(\{W_{2^{-\alpha\ell}}^\alpha(g_{1,N})\}_{\ell\in I_N})(x_1)>C_1\sqrt{N\log(\log N)}\right\}\right| \\
	&\quad = \left|\left\{x\in [0,1]^d: V_{2,I_N}(\{W_{2^{-\alpha\ell}}^\alpha(g_{1,N})\}_{\ell\in I_N})(x_1)>C_1\sqrt{N\log(\log N)}\right\}\right|\\
	&\quad \leq \left|\left\{x\in [0,1]^d: V_{2,I_N}(\{W_{2^{-\alpha\ell}}^\alpha(g_N)\}_{\ell\in I_N})(x)>\frac{C_1}{2}\sqrt{N\log(\log N)}\right\}\right|\\
	&\qquad + \left|\left\{x\in [0,1]^d: V_{2,I_N}\left(\left\{\int_0^\infty \mu_{2^{-\alpha \ell}}^\alpha (s)P_s(g_N)ds\right\}_{\ell\in I_N}\right)>\frac {C_1}{2}\sqrt{N\log(\log N)}\right\}\right|.
\end{align*}
Choosing $N$ large enough such that $C_0e^{-\alpha N/2}\leq \frac {C_1}{2}\sqrt{\log(\log(N))}$, the second term is zero from \eqref{eq: var decay}. Therefore,
\begin{align*}
    &\left|\left\{x_1\in [0,1]: V_{2,I_N}(\{W_{2^{-\alpha\ell}}^\alpha(g_{1,N})\}_{\ell\in I_N})(x_1)>C_1\sqrt{N\log(\log N)}\right\}\right|\\&\leq \left|\left\{x\in [0,1]^d: V_{2,I_N}(\{W_{2^{-\alpha\ell}}^\alpha(g_N)\}_{\ell\in I_N})(x)>\frac{C_1}{2}\sqrt{N\log(\log N)}\right\}\right|\leq 1,
\end{align*}
and from \eqref{eq: lim measure g1}, we deduce that
\begin{equation}\label{eq: lim measure gN}
	\left|\left\{x\in [0,1]^d: V_{2,I_N}(\{W_{2^{-\alpha\ell}}^\alpha(g_N)\}_{\ell\in I_N})(x)>\frac{C_1}{2}\sqrt{N\log(\log N)}\right\}\right|\underset{N\to \infty}{\longrightarrow} 1.
\end{equation}

For any $N\in \NN$, we clearly have
\begin{align*}
	&\left|\left\{x\in [0,1]^d: V_{2,I_N}(\{W_{2^{-\alpha\ell}}^\alpha(g_N)\}_{\ell\in \NN})(x)>\frac{C_1}
    {2}\sqrt{N\log(\log N)}\right\}\right|\\
	&\leq \left|\left\{x\in \RR^d: V_2(\{W_t^\alpha(g_N)\}_{t>0})(x)>\frac{C_1}{2}\sqrt{N\log(\log N)}\right\}\right|.
\end{align*}
 Then, if $1< p<\infty$ and we suppose that $V_2(\{W_t^\alpha\}_{t>0})$ is bounded from  $L^p(\RR^d)$ to $L^{p,\infty}(\RR^d)$, there exists $M>0$ such that
\[\left|\left\{x\in \RR^d: V_2(\{W_t^\alpha(g_N)\}_{t>0})(x)>\frac{C_1}{2}\sqrt{N\log(\log N)}\right\}\right|\leq M \left(\frac{\|g_N\|_{L^p(\RR^d)}}{\sqrt{N\log(\log N)}}\right)^p\]
for every $N\in \NN$.

But, from \eqref{eq: cota norma p},
$\|g_N\|_{L^p(\RR^d)}\lesssim \sqrt{N}$ so
\[\left|\left\{x\in \RR^d: V_2(\{W_t^\alpha(g_N)\}_{t>0})(x)>\frac{C_1}{2}\sqrt{N\log(\log N)}\right\}\right|\underset{N\to \infty}{\longrightarrow} 0\]
which contradicts \eqref{eq: lim measure gN}. Therefore,  $V_2(\{W_t^\alpha\}_{t>0})$ is not bounded from  $L^p(\RR^d)$ to $L^{p,\infty}(\RR^d)$ for any $1< p<\infty$.

Since, as it was proved in Section~\ref{sec: difference}, the operator $V_2(\{T_t^\alpha-W_t^\alpha\}_{t>0})$ is bounded in $L^p(\RR^d)$ for every $1\vee \frac{d}{\beta}<p<\infty$, we conclude that $V_2(\{T_t^\alpha\}_{t>0})$ is not bounded from $L^p(\RR^d)$ into $L^{p,\infty}(\RR^d)$ when $1\vee \frac{d}{\beta}<p<\infty$.

\section*{Acknowledgments}

\subsection*{Funding} The authors are partially supported by grant PID2023-148028NB-I00 from the Spanish Government. The second author is also partially supported by grants PICT-2019-2019-00389 (Agencia Nacional de Promoción Científica y Tecnológica), PIP-1220200101916O (Consejo Nacional de Investigaciones Científicas y Técnicas) and CAI+D 2019-015 (Universidad Nacional del Litoral).

\bibliographystyle{acm}

\end{document}